\documentclass[10pt]{article}

\usepackage{graphics}
\usepackage{graphicx}

\usepackage[a4paper, left=35mm,right=35mm,top=34mm,bottom=34mm]{geometry}
\usepackage[utf8]{inputenc}
\usepackage[T1]{fontenc}
\usepackage[english]{babel}

\usepackage{enumerate}
\usepackage{graphicx}
\usepackage{hyperref}
\hypersetup{
    colorlinks=true,
    linkcolor=blue,
    filecolor=magenta,      
    urlcolor=cyan,
}
\usepackage{lipsum}
\usepackage{amsfonts}
\usepackage{graphicx}
\usepackage{epstopdf}
\usepackage{algorithmic}

 \usepackage{dsfont}
 \usepackage{psfrag}
 \usepackage{color}
 \usepackage{tikz}
 \usepackage{subfigure}
\usepackage{mathtools,amsthm,amssymb,amsmath}
\usepackage{bbm}
\usepackage{algorithm}
\usepackage{algorithmic}
\makeatother
\theoremstyle{plain}
\def\del  {\partial}
\def\eps{\varepsilon}

\def\R{\mathbb{R}}

\def\N{\mathbb{N}}

 \def\dx{{\rm d}x}
 \def\dy{{\rm d}y}

\newtheorem{remark}{\textbf{Remark}}
\newtheorem{theorem}{\textbf{Theorem}}
\newtheorem{lemma}{\textbf{Lemma}}
\newtheorem{definition}{\textbf{Definition}}
\newtheorem{proposition}{\textbf{Proposition}}
\newtheorem{corollary}{\textbf{Corollary}}



\usepackage{caption} 
\captionsetup[table]{skip=5pt}

\usepackage{fancyhdr}

\author{
  {\normalsize Adrien Dekkers}\thanks{CentraleSup\'elec, Universit\'e Paris-Saclay, France.}
  \and
  {\normalsize Anna Rozanova-Pierrat}\thanks{CentraleSup\'elec, Universit\'e Paris-Saclay, France
    (correspondence, anna.rozanova-pierrat@centralesupelec.fr).}
  		}
\title{Dirichlet boundary valued problems for linear and nonlinear wave equations on arbitrary and fractal domains}
\date{}

\begin{document}
\maketitle
\thispagestyle{fancy}

\begin{abstract}
\noindent   The weak well-posedness results of the strongly damped linear wave equation and of the non linear Westervelt equation with homogeneous Dirichlet boundary conditions  are proved on arbitrary three dimensional domains or any two dimensional domains which can be obtained by a limit of NTA domains caractarized by the same geometrical constants. The two dimensional result is obtained thanks to the Mosco convergence of the functionals corresponding to the weak formulations for the Westervelt equation with the homogeneous Dirichlet boundary condition.  The non homogeneous Dirichlet condition is also treated in the class of admissible domains composed on Sobolev extension domains of $\R^n$ with a $d$-set boundary $n-1\le d<n$ preserving Markov's local inequality.
The obtained Mosco convergence also alows  to approximate the solution of the Westervelt equation on an arbitrary domain by  solutions on a converging sequence of domains without additional conditions on their boundary regularity in $\R^3$, or on a converging sequence of NTA domains in $\R^2$. 
\end{abstract}

\begin{keywords}
 Strongly damped wave equation; Westervelt equation; Mosco convergence; $d$-set, Fractals.
\end{keywords}

%

\section{Introduction}
The influence of the boundary regularity on a wave propagation is an important question, which is open for the wave interaction with an irregular and even fractal boundary. In this article, starting by discussing the linear models, as in particular the Poisson stationary equation
$$-\Delta u=f$$
and the strong damping wave equation
$$\partial^2_t u-c^2\Delta u-\eps\nu  \Delta\partial_t u=f, \quad \eps,c,\nu>0,$$
we are mainly interesting on the nonlinear model of Westervelt equation
$$\partial^2_t u-c^2\Delta u-\eps\nu  \Delta\partial_t u=\eps\alpha u \partial^2_t u+\eps\alpha (\partial_t u)^2+f,$$
 known to be able to describe the ultrasound propagation~\cite{DEKKERS-2019,DEKKERS-2020-1,DEKKERS-2020,Westervelt}. The propagation of the ultrasounds in a human body in the help of the imagery  techniques give us a necessity to pose boundary valued problems with boundaries of different regularity, even fractal, as soon it is known that the cancer tumors have irregular forms in the opposite of the health tissues. In this framework of bounded domains, we ask the question about the most general class of domains where it is possible to have the weak solution of the linear and nonlinear models with firstly Dirichlet homogeneous boundary condition and secondly with the non homogeneous one. 

Let us notice that the Westervelt equation classically~\cite{AANONSEN-1984,DEKKERS-2020-1,Westervelt} is presented for the perturbation of the pressure of the wave by
\begin{equation}\label{EWest}
	\del_t^2 p-c^2\Delta p - \eps\nu  \Delta  \del_t p=\eps\frac{\gamma+1}{c^2} \del_t p \del_t^2 p \quad \gamma>0,
\end{equation}
but for our study of its weak well-posedness in the most possible large class of domains we have modified a little bit the form of the nonlinear term. To do it, it sufficient to derive Eq.~(\ref{EWest}) once on time and  pose $u=\del_t p$. 
The coefficients $c$, $\nu$ and $\gamma$ are the physical constants describing the speed of the sound in the homogeneous non perturbed medium, the viscosity coefficient and the relation of the the ratio of the heat capacities at constant pressure and at constant volume respectively. The constant $\eps$ describe the size of  perturbations and of the viscosity effects~\cite{ROZANOVA-PIERRAT-2015,DEKKERS-2020-1,DEKKERS-2019} and physically is very small  to compare to $1$ (of order $10^{-5}$).  We  notice that the Westervelt equation is the nonlinear wave equation with the strong damping term $\eps\nu\Delta \del_t u$, which actually changes~\cite{Shibata} the finite speed propagation of the linear wave equation to the infinite as for the heat type equations. Indeed, the linear part of the Westervelt equation can be viewed as two compositions of the heat operator $\del_t-\Delta$ in the following way:
$$\del_t^2 u-c^2\Delta u- \eps\nu  \Delta \del_t u=\del_t(\del_t u - \eps\nu \Delta u)-c^2 \Delta u.$$

Using the spectral properties of the Dirichlet Laplacian known for an arbitrary domain~\cite{EVANS-1994}, by the usual Galerkin method it follows the weak well-posedness of the Dirichlet homogeneous problem for the linear wave equation in an arbitrary bounded domain~\cite{EVANS-1994}. We develop this method to obtain the analogous result for the strongly damped wave equation (see Section~\ref{secwpdampwavdirhom} and Ref.~\cite{DEKKERS-2020} for the analogous result for the mixed boundary conditions). 

The well-posedness results  and the regularity of the solutions of  the linear strongly damped wave equation and the non-linear Westervelt equation on regular domains, typically with at least a $C^2$ boundary, is well known~\cite{Kalt3,Kalt2,Kalt1,Kaltwer,Meyer}. The advantage to work with a such regular boundaries is to enjoy the fact that more the initial data are regular more the solution is regular up to the boundary. In particular, the $C^1$ boundary regularity allows to work with the Sobolev space $H^2$ in the domain of Laplacian~\cite{EVANS-1994}, but it is no more possible in a non convex case of a Lipschitz boundary~\cite{Grisvard}, where we only have access to $H^1$.
 
In the framework of weak solutions on an arbitrary bounded domain $\Omega\subset \R^n$, the homogeneous Dirichlet boundary valued problem for the Poisson equation is understood in the following variational form
\begin{equation}\label{EqWeekDirPois}
\forall v\in H^1_0(\Omega)\quad \int_{\Omega} \nabla u\nabla v\dx=\int_\Omega fv \dx,	
\end{equation}
in which there is no  any boundary influence and to obtain the unique weak solution $u\in H^1_0(\Omega)$ it suffices to apply the Riesz representation theorem.
Moreover, thanks to Evans~\cite{EVANS-2010} Theorem~2 p.~304 and Theorem~3 p.~316 we have (even for solutions in $H^1(\Omega)$ and thus for different boundary conditions) the  interior regularity of the weak solution, \textit{i.e.}, the fact that for a subset $V$ compactly included in $\Omega$, $V\subset\subset\Omega$, the solution on $\Omega$ has on $V$ the same regularity as for a domain with regular boundaries.  For instance, if $f\in C^\infty(\Omega)$ then $u\in C^\infty(\Omega)\cap H^1_0(\Omega)$. 
So, for any domain $\Omega$ with a boundary as ``bad'' as we want, a fractal, a fractal tree, a domain with cusps and ctr., the weak solution of~(\ref{EqWeekDirPois}) is in $C^\infty(\Omega)$ for the same regularity of $f$. The key point here that $\Omega$ is open.

The question is whether on less regular domains we can have a weak solution which is continuous or $C^1$ up to the boundary. The examples of Arendt and Elst~\cite{ArendtElst} show that problems appear for the definition of the trace as soon as the boundary is not $C^1$. 
The property to be in $C(\overline{\Omega})$ is much more restrictive than just to be $C(\Omega)$, since  the continuity on a compact requires from $u$ to be  bounded and equicontinuous, and does not hold for arbitrary shapes of $\del \Omega$~\cite{EDMUNDS-1987}. By  results of Nystr\"om~\cite{Nystrom} the necessary condition for $\Omega$ is to be a non-tangentially  accessible domain (a NTA domain, see Definition~\ref{defNTA} and also Ref.~\cite{Jerison}).
In addition Nystr\"om~\cite{Nystrom} showed that for von Koch's snowflake $\Omega$ (which is actually an example of a NTA domain) the weak solution $u\in H^1_0(\Omega)\cap C^{\infty}(\Omega)$ of~\eqref{EqWeekDirPois} for all $f\in \mathcal{D}(\Omega)$  non negative and non identically zero is continuous up to the boundary $u\in C(\overline{\Omega})$, but $u\notin H^2(\Omega).$  
However it holds for the convex polygonal domains~\cite{Grisvard}. The convexity condition does not allow the incoming angles, which creates the singularities.

An other  important question is whether the solutions of the Poisson problem with the homogeneous Dirichlet boundary condition belong to $C(\Omega)\cap L^{\infty}(\Omega) $ (a  weaker condition than to be continuous up to the boundary) with an estimate of the form:
\begin{equation}\label{EqCREch2}
	\Vert u\Vert_{L^{\infty}(\Omega)}\leq C \Vert f\Vert_{L^2(\Omega)}.
\end{equation}
By Nystr\"om~\cite{Nystrom}   the  answer is positive in dimension $n=2$ in the class of the NTA domains. By Xie~\cite{Xie} it is also positive for the three dimensional case considering the solutions of~(\ref{EqWeekDirPois}) in arbitrary domains. In Section~\ref{secpoissdir} we precise the constant dependence in estimates obtained by Nystr\"om~\cite{Nystrom}, which is important for the uniform boundness of a sequence of solutions $(u_m)_{m\in \N^*}$ independently on the shape of $\Omega_m$. This kind of uniform estimates are crucial to obtain the Mosco convergence for functionals coming from the definition of the weak solutions of the Westervelt equation (see Section~\ref{secasymptan}). 
In Section~\ref{SecWPW} we study the weak well posedness of the Westervelt equation.
As the method used to prove the weak well posedness of the Westervelt equation with the homogeneous Dirichlet boundary condition is based on the properties of the linear problem and in particular uses estimate~\eqref{EqCREch2}, we also obtain it for an arbitrary domain in $\R^3$ and for a NTA domain in $\R^2$ by applying the abstract theorem of nonlinear functional analysis of Sukhinin~\cite{Sukhinin} (see Theorem~\ref{thSuh}).  This theorem was previously successively applied in different frameworks for the Westervelt equation~\cite{DEKKERS-2019,DEKKERS-2020,DEKKERS-2020-1}, for the heat~\cite{ROZANOVA-2004} and the abstract~\cite{ROZANOVA-2004-1,ROSANOVA-2005} nonlinear equations.  The interest of its application is to be able to give a sharp estimate of the smallness of the initial data and of the source term, and in the same time to estimate the bound of the corresponding solution of the Westervelt equation in the space of solutions for the linear problem (see Theorem~\ref{ThWPWestGlob}).  Hence, we prove the existence and uniqueness of a global in time weak solution of the Westervelt equation on an arbitrary domain in $\R^3$ and on a NTA domain in $\R^2$ with the homogeneous Dirichlet condition. 
Thanks to the Mosco convergence of functionals defining the weak solutions of the Westervelt equation, in Section~\ref{secasymptan} we improve this well posedness result in $\R^2$ showing that  it holds  on all arbitrary domains of $\R^2$ which can be considered as a limit of a sequence of NTA domains with uniform geometrical constants (see Definition~\ref{defNTA} and Theorem~\ref{thmconvR3}).

But if we consider the nonhomogeneous Dirichlet boundary condition, for example for the Poisson equation
\begin{equation}\label{PoissonDir1}
 \left\lbrace
 \begin{array}{l}
 -\Delta u=f \hbox{ in }\Omega\\
 u\vert_{\Omega}=g \hbox{ on }\partial\Omega,
 \end{array}
 \right.
 \end{equation}
the notions as the trace and extension operators become very important. By the standard schema, assuming that there exists $g^*\in H^1(\Omega)$ such that the trace of $g^*$ to $\partial\Omega$ is $g$ (attention must be paid to the definition of the trace), we may prove with the Riesz representation theorem that given $f\in L^2(\Omega)$, $g^*\in H^1(\Omega)$, there exits a unique $u\in H^1(\Omega)$ such that $-\Delta u=f$ in the sense of distributions and $u-g^*\in H^1_0(\Omega)$. In other words we need to define the trace of an element of $H^1(\Omega)$ on the boundary and  to ensure the existence of such an extension $g^*$ satisfying $u=g$ on $\del \Omega$.
This turns in two necessary assumptions:
\begin{enumerate}
	\item $\Omega$ is the $H^1$-Sobolev extension domain~\cite{HAJLASZ-2008-1}, $i.e.$ there exists a bounded linear extension operator $E: H^1(\Omega) \to H^1(\R^n)$. This means that for all $u\in H^1(\Omega)$ there exists a $v=Eu\in  H^1(\R^n)$ with $v|_\Omega=u$ and it holds
 $$\|v\|_{H^1(\R^n)}\le C\|u\|_{H^1(\Omega)}\quad \hbox{with a constant } C>0.$$
 \item there exists a bounded linear trace operator $\operatorname{Tr}: H^1(\R^n)\to \operatorname{Im(Tr}(H^1(\R^n)))\subset L^2(\del \Omega)$ with a linear bounded right inverse operator $\operatorname{Ext}:  \operatorname{Im(Tr}(H^1(\R^n)))\to H^1(\R^n)$ such that $\operatorname{Tr}(\operatorname{Ext} g)=g$ on $\del \Omega$.
\end{enumerate}
By the composition of these two assumptions we obtain  that the trace operator $\operatorname{Tr}: H^1(\Omega)\to \operatorname{Im(Tr}(H^1(\Omega)))\subset L^2(\del \Omega)$ is linear and bounded with a linear bounded right inverse. A domain satisfying these two assumptions gives an example of  %
an admissible domain~\cite{ARFI-2017} or  of a Sobolev admissible domain~\cite{ROZANOVA-PIERRAT-2020} (see Definition~\ref{defadmisdomain}).

For the well-posedness of the nonhomogeneous Dirichlet boundary valued problem for the Poisson equation  if $\partial\Omega$ is regular enough, see for example  Raviart-Thomas~\cite{Raviart}), for the Lipschitz boundary see  Marschall~\cite{Marschall}, for the case of bounded  $(\eps,\infty)$-domains with a $d$-set boundary ($n-2<d<n$) see Jonsson-Wallin~\cite{JONSSON-1997} and for more general admissible domains also requiring a $d$-set boundary see Arfi-Rozanova-Pierrat~\cite{ARFI-2017} (for the definitions see Section~\ref{spacesolexttrace}). But, if we modify the definition of the image of the trace operator following Ref.~\cite{JONSSON-1994}, the problem can be also solved for more general boundaries, not necessary of a fixed dimension in the generalized framework of Sobolev admissible domains of Rozanova-Pierrat~\cite{ROZANOVA-PIERRAT-2020} and which was used for the mixed boundary valued problem of the Westervelt equation in Ref.~\cite{DEKKERS-2020-1} (see Theorem~\ref{WeakSolPoiss}).   
We discuss these results in Section~\ref{secpoissdir} introducing the necessary functional framework as the generalization of the trace operator in the sense of special Besov spaces in Section~\ref{secfirstresult}.

Having this trace  generalization, we have the Green formula and integration by parts in the usual way  in the sense of a linear continuous forms, but this time not on $H^\frac{1}{2}(\del \Omega)$, but on the Besov space~\cite{Lancia1,BARDOS-2016,ARFI-2017,ROZANOVA-PIERRAT-2020}. In Section~\ref{secfirstresult} we also define the general framework of the admissible domains and the useful results from Ref.~\cite{ARFI-2017}. In $\R^2$ the admissible domains are the NTA domains with a $d$-set boundary $1\le d<2$ satisfying local Markov's inequality, but in $\R^3$ the class becomes more general. In Section~\ref{secpoissdir}, as was mentioned, we improve the results of Nystr\"{o}m~\cite{Nystrom} showing explicitly the constant dependence in the corresponding estimates (see Theorem~\ref{thmNystrom} and Corollary~\ref{linfPoissonR2}). %

In addition in Section~\ref{SecGenWrem} we notice that the analogous results as in Refs.~\cite{Kalt3,Kaltwer} 
on the well-posedness of the Westervelt equation developed for bounded domains with a regular $C^2$ boundary can be obtained on admissible domains with a $d$-set boundary satisfying the local Markov inequality. The key point is the proof of the necessary estimates given in Proposition~\ref{propestadmdom} which updates the analogous estimates of Refs.~\cite{Kalt3,Kaltwer} making possible their approach.  In the same way, thanks to Ref.~\cite{Grisvard} the results of well-posedness in Refs.~\cite{Kalt3,Kalt2,Kalt1,Kaltwer} found initially for a regular $C^2$ boundary can be extended without modifications   for convex polygonal domains in $\R^2$. %

Using results of Sections~\ref{secfirstresult} and~\ref{secpoissdir} we firstly prove the global in time weak well-posedness result for the strongly damped wave equation %
on arbitrary domains for the homogeneous Dirichlet boundary condition in Subsection~\ref{subsecweakwpdampwavedir} (using the Galerkin method and thanks to the Poincar\'e inequality) and then establish the maximal regularity result using the domain of the weak Dirichlet Laplacian in Subsection~\ref{subsecMaxDirH}. The analogous maximal regularity result on the admissible domains for the nonhomogeneous Dirichlet conditions is given in Subsection~\ref{subssecWPnHLin}. %
Basing on the properties for the linear model, we prove the global in time well-posedness of the Westervelt equation on arbitrary domains in $\R^3$ and on admissible domains in $\R^2$ for the homogeneous Dirichlet condition in Subsection~\ref{secwpWesdirhom}. The nonhomogeneous Dirichlet condition is treated in Subsection~\ref{secwpWesdirinhom} on admissible domains.
In Section~\ref{secasymptan} for the homogeneous Dirichlet boundary condition we improve the well posedness result for the Westervelt equation obtained on NTA two dimensional domains to the arbitrary domains which can be approximated by NTA domains with uniform geometrical constants $M$ and $r_0$ (see Definition~\ref{defNTA}). This result is related with the  
possibility of the approximation of the solution of the Westervelt equation on a fixed arbitrary domain $\Omega$ (in $\R^3$) or NTA domain in $\R^2$ 
by a sequence of solutions on a sequence of domains of the same type (arbitrary or NTA domains with uniform geometrical constants $M$ and $r_0$ (see Theorem~\ref{equivNTAepsdelt})).  For a sequence of such domains $(\Omega_m)_{m_\in \N^*}$ converging to $\Omega$ in sense of Definition~\ref{convomegmR3} we prove the Mosco convergence
of functionals coming from the weak formulations for the Westervelt problem (see Theorem~\ref{Mconv}). 
 The notion of the Mosco convergence ($M$-convergence) for functionals was initially introduced in Ref.~\cite{Mosco}. It implies the weak convergence of the uniformly  bounded (on $m$) sequence of the weak solutions of the Westervelt problems on $\Omega_m$ to the weak solution on $\Omega$ (see Theorem~\ref{thmconvR3}). This kind of arbitrary approximation or the approximation in the same class of domains is common to the shape optimization techniques~\cite{FEIREISL-2002-1,MAGOULES-2020}. We also notice, see for instance p.~113 in Ref.~\cite{HENROT-2005}, that $M$-convergence is related with $\gamma$-convergence.

\section{Main definitions and the functional framework.}%
\label{secfirstresult}
The question to be able to solve the Poisson equation on the most general domain is related with the developing of the extension theory and the definition of the trace on a subset of $\R^n$.

Firstly Calderon-Stein~\cite{CALDERON-1961,STEIN-1970}  states that every Lipschitz domain $\Omega$ is a Sobolev extension domain. Next result is due to Jones~\cite{JONES-1981}, which established that every  $(\eps,\infty)$-domain is a Sobolev extension domain and that this class of domains is optimal in $\R^2$: a simply connected plane domain is a Sobolev extension domain if and only if it is an $(\eps,\infty)$-domain. For  reader's convenience we give the definition of the $(\eps,\delta)$-domain:
\begin{definition}[$(\eps,\delta)$-domain~\cite{JONES-1981}]\label{DefEDD}
An open connected subset $\Omega$ of $\R^n$ is an $(\eps,\delta)$-domain, $\eps > 0$, $0 < \delta \leq \infty$, if whenever $(x, y) \in \Omega^2$ and $|x - y| < \delta$, there is a rectifiable arc $\gamma\subset \Omega$ with length $\ell(\gamma)$ joining $x$ to $y$ and satisfying
\begin{enumerate}
 \item $\ell(\gamma)\le \frac{|x-y|}{\eps}$ and
 \item $d(z,\del \Omega)\ge \eps |x-z|\frac{|y-z|}{|x-y|}$ for $z\in \gamma$. 
\end{enumerate}
\end{definition}
Next important definition for the Sobolev extension domains is the definition of a $d$-set:
\begin{definition}[Ahlfors $d$-regular set or $d$-set~\cite{JONSSON-1984,JONSSON-1995,WALLIN-1991,TRIEBEL-1997}]\label{defdset}
Let $F$ be a Borel non-empty subset of $\R^n$. The set $F$ is is called a $d$-set ($0<d\le n$) if there exists a $d$-measure  $\mu$ on $F$, $i.e.$ a positive Borel measure with support $F$ ($\operatorname{supp} \mu=F$) such that there exist constants 
$c_1$, $c_2>0$,
\begin{equation*}
 c_1r^d\le \mu(F\cap\overline{B_r(x)})\le c_2 r^d, \quad \hbox{ for  } ~ \forall~x\in F,\; 0<r\le 1,
 \end{equation*}
where $B_r(x)\subset \R^n$ denotes the Euclidean ball centered at $x$ and of radius~$r$.
\end{definition}
Thanks to Proposition~1, p.~30 from Ref.~\cite{JONSSON-1984} all $d$-measures on a fixed $d$-set $F$ are equivalent. Hence it is also possible to define a $d$-set by the $d$-dimensional Hausdorff measure $m_d$, which in particular implies that $F$ has Hausdorff dimension $d$ in the neighborhood of each point of $F$ (see p.~33 of Ref.~\cite{JONSSON-1984}).
Let us make attention on the following two examples
\begin{itemize}
\item In $\mathbb{R}^n$, Lipschitz domains and regular domains are $n-$sets with $(n-1)-$sets as boundaries.
\item In $\mathbb{R}^n$, the $(\varepsilon,\delta)$ domains are $n-$sets with a possibly fractal $d-$set boundary~\cite{JONSSON-1984}.
\end{itemize}

There is also a notion of the Non Tangentially Accessible (NTA) domains introduced in Ref.~\cite{Jerison}, which use the definition of Harnack chain:
\begin{definition}[Harnack chain~\cite{Jerison}]
An $M$ non-tangential ball in a domain $\Omega$ is a ball $B(A,r)$ in $\Omega$ whose distance from $\partial\Omega$ is comparable to its radius: 
$$Mr>d(B(A,r),\partial\Omega)>M^{-1}r.$$
For $P_1$, $P_2$ in $\Omega$, a Harnack chain from $P_1$ to $P_2$ in $\Omega$ is a sequence of $M$ non-tangential balls such that the first ball contains $P_1$, the last contains $P_2$, and such that consecutive balls have non empty intersections. 
\end{definition}

\begin{definition}[NTA domain~\cite{Jerison}]\label{defNTA}
A bounded domain $\Omega\subset\mathbb{R}^n$ is called NTA when there exists constants $M$ and $r_0$ such that:
\begin{enumerate}
\item Corkscrew condition: For any $Q \in \partial\Omega$, $r<r_0$, there exists $A=A_r(Q)\in \Omega$ such that $M^{-1}r<\vert A-Q\vert <r$ and $d(A,\partial\Omega)>M^{-1}r$.
\item $\mathbb{R}^n \setminus \overline{\Omega}$ satisfies the Corkscrew condition.
\item Harnack chain condition: If $\epsilon>0$ and $P_1$ and $P_2$ belongs to $\Omega$, $d(P_j;\partial\Omega)>\epsilon$ and $\vert P_1-P_2\vert<C \epsilon$, then there exists a Harnack chain from $P_1$ to $P_2$ whose length depends on $C$ and not on $\epsilon$.
\end{enumerate}
\end{definition}
The relation between the NTA and $(\eps,\delta)$-domains are given by the following theorem:
\begin{theorem}\label{equivNTAepsdelt}[\cite{Nystromal}]
If $\Omega$ is a bounded NTA domain characterized by $M$ and $r_0$, then $\Omega$ is an $(\varepsilon,\delta)$-domain with $\varepsilon$ and $\delta$ characterized by $M$ and $r_0$ only.
\end{theorem}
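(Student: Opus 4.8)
The plan is to join any two points $x,y\in\Omega$ with $|x-y|<\delta$ by a rectifiable arc assembled from three pieces: two short ``ascent'' arcs lifting $x$ and $y$ away from $\del\Omega$ to a height comparable to $|x-y|$, and a single Harnack chain joining the two lifted points. I would fix $\delta$ as a small fixed fraction of $r_0$ so that the corkscrew condition of Definition~\ref{defNTA} is available at every scale invoked below. Writing $d_x=d(x,\del\Omega)$ and $d_y=d(y,\del\Omega)$, I would assume $d_x\le d_y$ by symmetry; if already $d_x\ge\eps_0|x-y|$ for a threshold $\eps_0=\eps_0(M)$, both points sit at height $\gtrsim|x-y|$ and one jumps directly to the Harnack step, so the interesting case is $d_x<\eps_0|x-y|$.

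To build the ascent arc from $x$, choose $Q\in\del\Omega$ with $|x-Q|=d_x$ and apply the interior corkscrew at the dyadic scales $\rho_k=2^kd_x$, $k=0,\dots,N$, with $N$ the least index such that $2^Nd_x\gtrsim|x-y|$; the requirement $2^Nd_x<r_0$ holds once $|x-y|<\delta$. This yields corkscrew points $x_k=A_{\rho_k}(Q)$ with $M^{-1}\rho_k<|x_k-Q|<\rho_k$ and $d(x_k,\del\Omega)>M^{-1}\rho_k$. Consecutive points satisfy $|x_k-x_{k+1}|<3\rho_k$ while $\min(d(x_k,\del\Omega),d(x_{k+1},\del\Omega))>M^{-1}\rho_k$, so each pair is joinable by a Harnack chain whose number of balls depends only on $M$, each ball of radius $\lesssim\rho_k$. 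Concatenating produces a polygonal arc $\gamma_x$ from $x$ to a point $x'$ with $d(x',\del\Omega)\sim|x-y|$, on which every $z$ obeys $|x-z|\lesssim d(z,\del\Omega)$, both quantities being comparable to the scale $\rho_k$ at which $z$ lives near $Q$. The arc $\gamma_y$ is built identically, and is trivial when $d_y\gtrsim|x-y|$.

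For the top connection, $d(x',\del\Omega),d(y',\del\Omega)\gtrsim|x-y|$ and $|x'-y'|\le|x'-x|+|x-y|+|y-y'|\lesssim|x-y|$, so $|x'-y'|<C\min(d(x',\del\Omega),d(y',\del\Omega))$ with $C=C(M)$; the Harnack chain condition then gives a chain of $M$-non-tangential balls from $x'$ to $y'$ whose number depends only on $C$, hence only on $M$, and the polygonal path through their centers is an arc $\gamma_0$ with $d(z,\del\Omega)\gtrsim|x-y|$. Setting $\gamma=\gamma_x\cup\gamma_0\cup\gamma_y$, the length bound of Definition~\ref{DefEDD} follows since the radii $\rho_k$ form a geometric sequence with $\sum_k\rho_k\sim\rho_N\sim|x-y|$ and $\ell(\gamma_0)\lesssim|x-y|$, giving $\ell(\gamma)\le|x-y|/\eps$; the cigar inequality $d(z,\del\Omega)\ge\eps|x-z||y-z|/|x-y|$ holds because on $\gamma_x$ one has $|x-z|\lesssim d(z,\del\Omega)$ and $|y-z|\lesssim|x-y|$, on $\gamma_0$ both $|x-z|,|y-z|\lesssim|x-y|\lesssim d(z,\del\Omega)$, and $\gamma_y$ is symmetric.

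The \textbf{main obstacle} is the uniformity of the constants rather than any single geometric step. The number of corkscrew iterations $N\sim\log_2(|x-y|/d_x)$ is genuinely unbounded as $x$ approaches $\del\Omega$, so $\gamma$ cannot be controlled through the number of its pieces; what rescues the length bound is the geometric decay of the scales $\rho_k$, and what rescues the cigar condition is that at scale $\rho_k$ one simultaneously has $|x-z|\lesssim\rho_k$ and $d(z,\del\Omega)\gtrsim M^{-1}\rho_k$, so a single small $\eps=\eps(M)$ beats the bound at every scale at once. The care is therefore in checking that all the implicit comparison constants coming from the three steps can be absorbed into one pair $(\eps,\delta)$ depending only on $M$ and $r_0$ and on nothing about $x$, $y$ or the particular domain; this fixes $\delta$ as a fixed fraction of $r_0$ and $\eps$ as the minimum of the finitely many geometric constants produced.
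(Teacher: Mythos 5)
The paper offers no proof to compare against: Theorem~\ref{equivNTAepsdelt} is quoted from Ref.~\cite{Nystromal} as a known result, so your argument can only be measured against the standard proof in the cited literature --- and it is essentially that proof (a corkscrew ladder at dyadic scales glued by Harnack chains, with a direct Harnack chain at the top), and it is correct. One point you assert but do not verify, and which is the only place where the argument could silently fail, is the claim that the balls of a Harnack chain at scale $\rho_k$ have radii comparable to $\rho_k$ and that every point of the chain satisfies $d(z,\partial\Omega)\geq c(M)\rho_k$: the Harnack chain condition of Definition~\ref{defNTA} only bounds the \emph{number} of balls, not their sizes. This is rescued by non-tangentiality together with the bounded chain length: if $B(A_1,r_1)$ contains $x_k$, then $d(x_k,\partial\Omega)\leq d(B_1,\partial\Omega)+2r_1<(M+2)r_1$ gives $r_1>d(x_k,\partial\Omega)/(M+2)$, while $d(B_1,\partial\Omega)\leq d(x_k,\partial\Omega)$ gives $r_1<M\,d(x_k,\partial\Omega)$, and these two-sided bounds propagate inductively through the boundedly many consecutive intersecting balls, yielding $d(z,\partial\Omega)\geq c(M)\rho_k$ and total travel $\leq C(M)\rho_k$ per scale. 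With that short induction supplied (and the equally routine Harnack-chain join of $x$ to the first corkscrew point $A_{d_x}(Q)$, which you leave implicit), your geometric summation $\sum_k\rho_k\sim\rho_N\sim|x-y|$ and the scale-by-scale cigar estimate go through exactly as stated, and the resulting $(\varepsilon,\delta)$ depend only on $M$ and $r_0$, with $\delta$ a fixed fraction of $r_0$ ensuring all corkscrew scales stay below $r_0$.
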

Let us characterize the geometry of the NTA domains in the plane. There is a close connection between NTA domains and the theory of quasi-conformal mappings. By a quasicircle is understood the image of a circle by a quasi conformal mapping. A domain bounded by a quasicircle is called a quasidisc. For the theory on quasi-conformal mappings we can refer to Refs.~\cite{Gehring} and \cite{Vaisala} for example.
\begin{definition}
A simple closed curve in the plane is said to satisfy \textit{Ahlfors’ three point condition} if for any points $z_1$, $z_2$ of the curve and any $z_3$ on the arc between $z_1$ and $z_2$ of smaller diameter the distance between $z_1$ and $z_3$ is bounded by a constant times the distance between $z_1$ and $z_2$.
\end{definition}
We thus deduce the following theorem:
\begin{theorem}\label{ThEquivNTAS}
Let $\Omega$ be a bounded simply connected subset of the plane. Then the following statements are equivalent:
\begin{enumerate}
\item $\Omega$ is a quasidisc.
\item $\partial\Omega$ satisfies the Ahlfors’ three point condition.
\item $\Omega$ is an NTA domain.
\item $\Omega$ is a Sobolev extension domain.
\end{enumerate}
\end{theorem}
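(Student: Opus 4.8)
The plan is to establish the four equivalences by assembling classical characterizations from the theory of quasiconformal mappings, rather than proving each implication from scratch; the word ``deduce'' signals that the statement follows by chaining known theorems under the standing hypotheses that $\Omega$ is bounded and simply connected. I would arrange the argument as the cycle $(1)\Rightarrow(3)\Rightarrow(4)\Rightarrow(1)$ together with the separate equivalence $(1)\Leftrightarrow(2)$, taking care that simple connectedness and boundedness are used at every step, since each of these characterizations is known to fail for multiply connected or unbounded domains.

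For $(1)\Leftrightarrow(2)$ I would invoke Ahlfors' characterization of quasicircles: a Jordan curve in the plane bounds a quasidisc if and only if it satisfies the three point (bounded turning) condition. The forward direction uses the quasiconformal reflection across a quasicircle, and the converse reconstructs the reflection from the metric three point bound; both directions are contained in the references on quasiconformal mappings cited above, \cite{Gehring,Vaisala}. This step is purely a boundary statement and does not yet use Jones' extension theory.

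For $(4)\Rightarrow(1)$ and the step from $(1)$ towards $(3)$ I would pass through the notion of uniform domains, that is, $(\eps,\delta)$-domains. By Jones~\cite{JONES-1981}, already recalled in the excerpt, a simply connected planar domain is a Sobolev extension domain if and only if it is an $(\eps,\infty)$-domain; and for a bounded simply connected planar domain the uniform condition is equivalent to being a quasidisc by the Martio--Sarvas and Gehring--Osgood characterization, the boundedness allowing the scale $\delta$ to be taken infinite. Composing these gives $(4)\Leftrightarrow(1)$. For $(1)\Rightarrow(3)$ I would verify the NTA axioms for a quasidisc directly: the interior corkscrew and the Harnack chain condition follow from the cigar/double cone connectivity built into the $(\eps,\delta)$ definition, while the exterior corkscrew follows because the complement of a quasidisc in the Riemann sphere is again a quasidisc via the same quasiconformal reflection. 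Finally $(3)\Rightarrow(4)$ is immediate from Theorem~\ref{equivNTAepsdelt}, which makes a bounded NTA domain an $(\eps,\delta)$-domain, combined with Jones' theorem.

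The genuinely delicate points, rather than the logic of the cycle, are the two passages that cross between analytic and geometric descriptions: Jones' theorem itself (which we cite) and the quasidisc $\Leftrightarrow$ uniform domain equivalence, where one must be attentive to the bounded versus unbounded distinction and to the fact that the boundary is a Jordan curve so that ``quasidisc'' is well defined. I expect the main bookkeeping obstacle to be tracking that all constants $(\eps,\delta,M,r_0)$ remain mutually controlled along the cycle, which is needed elsewhere in the paper for the uniform estimates; but for the qualitative equivalence asserted here the cycle closes and the four statements coincide.
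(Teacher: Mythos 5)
Your proposal is correct, but it is organized differently from the paper's proof. The paper disposes of the theorem by three direct citations arranged as a star centered at statement (1): $(1)\Leftrightarrow(2)$ is Ahlfors' theorem \cite{Ahlfors}, $(1)\Leftrightarrow(3)$ is cited wholesale from \cite{Jones2} (quasidiscs coincide with simply connected planar NTA domains), and $(1)\Leftrightarrow(4)$ is Jones' extension theorem \cite{JONES-1981}. You instead close the cycle $(1)\Rightarrow(3)\Rightarrow(4)\Rightarrow(1)$ together with $(1)\Leftrightarrow(2)$, routing through uniform ($(\eps,\delta)$-) domains: you use Theorem~\ref{equivNTAepsdelt} for $(3)\Rightarrow(4)$ (correctly noting that Jones proves extension for all $(\eps,\delta)$-domains, and that boundedness lets one pass between finite and infinite $\delta$), you use Jones' planar converse plus the Martio--Sarvas/Gehring--Osgood equivalence of quasidiscs with bounded simply connected uniform domains for $(4)\Rightarrow(1)$, and you verify the NTA axioms for a quasidisc by hand for $(1)\Rightarrow(3)$, getting the exterior corkscrew from quasiconformal reflection. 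Each step is sound, so the cycle closes. What your route buys is independence from the \cite{Jones2} equivalence as a black box and, more usefully for this paper, explicit control of how the constants $(\eps,\delta,M,r_0)$ transform along the chain, which is exactly the kind of quantitative dependence exploited later (Corollary~\ref{linfPoissonR2}, Theorem~\ref{thmconvR3}); the cost is the extra intermediary (the quasidisc/uniform-domain equivalence, which the paper never needs to invoke) and a longer argument. The paper's version buys brevity at the price of treating all three equivalences as opaque citations. One small point of care in your write-up: the direct verification of the Harnack chain condition from the cigar condition deserves a sentence (non-tangential balls along the core of the cigar, with length of the chain controlled by $\log(|P_1-P_2|/\epsilon)$ and hence by $C$ only), but this is routine and does not affect correctness.
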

\begin{proof}
$(1)\Leftrightarrow(2)$ is due to Ref.~\cite{Ahlfors} and $(1)\Leftrightarrow(3)$ is due to Ref.~\cite{Jones2}. $(1)\Leftrightarrow(4)$ follows from Ref.~\cite{JONES-1981}.
\end{proof}

Going back to the Sobolev extension results, Jones also mentioned that there are no equivalence between the Sobolev extension domain and the $(\eps,\infty)$-domains in $\R^3$, $i.e.$ there are Sobolev extension domains which are not $(\eps,\infty)$-domains. This question was solved by Haj\l{}asz and al.~\cite{HAJLASZ-2008-1}  for Sobolev spaces $W^k_p(\Omega)$ with $k\in \N^*$ and $1<p<\infty$ giving the optimal class of the Sobolev extension domains in $\R^n$.
It consists of all $n$-sets ($i.e.$  $d$-sets with $d=n$)  on which the Sobolev space is equivalent to the space $C_p^k(\Omega)$ of the sharp functions of the same regularity: %
\begin{multline*}
                                                              C_p^k(\Omega)=\{f\in L^p(\Omega)|\\
                                                              f_{k,\Omega}^\sharp(x)=\sup_{r>0} r^{-k}\inf_{P\in \mathcal{P}^{k-1}}\frac{1}{\lambda(B_r(x))}\int_{B_r(x)\cap \Omega}|f-P|\dy\in L^p(\Omega)\}
                                                             \end{multline*}
with the norm $\|f\|_{C_p^k(\Omega)}=\|f\|_{L^p(\Omega)}+\|f_{k,\Omega}^\sharp\|_{L^p(\Omega)}$ and with the  notation $\mathcal{P}^{k-1}$ for the space of polynomials on $\mathbb{R}^n$ of degree less or equal $k-1$. More precisely, it holds
\begin{theorem}[Sobolev extension~\cite{HAJLASZ-2008}]\label{ThSExHaj}
 For $1<p <\infty$, $k=1,2,...$ a domain $\Omega\subset \R^n$ is a $W^k_p$-extension domain if and only if  $\Omega$ is an $n$-set and $W^{k,p}(\Omega)=C_p^k(\Omega)$ (in the sense of equivalent norms).
\end{theorem}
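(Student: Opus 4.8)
The plan is to prove the two implications separately, anchoring everything on Calderón's characterization of Sobolev spaces on the full space, $W^{k,p}(\R^n)=C_p^k(\R^n)$ with equivalent norms, and using the $n$-set (measure density) condition as the bridge that transfers averages and polynomial approximations between $\Omega$ and $\R^n$. Recall that the sharp maximal function $f_{k,\Omega}^\sharp$ records, scale by scale, the error of the best approximation of $f$ by a polynomial of $\mathcal{P}^{k-1}$, and that membership in $C_p^k$ says precisely that these errors are $p$-integrable after the homogeneity factor $r^{-k}$; this is what makes the class amenable to a Whitney-type construction and what makes it coincide with $W^{k,p}$ on nice sets.

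\emph{Necessity.} Assume $E:W^{k,p}(\Omega)\to W^{k,p}(\R^n)$ is bounded. The first and more delicate step is the lower bound $\lambda(B_r(x)\cap\Omega)\ge c\,r^n$ for $x\in\Omega$, $0<r\le 1$ (the $n$-set property; the matching upper bound is automatic for Lebesgue measure). This is obtained by a testing lemma: one applies $E$ to functions localized at scale $r$ around $x$ — rescaled smooth bumps $\varphi_r$ with $|\nabla^j\varphi_r|\lesssim r^{-j}$ — and plays the operator bound $\|E\varphi_r\|_{W^{k,p}(\R^n)}\le C\|\varphi_r\|_{W^{k,p}(\Omega)}$ against a Sobolev inequality on $B_r(x)$, so that a deficiency of mass of $\Omega$ inside $B_r(x)$ becomes incompatible with the extended function being forced to remain nondegenerate on the full ball. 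Once the $n$-set property is secured, the identity $W^{k,p}(\Omega)=C_p^k(\Omega)$ follows by transfer through $\R^n$: given $u\in W^{k,p}(\Omega)$, extend to $Eu\in W^{k,p}(\R^n)=C_p^k(\R^n)$ and compare the sharp maximal functions, the measure density making the restricted averages $\frac{1}{\lambda(B_r(x))}\int_{B_r(x)\cap\Omega}|u-P|\dy$ comparable to the ambient averages of $Eu$; this gives $\|u\|_{C_p^k(\Omega)}\lesssim\|u\|_{W^{k,p}(\Omega)}$, and the reverse (genuinely nontrivial) inequality is obtained by the same route.

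\emph{Sufficiency.} Assume $\Omega$ is an $n$-set and $W^{k,p}(\Omega)=C_p^k(\Omega)$, and construct a bounded extension. I would take a Whitney decomposition $\{Q\}$ of $\R^n\setminus\overline{\Omega}$ together with a subordinate smooth partition of unity $\{\psi_Q\}$. To each $Q$ I associate a reference ball $B_Q\subset\Omega$ of comparable size and distance, and let $P_Q\in\mathcal{P}^{k-1}$ be a near-best approximation of $u$ on $B_Q$; this is exactly the object whose existence and size are controlled by $f_{k,\Omega}^\sharp$, hence by the hypothesis $u\in C_p^k(\Omega)$. Define $Eu=u$ on $\Omega$ and $Eu=\sum_Q\psi_Q P_Q$ on the complement. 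Bounding the derivatives of $Eu$ up to order $k$ near $\partial\Omega$ reduces, after summing over the cubes and using the finite overlap of the $B_Q$, to $p$-th power sums of the rescaled approximation errors $r_Q^{-k}\cdot(\text{error on }B_Q)$, which reproduce $\|f_{k,\Omega}^\sharp\|_{L^p(\Omega)}$ up to constants; measure density guarantees that each $B_Q$ carries enough mass for the coefficients of $P_Q$ to be stably determined and for the overlap counting to close. This yields $\|Eu\|_{W^{k,p}(\R^n)}\lesssim\|u\|_{C_p^k(\Omega)}\simeq\|u\|_{W^{k,p}(\Omega)}$.

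I expect the Whitney construction of the sufficiency direction to be the main obstacle. One must choose the reference balls and polynomials coherently so that adjacent cubes carry comparable $P_Q$ — the jumps of the glued function being controlled through telescoping differences of polynomials on overlapping regions — and then estimate the entire $W^{k,p}(\R^n)$ norm, including all derivatives of the partition-of-unity products, purely in terms of the sharp maximal data. The measure density condition enters at every stage of this estimate, which is exactly why it surfaces as a necessary hypothesis rather than a convenient one; by contrast the necessity direction is comparatively soft once Calderón's identity on $\R^n$ and the measure-density testing lemma are in place.
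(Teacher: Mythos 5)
The paper contains no proof of this statement: Theorem~\ref{ThSExHaj} is imported verbatim from Haj\l{}asz--Koskela--Tuominen~\cite{HAJLASZ-2008}, so your attempt must be measured against that proof, whose architecture (Calder\'on's identity $W^{k,p}(\R^n)=C^k_p(\R^n)$ for $1<p<\infty$, a testing argument for measure density, and a Whitney-type extension in the $C^k_p$ scale due to Shvartsman) your sketch does broadly reproduce. However, two of your steps are genuinely gapped, not merely compressed.

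First, in the necessity half you claim the inclusion $C^k_p(\Omega)\subset W^{k,p}(\Omega)$ --- which you correctly flag as the nontrivial inequality --- is ``obtained by the same route'' as the easy inclusion. It cannot be. The easy direction is sound: for $u\in W^{k,p}(\Omega)$ one has pointwise $u^\sharp_{k,\Omega}(x)\le (Eu)^\sharp_{k,\R^n}(x)$ on $\Omega$ (the paper's definition of $f^\sharp_{k,\Omega}$ normalizes by the full ball measure $\lambda(B_r(x))$, so restricting the integral to $B_r(x)\cap\Omega$ only decreases it), whence $\|u\|_{C^k_p(\Omega)}\lesssim\|Eu\|_{C^k_p(\R^n)}\simeq\|Eu\|_{W^{k,p}(\R^n)}\lesssim\|u\|_{W^{k,p}(\Omega)}$. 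But for the reverse inclusion you may not apply the hypothesized operator $E:W^{k,p}(\Omega)\to W^{k,p}(\R^n)$ to $u\in C^k_p(\Omega)$, since membership of $u$ in $W^{k,p}(\Omega)$ is exactly what is to be proved. The actual route in~\cite{HAJLASZ-2008} is to first secure measure density, then invoke the Shvartsman extension theorem for $C^k_p$ on $n$-sets --- i.e.\ precisely the Whitney construction of your sufficiency half --- to extend $u$ to $C^k_p(\R^n)=W^{k,p}(\R^n)$ and restrict back. So the two halves of the theorem interlock through the same hard construction; your necessity direction is not ``comparatively soft'' but contains the entire Whitney machinery as a subroutine, contrary to the logical layout you propose.

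Second, your testing lemma for measure density does not close as stated. Testing with a bump $\varphi_r$ equal to $1$ on $B_{r/2}(x)$ with $|\nabla^j\varphi_r|\lesssim r^{-j}$, and playing $\|E\varphi_r\|_{W^{k,p}(\R^n)}\le C\|\varphi_r\|_{W^{k,p}(\Omega)}$ against the Sobolev embedding, yields (for $kp<n$, with $1/p^*=1/p-k/n$) only the two-scale inequality $\lambda(\Omega\cap B_{r/2}(x))^{p/p^*}\le C\, r^{-kp}\,\lambda(\Omega\cap B_r(x))$; a single such inequality is compatible with $\lambda(\Omega\cap B_r(x))$ degenerating faster than $r^n$, and one needs the self-improving iteration over dyadic scales (exploiting that $p/p^*<1$), together with separate arguments in the borderline and supercritical cases $kp=n$ and $kp>n$, to conclude $\lambda(\Omega\cap B_r(x))\ge c\,r^n$. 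This iteration is where the substance of the necessity proof in~\cite{HAJLASZ-2008} lies. Moreover your stated mechanism --- that the extended function is ``forced to remain nondegenerate on the full ball'' --- is incorrect: $E\varphi_r$ is controlled by nothing outside $\Omega$; the lower bound comes solely from $\|E\varphi_r\|_{L^{p^*}(\R^n)}\ge\|\varphi_r\|_{L^{p^*}(\Omega\cap B_{r/2}(x))}\ge\lambda(\Omega\cap B_{r/2}(x))^{1/p^*}$. Finally, in the sufficiency half you should also record that measure density forces $\lambda(\del\Omega)=0$, without which the glued function $Eu$ is not even well defined as an a.e.\ extension of $u$ and the distributional derivatives need not match across $\del\Omega$.
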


Thanks, for instance, to Ref.~\cite{JONSSON-1984} it is possible to define the trace of a regular distribution point wise. More precisely, for an arbitrary open set $\Omega$ of $\mathbb{R}^n$ the trace operator $\hbox{Tr}$ is defined for $u\in L^1_{loc}(\Omega)$ by
\begin{equation}\label{deftrace}
	\hbox{Tr} u(x)=\lim_{r\rightarrow 0} \frac{1}{\lambda(\Omega\cap B_r(x))}\int_{\Omega\cap B_r(x)} u(y)\;d\lambda,
\end{equation}
where $\lambda$ is the $n$-dimensional Lebesgue measure and $B_r(x)$ is the Euclidean ball centered at $x$ of  radius $r$.
The trace operator $\hbox{Tr}$ is considered for all $x\in \overline{\Omega}$ for which the limit exists.
By~\cite{WALLIN-1991,JONSSON-1984} it is known that, if $\del \Omega$ is a $d$-set with a positive Borel $d$-measure $\mu$ with  $\operatorname{supp}\mu=\del \Omega$, the limit in Definition~\eqref{deftrace} exists $\mu$-a.e. for $x\in \del \Omega$.

In addition it is possible to define the trace operator as a linear continuous operator from a Sobolev space on $\Omega$ to a Besov space on $\del \Omega$ which is its image, $i.e.$ there exists the right inverse extension $E_{\del \Omega\to \Omega}$ operator and $\hbox{Tr}(E_{\del \Omega\to \Omega} u)=u\in \hbox{Im}(\hbox{Tr}).$
The image of $\hbox{Tr}(H^1(\Omega))$ in this case is the Besov space $B^{2,2}_\alpha(\del \Omega)$ with $\alpha=1-\frac{n-d}{2}>0$~\cite{WALLIN-1991,JONSSON-1984}. From where we obtain the restriction on the dimension of the boundary: $n-2<d<n$. By the way, for a connected boundary of a bounded domain the case $n-2<d<n-1$ is impossible, so it is more realistic to impose $n-1\le d<n$.

Let us notice that if the image of the trace is a Besov space with $\alpha<1$ then we don't need to have any additional geometrical restrictions on the boundary to have the continuity and the surjective property of the trace. But if $\alpha\ge1$ we need to ensure~\cite[2.1]{WALLIN-1982} that there exists a bounded linear extension operator  of the Hölder space $C^{k-1,\alpha-k+1}(\del \Omega)$ to the Hölder space $C^{k-1,\alpha-k+1} (\R^n)$, where for $k\in \N^*$ $k-1<\alpha\le k$ (see also p.~2 Ref.~\cite{JONSSON-1984}). This extension of  Hölder spaces  allows~\cite{JONSSON-1997}  to show the existence of a linear continuous extension  of the Besov space
$B^{p,p}_{\alpha}(\del \Omega)$ on $ \del \Omega$ to the Sobolev space $W^k_p(\R^n)$ with $\alpha=k - \frac{(n - d)}{p}\ge 1$ and $k\ge 2$. To be able to ensure it, we need additionally to assume that the boundary $\del \Omega$ preserves the Markov local inequality~\cite{JONSSON-1984} 
(see Ref.~\cite{ROZANOVA-PIERRAT-2020} for a detailed discussion). 
\begin{definition}[Markov's local inequality]\label{defMark}
A closed subset $V$ in $\mathbb{R}^n$ preserves Markov's local inequality if for every fixed $k\in \mathbb{N}^*$, there exists a constant $c=c(V,n,k)>0$, such that 
$$\max_{V\cap \overline{B_r(x)}}\vert \nabla P\vert\leq \frac{c}{r} \max_{V\cap \overline{B_r(x)}}\vert  P\vert$$
for all polynomials $P\in\mathcal{P}_k$ and all closed balls $\overline{B_r(x)}$, $x\in V$ and $0<r\leq 1$.
\end{definition}
The geometrical characterization of sets preserving Markov's local inequality was initially given in Ref.~\cite{JONSSON-1984-1} (see Theorem 1.3) and can be simply interpreted as sets which are not too flat anywhere. Smooth manifolds in $\R^n$ of dimension less than $n$, as for instance a sphere, are examples of ``flat'' sets not preserving Markov's local inequality, but any $d$-set with $d>n-1$ preserves it, as all $\R^n$.
In the case $\alpha<1$ (hence $k=1$) the local Markov inequality (see Definition~\ref{defMark}) is trivially satisfied  on all closed sets of $\R^n$, and hence we do not need to impose it~\cite[p.~198]{JONSSON-1997}.
  Moreover, we able to consider more general boundaries if we modify the definition of the image of the trace~\cite{ROZANOVA-PIERRAT-2020} thanks to Ref.~\cite{JONSSON-1994}. 

  As detailed in Refs.~\cite{JONSSON-1994,JONSSON-2009} (see also Refs.~\cite{ROZANOVA-PIERRAT-2020,DEKKERS-2020})  we can consider Borel positive measures $\mu$  with a support $\operatorname{supp} \mu= \del \Omega$, which satisfy for some constants $c>0$ and $c'>0$
\begin{equation}\label{EqMeas}
	c\:r^s\leq \mu(B_r(x))\leq c'\:r^d, \quad x\in \del \Omega, \quad 0<r\leq 1.
\end{equation}
  We see that for $d=s$, the measure $\mu$ is a $d$-measure (see Definition~\ref{defdset}). 
  For this general measure $\mu$ supported on a closed subset $\del \Omega\subset \mathbb{R}^n$, which is actually a boundary of a domain $\Omega$ and hence at least $n-1$-dimensional, it is possible, thanks to Ref.~\cite{JONSSON-1994},  to define the corresponding Lebesgue spaces $L^p(\del \Omega,\mu)$ and  Besov spaces $\hat{B}_1^{p,p}(\del \Omega)$ (the norm is different to the norm of $B^{p,p}_{1-\frac{n-d}{p}}(\del \Omega)$ constructed on $d$-sets)  in a such way that we have our second assumption on the operator of the trace mapping $W^1_p(\Omega)$ with $1<p<\infty$ onto $\hat{B}_1^{p,p}(\del \Omega)$ (to compare with Theorem~6 Ref.~\cite{ROZANOVA-PIERRAT-2020}). This is is a particular case of Theorem 1 from Ref.~\cite{JONSSON-1994}.  

The spaces $\hat{B}_1^{p,p}(\del \Omega)$, $B^{p,p}_{1-\frac{n-d}{p}}(\del \Omega)$  are Banach spaces, while $\hat{B}_1^{2,2}(\del \Omega)$, $B^{2,2}_{1-\frac{n-d}{2}}(\del \Omega)$ are Hilbert spaces. %
It is important to notice that for a $d$-set boundary $\del \Omega$ the space $\hat{B}^{p,p}_1(\del \Omega)$ is equivalent to the Besov space $B^{p,p}_\alpha(\del \Omega)$ with $0<\alpha= 1-\frac{n-d}{p}<1$ (see Ref.~\cite{JONSSON-1994}). In addition if $d=s=n-1$, the trace space of $H^1(\Omega)$,  as it also mentioned in Ref.~\cite{BARDOS-2016}, is given by the Besov space with $\alpha=\frac{1}{2}$  which coincide with  $H^\frac{1}{2}(\del \Omega)$:
$$\hat{B}_1^{2,2}(\del \Omega)=B_\frac{1}{2}^{2,2}(\del \Omega)=H^\frac{1}{2}(\del \Omega)$$ as usual in the case of the classical results~\cite{LIONS-1972,MARSCHALL-1987} for Lipschitz boundaries.

Here in this article, to be able to use the regular boundary conditions with $g$ as the trace of an element of $H^2(\Omega)$, we are more interested to stay in the framework of admissible domains introduced in Ref.~\cite{ARFI-2017} and thus to work with the $d$-set boundaries preserving Markov's local inequality, what is the case for  $n-1<d<n$.  Since  Theorem 1 from Ref.~\cite{JONSSON-1994} can be applied only for $H^\beta(\Omega)$ with $\frac{n-d}{2}<\beta \le 1+\frac{n-s}{2}$ with $s\ge d\ge n-1$, the case $H^2(\Omega)$ never occurs. Nevertheless, for weak solutions only in $H^1(\Omega)$ there is no problem.

\begin{definition}[Admissible domain]\label{defadmisdomain}
Let $n-1\le d<n$, $1<p<\infty$ and $k\in \mathbb{N}^*$. A domain $\Omega\subset \mathbb{R}^n$ is called admissible if 
it is a bounded Sobolev extension domain, $i.e.$ an $n-$set  for which $W^{k,p}(\Omega)=C^k_p(\Omega)$ as set with equivalent norms, with a $d-$set boundary $\partial\Omega$ preserving local Markov's inequality.
\end{definition}
Thanks to Theorem~\ref{equivNTAepsdelt} for the plane bounded simply connected domains the admissible domains are equivalent to the NTA domains with a $d$-set boundary preserving local Markov's inequality for $1\le d<2$.

Now, let us recall the trace theorem from Ref.~\cite{ARFI-2017}.

\begin{theorem}\label{thmtradmissdom}[\cite{JonssonBes}]
Let $1<p<+\infty$, $k\in \mathbb{N}^*$ be fixed. Let $\Omega$ be an admissible domain in $\mathbb{R}^n$. Then for $\beta=k-\frac{n-d}{p}>0$, the following trace operators
\begin{enumerate}
\item $\text{Tr}:W^{k,p}(\mathbb{R}^n)\rightarrow B^{p,p}_{\beta}(\partial \Omega)$,
\item $\text{Tr}_{\Omega}:W^{k,p}(\mathbb{R}^n)\rightarrow W^{k,p}(\Omega)$,
\item $\text{Tr}_{\partial\Omega}:W^{k,p}(\Omega)\rightarrow  B^{p,p}_{\beta}(\partial \Omega)$
\end{enumerate} 
are linear continuous and surjective with linear bounded right inverse, \textit{i.e.} extension,
operators $E:B^{p,p}_{\beta}(\partial \Omega)\rightarrow W^{k,p}(\mathbb{R}^n)$, $E_{\Omega}: W^{k,p}(\Omega)\rightarrow W^{k,p}(\mathbb{R}^n)$, $E_{\partial\Omega}: B^{p,p}_{\beta}(\partial \Omega)\rightarrow W^{k,p}(\Omega)$.
\end{theorem}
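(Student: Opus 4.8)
The plan is to build the three trace/extension pairs from two independent analytic ingredients that are already available and then to obtain the third by composition. Operators (1) and (2) act on $W^{k,p}(\R^n)$ but are of completely different nature: $\text{Tr}_\Omega$ is nothing but the restriction $u\mapsto u|_\Omega$, whereas $\text{Tr}$ is the pointwise boundary trace~\eqref{deftrace} onto the $d$-set $\del\Omega$. Operator (3) will then be realized as $\text{Tr}_{\partial\Omega}=\text{Tr}\circ E_\Omega$ and its extension as $E_{\partial\Omega}=\text{Tr}_\Omega\circ E=(E\,\cdot)|_\Omega$, so that the whole statement reduces to two imported theorems plus one consistency lemma.

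First I would dispatch operator (2). Since $\Omega$ is admissible, it is by Definition~\ref{defadmisdomain} an $n$-set with $W^{k,p}(\Omega)=C^k_p(\Omega)$, so Theorem~\ref{ThSExHaj} furnishes a bounded linear extension $E_\Omega\colon W^{k,p}(\Omega)\to W^{k,p}(\R^n)$. The restriction $\text{Tr}_\Omega$ is trivially linear and bounded (the $W^{k,p}$-norm decreases under restriction), it is surjective because $\text{Tr}_\Omega(E_\Omega u)=(E_\Omega u)|_\Omega=u$, and this same identity exhibits $E_\Omega$ as its bounded right inverse. Next, operator (1) is exactly the Jonsson--Wallin trace theorem on the $d$-set $\del\Omega$: for $\beta=k-\frac{n-d}{p}>0$ the map $\text{Tr}\colon W^{k,p}(\R^n)\to B^{p,p}_\beta(\del\Omega)$ is linear, continuous and surjective, with a bounded linear extension $E\colon B^{p,p}_\beta(\del\Omega)\to W^{k,p}(\R^n)$ satisfying $\text{Tr}(Eg)=g$ $\mu$-a.e. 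This is where the admissibility hypotheses on the boundary are consumed: the $d$-set property ($n-1\le d<n$) identifies the trace space as $B^{p,p}_\beta(\del\Omega)$ (cf.~\cite{JONSSON-1984,WALLIN-1991}), and the preservation of the local Markov inequality (Definition~\ref{defMark}) is precisely what is needed~\cite{WALLIN-1982,JONSSON-1997} to construct the bounded extension $E$ in the regime $\beta\ge1$, while for $\beta<1$ no such restriction is required. I would simply quote these results rather than reprove them.

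It remains to assemble operator (3), and here lies \emph{the main obstacle}: showing that the boundary trace of an element of $W^{k,p}(\Omega)$ is well defined independently of the chosen extension, i.e.\ that the interior averages of~\eqref{deftrace} over $\Omega\cap B_r(x)$ compute the same boundary value as the whole-space averages over $B_r(x)$. The key lemma is that for every $f\in W^{k,p}(\R^n)$ one has, for $\mu$-a.e.\ $x\in\del\Omega$, $\lim_{r\to0}\frac{1}{\lambda(\Omega\cap B_r(x))}\int_{\Omega\cap B_r(x)} f\,d\lambda=\text{Tr} f(x)$. This follows from the $n$-set (interior corkscrew) property of $\Omega$, which gives $\lambda(\Omega\cap B_r(x))\ge c\,\lambda(B_r(x))$, so that at $\mu$-a.e.\ $x\in\del\Omega$, which is a Lebesgue point of $f$, the interior average is controlled by $\frac1c$ times the full average of $|f-\text{Tr} f(x)|$ and hence converges to the same limit.

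Granting this lemma, I set $\text{Tr}_{\partial\Omega}u:=\text{Tr}(E_\Omega u)$, which by the lemma equals the intrinsic interior trace of $u$ and is therefore independent of the extension; its continuity is immediate from $\|\text{Tr}_{\partial\Omega}u\|_{B^{p,p}_\beta(\del\Omega)}\le C\|E_\Omega u\|_{W^{k,p}(\R^n)}\le C'\|u\|_{W^{k,p}(\Omega)}$. Finally, with $E_{\partial\Omega}g:=(Eg)|_\Omega$ one gets, again by the consistency lemma, $\text{Tr}_{\partial\Omega}(E_{\partial\Omega}g)=\text{Tr}(Eg)=g$, so $E_{\partial\Omega}$ is the desired bounded right inverse and $\text{Tr}_{\partial\Omega}$ is surjective. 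I expect the verification of the consistency lemma, which balances the interior fatness of $\Omega$ against the measure-zero $d$-set boundary, to be the only genuinely analytic step, everything else being bookkeeping on the two quoted theorems.
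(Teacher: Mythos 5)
Your proposal is correct, and it reconstructs exactly the argument behind the paper's source: the paper states this theorem without proof, citing \cite{JonssonBes} via \cite{ARFI-2017}, where it is obtained precisely by combining the Sobolev extension $E_\Omega$ for $n$-sets (Theorem~\ref{ThSExHaj}) with the Jonsson--Wallin trace/extension pair on the $d$-set boundary (Markov's inequality entering only for $\beta\ge 1$), and then setting $\text{Tr}_{\partial\Omega}=\text{Tr}\circ E_\Omega$ and $E_{\partial\Omega}=(E\,\cdot)|_{\Omega}$. Your consistency lemma is also the right well-definedness step and is valid: the $n$-set property gives $\lambda(\Omega\cap B_r(x))\ge c\,\lambda(B_r(x))$ for $x\in\partial\Omega$, so the interior averages in~\eqref{deftrace} inherit, at $\mu$-a.e.\ boundary point, the Lebesgue-point convergence of the full-ball averages of $|f-\text{Tr}f(x)|$, making $\text{Tr}_{\partial\Omega}$ independent of the chosen extension.
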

The definition of the Besov space $B^{p,p}_{\beta}(\partial \Omega) $ on a close $d$-set $\partial\Omega$ can be found, for instance, in Ref.~\cite{JONSSON-1984} p.135.  
The next proposition was shown in Ref.~\cite{ARFI-2017} with the help of Ref.~\cite{Lancia1}.
\begin{proposition}[Green formula]\label{Green}
Let $\Omega$ be an admissible domain in $\mathbb{R}^n$ ($n\geq2$). Then for all $u$, $v\in H^1(\Omega)$ with $\Delta u\in L_2(\Omega)$ it holds the Green formula
$$\langle \frac{\partial u}{\partial n }, Tr v\rangle_{((B^{2,2}_{\beta}(\partial\Omega))',B^{2,2}_{\beta}(\partial\Omega))}:= \int_{\Omega}v\Delta u \;dx+\int_{\Omega}\nabla u \nabla v\;dx,$$
where $\beta=1-\frac{n-d}{2}>0$ and the Besov space $B^{2,2}_{\beta}(\partial\Omega)$ and dual Besov space $(B^{2,2}_{\beta}(\partial\Omega))'= B^{2,2}_{-\beta}(\partial\Omega)$.
\end{proposition}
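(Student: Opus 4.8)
The plan is to read the displayed identity not as an integration-by-parts to be verified, but as the very \emph{definition} of the normal derivative $\frac{\partial u}{\partial n}$ as an element of the dual trace space, and then to check that this definition is legitimate and consistent. Fix $u\in H^1(\Omega)$ with $\Delta u\in L^2(\Omega)$ and set
$$L(v):=\int_{\Omega}v\,\Delta u\,\dx+\int_{\Omega}\nabla u\,\nabla v\,\dx,\qquad v\in H^1(\Omega).$$
First I would note that $L$ is a bounded linear form on $H^1(\Omega)$: Cauchy--Schwarz gives $\abs{L(v)}\le \norm{\Delta u}_{L^2(\Omega)}\norm{v}_{L^2(\Omega)}+\norm{\nabla u}_{L^2(\Omega)}\norm{\nabla v}_{L^2(\Omega)}\le C(u)\,\norm{v}_{H^1(\Omega)}$, so $L\in (H^1(\Omega))'$.

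The decisive point is that $L$ annihilates $H^1_0(\Omega)$. For $v\in C_c^\infty(\Omega)$ the distributional definition of $\Delta u$, together with $\Delta u\in L^2(\Omega)$, yields $\int_\Omega \nabla u\,\nabla v\,\dx=-\int_\Omega v\,\Delta u\,\dx$ and hence $L(v)=0$; by density of $C_c^\infty(\Omega)$ in $H^1_0(\Omega)$ and continuity of $L$ this extends to $L\equiv 0$ on $H^1_0(\Omega)$. I then invoke the trace theorem for admissible domains (Theorem~\ref{thmtradmissdom} with $k=1$, $p=2$, $\beta=1-\frac{n-d}{2}>0$): the operator $\operatorname{Tr}\colon H^1(\Omega)\to B^{2,2}_\beta(\partial\Omega)$ is continuous and surjective with a bounded linear right inverse $E_{\partial\Omega}$. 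Using this I would \emph{define} $\big\langle \frac{\partial u}{\partial n},g\big\rangle:=L(E_{\partial\Omega}g)$ for $g\in B^{2,2}_\beta(\partial\Omega)$. Boundedness is immediate, $\abs{L(E_{\partial\Omega}g)}\le C(u)\,\norm{E_{\partial\Omega}}\,\norm{g}_{B^{2,2}_\beta(\partial\Omega)}$, so $\frac{\partial u}{\partial n}$ is a well-defined element of $(B^{2,2}_\beta(\partial\Omega))'=B^{2,2}_{-\beta}(\partial\Omega)$, provided the construction does not depend on the chosen extension.

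To obtain the formula for arbitrary $v\in H^1(\Omega)$ I decompose $v=(v-E_{\partial\Omega}\operatorname{Tr}v)+E_{\partial\Omega}\operatorname{Tr}v$. The first summand has zero trace, so if it lies in $H^1_0(\Omega)$ then $L(v)=L(E_{\partial\Omega}\operatorname{Tr}v)=\langle \frac{\partial u}{\partial n},\operatorname{Tr}v\rangle$, which is exactly the asserted identity (and the same remark shows the definition above is independent of the extension). Everything thus reduces to the single structural fact $\ker\operatorname{Tr}=H^1_0(\Omega)$.

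This kernel identification is the step I expect to be the main obstacle, since Theorem~\ref{thmtradmissdom} provides surjectivity and a right inverse but not the kernel. The inclusion $H^1_0(\Omega)\subseteq\ker\operatorname{Tr}$ is the easy density argument already used above; the reverse inclusion, that an $H^1(\Omega)$ function with vanishing Besov trace can be approximated in $H^1$ by functions in $C_c^\infty(\Omega)$, genuinely uses the admissible structure of $\Omega$ (the $n$-set/Sobolev-extension property together with the fine pointwise properties of the $d$-set trace of Definition~\eqref{deftrace}) and is the point supplied by the Jonsson--Wallin trace theory as adapted in Ref.~\cite{ARFI-2017} and by the boundary Green-formula machinery of Lancia~\cite{Lancia1}. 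Once this is in hand, the remainder is routine bounded linear algebra on Hilbert and Banach spaces.
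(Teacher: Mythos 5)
Your proposal is correct and takes essentially the same route as the paper: the paper establishes Proposition~\ref{Green} by appeal to Refs.~\cite{ARFI-2017} and~\cite{Lancia1}, where, just as in your argument, the displayed identity is read as the \emph{definition} of $\frac{\partial u}{\partial n}\in (B^{2,2}_{\beta}(\partial\Omega))'$ via the trace/extension pair of Theorem~\ref{thmtradmissdom}, with well-posedness (independence of the extension and validity for all $v\in H^1(\Omega)$) resting on the identification $\ker \operatorname{Tr}_{\partial\Omega}=H^1_0(\Omega)$ from the Jonsson--Wallin theory. You correctly isolate that kernel identification as the only nonroutine ingredient and attribute it to the right sources, so nothing is missing.
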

From~\cite{ARFI-2017} it is also known the generalization of the Rellich-Kondrachov theorem in the class of the Sobolev extension domains.
\begin{theorem}[Sobolev's embeddings]\label{thmsobolembadm}
Let $\Omega\subset\mathbb{R}^n$ be a bounded $n-$set with $W^k_p(\Omega)=C^k_p(\Omega)$, $1<p<+\infty$, $k$, $l\in\mathbb{N}^*$. Then there hold the following compact embeddings
\begin{enumerate}
\item $W^{k+l,p}(\Omega)\subset\subset W^{l,p}(\Omega)$,
\item $W^{k,p}(\Omega)\subset\subset L^q(\Omega)$,
\end{enumerate}
with $q\in [1,+\infty[$ if $kp=n$, $q\in [1,+\infty]$ if $kp>n$, and with $q\in \left[1,\frac{pn}{n-kp}\right[$ if $kp<n$.
Moreover, if $kp<n$ the  embedding
$$W^{k,p}(\Omega) \hookrightarrow L^{\frac{pn}{n-kp}}(\Omega)$$
is continuous.
\end{theorem}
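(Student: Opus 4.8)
The plan is to deduce all the stated embeddings from the corresponding classical results on $\R^n$ and on balls, by means of the bounded extension operator whose existence is guaranteed by the hypotheses. Since $\Omega$ is a bounded $n$-set with $W^{m,p}(\Omega)=C^m_p(\Omega)$, Theorem~\ref{ThSExHaj} provides, for each order $m$ we shall use (namely $m=k$ for the second embedding and $m=k+l$ for the first), a linear bounded extension operator $E\colon W^{m,p}(\Omega)\to W^{m,p}(\R^n)$ with $Eu|_\Omega=u$ and $\norm{Eu}_{W^{m,p}(\R^n)}\le C\norm{u}_{W^{m,p}(\Omega)}$. Because $\Omega$ is bounded, I would fix once and for all a ball $B\subset\R^n$ with $\overline\Omega\subset B$ and a cut-off function $\varphi\in C_c^\infty(B)$ with $\varphi\equiv 1$ on $\overline\Omega$; multiplication by $\varphi$ is continuous on every $W^{m,p}(\R^n)$ and sends it into functions supported in $B$.

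For the continuous embedding I would extend and then restrict. Given $u\in W^{k,p}(\Omega)$ with $kp<n$, the classical Gagliardo--Nirenberg--Sobolev inequality on $\R^n$ (see~\cite{EVANS-2010}) gives $\norm{Eu}_{L^{pn/(n-kp)}(\R^n)}\le C\norm{Eu}_{W^{k,p}(\R^n)}$, and restricting to $\Omega$ together with the boundedness of $E$ yields $W^{k,p}(\Omega)\hookrightarrow L^{pn/(n-kp)}(\Omega)$. Since $\Omega$ is bounded, $L^{pn/(n-kp)}(\Omega)\hookrightarrow L^q(\Omega)$ for every $q\le pn/(n-kp)$, which covers the continuous part of the scale; the cases $kp=n$ and $kp>n$ follow in the same way from the classical embeddings of $W^{k,p}(\R^n)$ into every $L^q(\R^n)$ with $q<\infty$, respectively into $C^{0,\gamma}(\R^n)\subset L^\infty$.

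For the compact embeddings I would argue by sequences. Let $(u_m)$ be bounded in $W^{k+l,p}(\Omega)$ (resp. in $W^{k,p}(\Omega)$). Then $(\varphi E u_m)$ is bounded in $W^{k+l,p}(\R^n)$ (resp. $W^{k,p}(\R^n)$) and every term is supported in the fixed ball $B$, which has smooth boundary. The classical Rellich--Kondrachov theorem on $B$, iterated $k$ times for the first embedding, then furnishes a subsequence converging in $W^{l,p}(B)$ (resp. in $L^q(B)$ for every $q$ in the stated compact range). Restricting to $\Omega$ and using $\varphi\equiv 1$ there, so that $(\varphi E u_m)|_\Omega=u_m$, I obtain a subsequence of $(u_m)$ convergent in $W^{l,p}(\Omega)$ (resp. $L^q(\Omega)$), which is exactly the asserted compactness.

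The only genuinely delicate point is the availability of the extension operator at the order actually needed: the second embedding requires only $W^{k,p}$-extension, which is precisely the hypothesis, whereas the first requires extension of $W^{k+l,p}(\Omega)$. Here one uses that the measure-density ($n$-set) property is the geometric obstruction isolated in Theorem~\ref{ThSExHaj} and is independent of the order, so the extension property propagates along the Sobolev scale; alternatively one reads the norm equivalence $W^{m,p}(\Omega)=C^m_p(\Omega)$ at the relevant orders as part of the standing admissibility hypothesis. Apart from this, the argument is the standard extension--cut-off--restriction reduction, and the boundedness of $\Omega$ is used twice in an essential way: to choose a single cut-off $\varphi$ valid for the whole sequence, and to pass from the top integrability exponent down to all smaller $q$.
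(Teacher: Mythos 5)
Your argument is correct in substance and follows the same route as the source on which the paper relies: the paper states Theorem~\ref{thmsobolembadm} without proof, citing Ref.~\cite{ARFI-2017}, and the proof there is precisely your extension--cut-off--restriction reduction to the classical Gagliardo--Nirenberg--Sobolev inequality on $\R^n$ and the Rellich--Kondrachov theorem on a smooth bounded set $B\supset\overline{\Omega}$. The mechanics you give (continuity of multiplication by $\varphi$ on $W^{m,p}(\R^n)$, iteration of Rellich--Kondrachov on $B$ for item~1, restriction to $\Omega$ using $\varphi\equiv 1$ there, the $kp>n$ case via compactness into $C^{0}(\overline{B})$, and the descent from the endpoint exponent to all smaller $q$ by boundedness of $\Omega$) are all standard and correct.

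The one point that needs correcting is your first justification for the order-$(k+l)$ extension operator. The claim that the $n$-set property is \emph{the} geometric obstruction, so that ``the extension property propagates along the Sobolev scale,'' is not supported by Theorem~\ref{ThSExHaj} and in fact runs against its content: measure density is only a necessary condition, and the characterization of Ref.~\cite{HAJLASZ-2008} is order-specific, requiring the norm equivalence $W^{m,p}(\Omega)=C^m_p(\Omega)$ separately at each order $m$. Being a $W^{k,p}$-extension domain does not, by that theorem or anything else in the paper, yield the $W^{k+l,p}$-extension property. Your fallback reading is the correct repair, and it is what the paper intends: in Definition~\ref{defadmisdomain} the equivalence $W^{k,p}(\Omega)=C^k_p(\Omega)$ is a standing hypothesis for whichever order is in play, so in item~1 of the theorem the hypothesis must be read at order $k+l$ (the statement's ``$W^k_p(\Omega)=C^k_p(\Omega)$, $k$, $l\in\mathbb{N}^*$'' is loose on this point). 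With that reading your proof is complete; without it, the first compact embedding would be unjustified.
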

The Poincar\'e inequality stays true on bounded arbitrary domain:
\begin{theorem}[Poincar\'e's inequality]\label{inegPoinc}
Let $\Omega\subset\mathbb{R}^n$ with $n\geq 2$ be a bounded  domain. For all $u\in W^{1,p}_0(\Omega)$ with $1\leq p<+\infty$, there exists $C>0$ depending only on $\Omega$, $p$ and $n$ such that
$$\Vert u\Vert_{L^p(\Omega)}\leq C \Vert \nabla u\Vert_{L^p(\Omega)} .$$
Therefore the semi-norm $\Vert .\Vert_{W^{1,p}_0(\Omega)}$, defined by $\Vert u\Vert_{W^{1,p}_0(\Omega)}:=\Vert \nabla u\Vert_{L^p(\Omega)}$, is a norm which is equivalent to $\Vert .\Vert_{W^{1,p}(\Omega)}$ on $W^{1,p}_0(\Omega)$.

Moreover, if $\Omega$ a bounded $n-$set with $W^1_p(\Omega)=C^1_p(\Omega)$, $1<p<+\infty$, for all $u\in W^{1,p}(\Omega)$ there exists $C>0$ depending only on $\Omega$, $p$ and $n$ such that
$$\left\Vert u-\frac{1}{\lambda(\Omega)}\int_{\Omega} u\;d\lambda\right\Vert_{L^p(\Omega)}\leq C \Vert \nabla u\Vert_{L^p(\Omega)} .$$
\end{theorem}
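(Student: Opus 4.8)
The plan is to prove the two inequalities separately, since the first holds for an arbitrary bounded domain while the second genuinely requires the Sobolev extension hypothesis.

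For the first inequality on $W^{1,p}_0(\Omega)$, I would exploit the fact that functions in $W^{1,p}_0(\Omega)$ extend by zero to functions in $W^{1,p}(\R^n)$, so that no regularity of $\del\Omega$ is needed. Since $\Omega$ is bounded, it lies in a slab $\{a<x_1<b\}$. First I would establish the estimate for $u\in C_c^\infty(\Omega)$: extending $u$ by zero, write $u(x)=\int_a^{x_1}\del_1 u(t,x')\,\dt$, apply Hölder's inequality in the variable $t$ over the interval of length $b-a$, raise to the power $p$, and integrate successively in $x_1$ and in $x'$. Since $\tfrac{p}{p'}+1=p$, this yields $\norm{u}_{L^p(\Omega)}\le (b-a)\norm{\del_1 u}_{L^p(\Omega)}\le (b-a)\norm{\nabla u}_{L^p(\Omega)}$, so one may take $C=b-a\le\operatorname{diam}(\Omega)$, which depends only on $\Omega$, $p$ and $n$; the case $p=1$ is the same computation without Hölder. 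As $C_c^\infty(\Omega)$ is dense in $W^{1,p}_0(\Omega)$, the estimate extends to the whole space by continuity. The norm equivalence is then immediate: $\norm{\nabla u}_{L^p(\Omega)}\le\norm{u}_{W^{1,p}(\Omega)}$ trivially, while $\norm{u}_{W^{1,p}(\Omega)}^p=\norm{u}_{L^p(\Omega)}^p+\norm{\nabla u}_{L^p(\Omega)}^p\le (C^p+1)\norm{\nabla u}_{L^p(\Omega)}^p$.

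For the second, mean-value inequality I would argue by contradiction and compactness. Suppose the estimate failed for every constant; then there would exist a sequence $(u_k)\subset W^{1,p}(\Omega)$ with $\norm{u_k-(u_k)_\Omega}_{L^p(\Omega)}>k\,\norm{\nabla u_k}_{L^p(\Omega)}$, where $(u_k)_\Omega:=\frac{1}{\lambda(\Omega)}\int_\Omega u_k\,d\lambda$. Setting $v_k:=\bigl(u_k-(u_k)_\Omega\bigr)/\norm{u_k-(u_k)_\Omega}_{L^p(\Omega)}$ gives $\norm{v_k}_{L^p(\Omega)}=1$, $(v_k)_\Omega=0$, and $\norm{\nabla v_k}_{L^p(\Omega)}<1/k\to 0$, so $(v_k)$ is bounded in $W^{1,p}(\Omega)$. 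Because $\Omega$ is a bounded $n$-set with $W^1_p(\Omega)=C^1_p(\Omega)$, Theorem~\ref{thmsobolembadm} provides the compact embedding $W^{1,p}(\Omega)\subset\subset L^p(\Omega)$ (with $k=1$ the exponent $p$ lies in the admissible range in each of the cases $p<n$, $p=n$, $p>n$). Since $1<p<\infty$, the space $W^{1,p}(\Omega)$ is reflexive, so after passing to a subsequence $v_k\rightharpoonup v$ weakly in $W^{1,p}(\Omega)$ and $v_k\to v$ strongly in $L^p(\Omega)$. Strong convergence yields $\norm{v}_{L^p(\Omega)}=1$ and $(v)_\Omega=0$, while $\nabla v$, being the weak $L^p$-limit of $\nabla v_k$, must vanish since $\nabla v_k\to 0$ in norm.

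The key structural point, and the main obstacle when $\del\Omega$ is irregular, is precisely this compactness step: for a general bounded domain no Rellich--Kondrachov theorem is available, which is exactly why the mean-value inequality needs the extension hypothesis $W^1_p(\Omega)=C^1_p(\Omega)$, whereas the first inequality needed nothing, the zero extension playing the role of the extension operator. To finish, since $\Omega$ is a domain, hence connected, the relation $\nabla v=0$ forces $v$ to be a.e. constant on $\Omega$; combined with $(v)_\Omega=0$ this constant must be zero, contradicting $\norm{v}_{L^p(\Omega)}=1$. This contradiction establishes the existence of the claimed constant $C$.
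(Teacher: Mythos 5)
Your proof is correct and takes essentially the same route as the paper: the paper derives the $W^{1,p}_0$ inequality from the boundedness of $\Omega$ (your slab/zero-extension computation is the standard way to do this) and the mean-value inequality from the compact embedding $W^{1,p}(\Omega)\subset\subset L^p(\Omega)$ of Theorem~\ref{thmsobolembadm} combined with the compactness-contradiction argument of Evans (Section~5.8.1, Theorem~1), which is exactly your second part. You have merely written out in full the details the paper delegates to references, including the correct observation that $q=p$ lies in the admissible range of the embedding theorem for each case $p<n$, $p=n$, $p>n$.
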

\begin{proof}
The result for $u\in W^{1,p}_0(\Omega)$ comes from the boundness of $\Omega$. The result for $u\in W^{1,p}(\Omega)$ comes from the compactness of the embedding $W^{1,p}(\Omega)\subset\subset L^p(\Omega)$ from Theorem~\ref{thmsobolembadm} and following for instance the proof in Ref.~\cite{EVANS-1994} (see section 5.8.1 Theorem 1).
\end{proof}
\section{Regularity results on arbitrary domains for the Poisson equation.}\label{secpoissdir}
In a way, the theorems coming from Ref.~\cite{ARFI-2017} show that the functions space on admissible domain share a lot of property with the same function spaces considered on regular domains or  domains with Lipschitz boundary. Nevertheless difference on the regularity of solutions occur when we consider partial differential equation even as simple as the Poisson equation on admissible domains. Using the results from Ref.~\cite{ARFI-2017} extending the results from Ref.~\cite{JONSSON-1997} on $(\varepsilon,\delta)-$domains,  we have the following well-posedness result for the Laplace equation.
\begin{theorem}\label{WeakSolPoiss}
Let $\Omega$ be a bounded Sobolev admissible domain in $\mathbb{R}^n$ ($n\geq2$) in the sense of~Refs.\cite{DEKKERS-2020,ROZANOVA-PIERRAT-2020}:
it is a Sobolev extension domain with a boundary defined by the support of a Borel measure satisfying~(\ref{EqMeas}) with $n-1\le d\le s<n$. %
Then for all  given $f\in L^2(\Omega)$ and $g\in \hat{B}^{2,2}_{1}(\partial \Omega)$ the Poisson problem~(\ref{PoissonDir1})
has a unique weak solution $u\in H^1(\Omega)$ in the following sense 
$$\forall v\in H^1_0(\Omega),\hbox{ } \int_{\Omega}\nabla u\nabla v=\int_{\Omega} f v \quad \hbox{and}\quad %
Tr_{\partial\Omega}u =g.$$
Furthermore, the mapping $\lbrace f,g\rbrace\mapsto u$ is a bounded linear operator from $L^2(\Omega)\times \hat{B}^{2,2}_{1}(\partial \Omega)$ to $H^1(\Omega)$.
\end{theorem}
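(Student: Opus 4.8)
The plan is to reduce the nonhomogeneous problem to a homogeneous one by lifting the boundary datum and then to apply the Riesz representation theorem on $H^1_0(\Omega)$, exactly as in the homogeneous variational problem~\eqref{EqWeekDirPois}. The first step is to produce an $H^1$-lifting of $g$. Since $\Omega$ is a Sobolev extension domain whose boundary carries a Borel measure satisfying~\eqref{EqMeas} with $n-1\le d\le s<n$, Theorem~1 of Ref.~\cite{JONSSON-1994} (recalled in Section~\ref{secfirstresult}) furnishes a linear continuous trace operator $Tr_{\partial\Omega}\colon H^1(\Omega)\to \hat{B}^{2,2}_1(\partial\Omega)$ together with a bounded linear right inverse (extension) $E_{\partial\Omega}\colon \hat{B}^{2,2}_1(\partial\Omega)\to H^1(\Omega)$ with $Tr_{\partial\Omega}(E_{\partial\Omega} h)=h$. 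Setting $g^*:=E_{\partial\Omega} g\in H^1(\Omega)$, one gets $Tr_{\partial\Omega} g^*=g$ and $\|g^*\|_{H^1(\Omega)}\le C\|g\|_{\hat{B}^{2,2}_1(\partial\Omega)}$ with $C$ depending only on $\Omega$.

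Next I would solve for the correction $w:=u-g^*$, which must lie in $H^1_0(\Omega)=\ker Tr_{\partial\Omega}$ and satisfy, for all $v\in H^1_0(\Omega)$,
$$\int_\Omega \nabla w\,\nabla v\,\dx=\int_\Omega fv\,\dx-\int_\Omega \nabla g^*\,\nabla v\,\dx=:L(v).$$
The functional $L$ is bounded on $H^1_0(\Omega)$: by Cauchy--Schwarz and the Poincar\'e inequality (Theorem~\ref{inegPoinc}), $|L(v)|\le (C\|f\|_{L^2(\Omega)}+\|\nabla g^*\|_{L^2(\Omega)})\,\|\nabla v\|_{L^2(\Omega)}$. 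Since $\|\nabla\,\cdot\,\|_{L^2(\Omega)}$ is an equivalent Hilbert norm on $H^1_0(\Omega)$ (again Poincar\'e), and the bilinear form $(w,v)\mapsto\int_\Omega\nabla w\,\nabla v\,\dx$ is the associated inner product, the Riesz representation theorem yields a unique $w\in H^1_0(\Omega)$ solving the identity above, with $\|\nabla w\|_{L^2(\Omega)}\le C\|f\|_{L^2(\Omega)}+\|\nabla g^*\|_{L^2(\Omega)}$. Then $u:=w+g^*\in H^1(\Omega)$ satisfies $\int_\Omega\nabla u\,\nabla v\,\dx=\int_\Omega fv\,\dx$ for every $v\in H^1_0(\Omega)$ and $Tr_{\partial\Omega}u=Tr_{\partial\Omega}w+Tr_{\partial\Omega}g^*=g$, so $u$ is a weak solution.

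For uniqueness, if $u_1,u_2$ are two weak solutions then $u_1-u_2\in\ker Tr_{\partial\Omega}=H^1_0(\Omega)$ and $\int_\Omega\nabla(u_1-u_2)\,\nabla v\,\dx=0$ for all $v\in H^1_0(\Omega)$; the choice $v=u_1-u_2$ together with Poincar\'e forces $u_1=u_2$. Linearity of the map $\{f,g\}\mapsto u$ is inherited from the linearity of $E_{\partial\Omega}$ and of the Riesz isomorphism, and combining the two estimates above with $\|g^*\|_{H^1(\Omega)}\le C\|g\|_{\hat{B}^{2,2}_1(\partial\Omega)}$ gives $\|u\|_{H^1(\Omega)}\le C(\|f\|_{L^2(\Omega)}+\|g\|_{\hat{B}^{2,2}_1(\partial\Omega)})$, which is the asserted boundedness of the solution operator.

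The main obstacle is concentrated entirely in the first step: the existence of a \emph{bounded right inverse} of the trace on the generalized Besov space $\hat{B}^{2,2}_1(\partial\Omega)$, and the identification $H^1_0(\Omega)=\ker Tr_{\partial\Omega}$, are the delicate facts that rely on $\Omega$ being a Sobolev extension domain with a boundary of the measure-theoretic type~\eqref{EqMeas}; it is here that the geometry enters and where the framework of Ref.~\cite{JONSSON-1994} must be invoked in the precise range $n-1\le d\le s<n$. Once these are granted, the rest is the classical symmetric variational argument and introduces no further geometric difficulty.
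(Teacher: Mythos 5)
Your proposal is correct and takes essentially the same approach as the paper: lift $g$ to $\overline{g}=E_{\partial\Omega}g\in H^1(\Omega)$ via the bounded right inverse of the trace, then solve for the correction $w\in H^1_0(\Omega)$ by the symmetric variational argument (your Riesz representation step is interchangeable with the paper's Lax--Milgram application, since the bilinear form is exactly the $H^1_0$ inner product under Poincar\'e), and combine the resulting estimates for the boundedness of $\lbrace f,g\rbrace\mapsto u$. If anything, your explicit handling of uniqueness through the identification $\ker Tr_{\partial\Omega}=H^1_0(\Omega)$ is slightly more careful than the paper's proof, which only asserts uniqueness of $w$.
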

\begin{proof}
First we use the right inverse of the trace  operator $E_{\partial\Omega}: \hat{B}^{2,2}_{1}(\del \Omega) \to H^1(\Omega)$,  which is bounded~\cite{DEKKERS-2020,ROZANOVA-PIERRAT-2020}, to obtain $\overline{g}\in H^1(\Omega)$ such that $Tr_{\partial\Omega}\overline{g} =g$ and $\Vert \overline{g}\Vert_{H^1(\Omega)}\leq C \Vert g\Vert_{B^{2,2}_{\beta}(\partial \Omega)}$ with $C>0$.
Then the desired solution $u$ of the Poisson problem~(\ref{PoissonDir1}) is defined as $u=w+ \overline{g}$, where $w\in H^1_0(\Omega)$ satisfies
$$\forall v\in H^1_0(\Omega)\hbox{ } \int_{\Omega}\nabla w\nabla v=\int_{\Omega} f v-\int_{\Omega} \nabla \overline{g}\nabla v.$$
The application of  the Lax-Milgram theorem with the Poincar\'e inequality implies the existence and uniqueness of a such $w$  and we can easily deduce the boundness of the mapping $\lbrace f,g\rbrace\mapsto u$ thanks to the estimate
$$ \Vert w\Vert_{H^1(\Omega)} \leq C_1\Vert \nabla w\Vert_{L^2(\Omega)}\leq C_2 \Vert f \Vert_{L^2(\Omega)}+\Vert\nabla \overline{g}  \Vert_{L^2(\Omega)},$$ 
with  constants $C_1>0$ and $C_2>0$.
\end{proof}

Let us also notice that as $-\Delta$ with homogeneous Dirichlet data is a symmetric elliptic operator.  Since on arbitrary domain  the embedding of $H^1_0(\Omega)$ into $L^2(\Omega)$ is  compact,  thanks to Ref.~\cite{EVANS-1994} (Section 6.5.1 p. 334) the spectral problem for $-\Delta$  can be posed on an arbitrary bounded domain to the usual properties:
to have at most countable number of eigenvalues, of finite multiplicity and strictly positive, associated with eigenfunctions forming an orthonormal basis of $L^2(\Omega)$.
As it was mentioned in the Introduction, the regularity of the source $f$ implies~\cite{EVANS-1994} the interior regularity of the weak solution of the Poisson problem $u$ independently of the shape of $\del \Omega$. The question of global regularity up to the boundary is solved for regular boundaries~\cite{EVANS-1994} $\partial\Omega$  of class  $C^{m+2}$: if $f\in H^m(\Omega)$ for $m\in \mathbb{N}$, then the solution of the homogeneous Dirichlet Poisson problem $u\in H^{m+2}(\Omega)\cap H^1_0(\Omega)$ and satisfies
 the estimate
$$
\Vert u\Vert_{H^{m+2}(\Omega)}\leq C \Vert f\Vert_{H^m(\Omega)}
$$
with the constant $C>0$ depending only on $m,$ $\Omega$.
The work of Grisvard ~\cite{Grisvard} tells us that in dimension $n=2$ this result is also true for convex polygonal domains.
But~\cite{Nystrom} this is no longer true for domains with a fractal boundary even if the data $f$ is very regular. In the general case we never have the weak solutions from $H^2(\Omega)$, but only from $H^2_{loc}(\Omega)$. 

However a weaker regularity property  $u\in C(\Omega)\cap L^{\infty}(\Omega)$ can be still considered. %
In dimension $n=3$ we have the next theorem:
\begin{theorem}\label{thmXie}[\cite{Xie}]
Let $\Omega$ be an arbitrary open set in $\mathbb{R}^3$. If $u\in H^1_0(\Omega)$ and $\Delta u\in L^2(\Omega)$, then
$$\Vert u\Vert_{L^{\infty}(\Omega)}\leq\frac{1}{\sqrt{2\pi}}\Vert \nabla u\Vert_{L^2(\Omega)}^{1/2} \Vert \Delta u\Vert_{L^2(\Omega)}^{1/2}.$$
The constant $\frac{1}{\sqrt{2\pi}}$ is the best possible for all $\Omega$.
\end{theorem}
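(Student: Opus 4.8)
The plan is to obtain the estimate first on $\R^3$ by a Fourier computation that already produces the sharp constant, and then to transfer it to an arbitrary open $\Omega$ through the spectral theory of the Dirichlet Laplacian. For $u\in C_c^\infty(\R^3)$ I would start from $|u(x)|\le(2\pi)^{-3}\int_{\R^3}|\hat u(\xi)|\,d\xi$ and apply the Cauchy--Schwarz inequality with the multiplier $a|\xi|^2+b|\xi|^4$, where $a,b>0$ are free parameters:
\begin{equation*}
\int_{\R^3}|\hat u|\,d\xi\le\Big(\int_{\R^3}\frac{d\xi}{a|\xi|^2+b|\xi|^4}\Big)^{1/2}\Big(\int_{\R^3}(a|\xi|^2+b|\xi|^4)|\hat u|^2\,d\xi\Big)^{1/2}.
\end{equation*}
The first integral is radial and reduces to $\int_0^\infty\frac{ds}{a+bs^2}=\frac{\pi}{2\sqrt{ab}}$, giving $\frac{2\pi^2}{\sqrt{ab}}$; by Plancherel the second equals $(2\pi)^3(a\Vert\nabla u\Vert_{L^2}^2+b\Vert\Delta u\Vert_{L^2}^2)$. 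Collecting constants yields $|u(x)|\le\frac{1}{2\sqrt\pi}(ab)^{-1/4}(a\Vert\nabla u\Vert_{L^2}^2+b\Vert\Delta u\Vert_{L^2}^2)^{1/2}$, and minimising over $a,b$ (the optimum balances $a\Vert\nabla u\Vert_{L^2}^2=b\Vert\Delta u\Vert_{L^2}^2$) produces exactly the constant $\frac{1}{\sqrt{2\pi}}$.

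The delicate point, and the step I expect to be the main obstacle, is passing from $C_c^\infty(\R^3)$ to $u\in H^1_0(\Omega)$ with $\Delta u\in L^2(\Omega)$ on an \emph{arbitrary} open set. One cannot simply extend $u$ by zero: the extension lies in $H^1(\R^3)$ but its distributional Laplacian acquires a singular layer on $\partial\Omega$, so $\Delta\widetilde u\notin L^2(\R^3)$ in general, and since $C_c^\infty(\Omega)$ is not a core for the Dirichlet Laplacian, mollification cannot control $\Vert\Delta u\Vert_{L^2}$. I would bypass this by working spectrally with the self-adjoint Dirichlet Laplacian $-\Delta_D$ associated with the form $\int_\Omega\nabla u\cdot\nabla v$ (cf.\ the spectral discussion after Theorem~\ref{WeakSolPoiss}), for which $u\in D(-\Delta_D)$, $\Vert\nabla u\Vert_{L^2}^2=\langle-\Delta_D u,u\rangle$ and $\Vert\Delta u\Vert_{L^2}=\Vert\Delta_D u\Vert_{L^2}$. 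Setting $A=a(-\Delta_D)+b(-\Delta_D)^2$ one gets $\Vert A^{1/2}u\Vert^2=a\Vert\nabla u\Vert_{L^2}^2+b\Vert\Delta u\Vert_{L^2}^2$, and writing $u=A^{-1/2}(A^{1/2}u)$ the Cauchy--Schwarz inequality gives the pointwise bound $|u(x)|\le A^{-1}(x,x)^{1/2}\,\Vert A^{1/2}u\Vert$, where $A^{-1}(x,\cdot)$ is the integral kernel of $A^{-1}$.

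It then remains to dominate the diagonal $A^{-1}(x,x)$ by its whole-space value. The multiplier $m(\lambda)=(a\lambda+b\lambda^2)^{-1}$ is completely monotone: partial fractions give $m(\lambda)=\int_0^\infty\nu(t)e^{-t\lambda}\,dt$ with $\nu(t)=a^{-1}(1-e^{-at/b})\ge0$, so $A^{-1}=\int_0^\infty\nu(t)e^{t\Delta_D}\,dt$ has the nonnegative kernel $A^{-1}(x,y)=\int_0^\infty\nu(t)p_t^\Omega(x,y)\,dt$. By domain monotonicity of the Dirichlet heat kernel, $p_t^\Omega\le p_t^{\R^3}$, whence
\begin{equation*}
A^{-1}(x,x)\le\int_0^\infty\nu(t)\,p_t^{\R^3}(x,x)\,dt=(2\pi)^{-3}\int_{\R^3}\frac{d\xi}{a|\xi|^2+b|\xi|^4}=\frac{1}{4\pi\sqrt{ab}}.
\end{equation*}
Inserting this into the previous bound reproduces exactly the whole-space estimate, and optimising over $a,b$ gives $|u(x)|\le\frac{1}{\sqrt{2\pi}}\Vert\nabla u\Vert_{L^2}^{1/2}\Vert\Delta u\Vert_{L^2}^{1/2}$ for every open $\Omega\subset\R^3$; the heat-kernel comparison is precisely what lets the arbitrary boundary play no role.

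Optimality of $\frac{1}{\sqrt{2\pi}}$ would be read off from $\Omega=\R^3$, where $p_t^\Omega=p_t^{\R^3}$ makes the domination an equality and both Cauchy--Schwarz steps are simultaneously saturated by the radial profile with $\hat u(\xi)\propto(a|\xi|^2+b|\xi|^4)^{-1}$ at the optimal ratio $a/b=\Vert\Delta u\Vert_{L^2}^2/\Vert\nabla u\Vert_{L^2}^2$; concentrating a rescaled copy of it near an interior point of a fixed $\Omega$ shows no smaller constant is admissible. The only auxiliary facts needing care are the finiteness of $A^{-1}(x,x)$ (immediate from $\nu(t)\sim t/b$ as $t\to0$ and $p_t^{\R^3}(x,x)=(4\pi t)^{-3/2}$, both integrable in $t$) and the legitimacy of the pairing $u(x)=\langle A^{1/2}u,A^{-1/2}\delta_x\rangle$, which holds exactly because this diagonal is finite and $u$ is continuous on $\Omega$ by interior regularity.
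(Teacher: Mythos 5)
Your proposal is correct in substance, but note first that the paper itself does not prove this statement: Theorem~\ref{thmXie} is imported verbatim from Ref.~\cite{Xie}, so there is no internal proof to compare against, and what you have produced is a genuine self-contained argument. Your route is the natural functional-calculus one: the Fourier computation on $\R^3$ with the multiplier $a|\xi|^2+b|\xi|^4$ is the standard way to generate the sharp constant (your intermediate values $\frac{2\pi^2}{\sqrt{ab}}$, $\frac{1}{2\sqrt{\pi}}(ab)^{-1/4}$, and the balancing condition $a\Vert\nabla u\Vert_{L^2}^2=b\Vert\Delta u\Vert_{L^2}^2$ all check out), and the transfer to arbitrary open sets via $A=a(-\Delta_D)+b(-\Delta_D)^2$, the Bernstein representation $\nu(t)=a^{-1}(1-e^{-at/b})$, and domain monotonicity $p_t^\Omega\le p_t^{\R^3}$ is exactly the mechanism that makes the boundary irrelevant; the diagonal identity $\int_0^\infty \nu(t)(4\pi t)^{-3/2}\,dt=\frac{1}{4\pi\sqrt{ab}}$ is verified by $\int_0^\infty t^{-3/2}(1-e^{-ct})\,dt=2\sqrt{\pi c}$. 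You are also right, and it is a point worth having made explicit, that zero-extension fails ($\Delta\widetilde u$ acquires a layer on $\partial\Omega$) and that $C_c^\infty(\Omega)$ is not an operator core for $-\Delta_D$ on a general bounded domain (test against harmonic $L^2$ functions to see this), so the heat-kernel comparison is genuinely needed; likewise the hypotheses $u\in H^1_0(\Omega)$, $\Delta u\in L^2(\Omega)$ do characterize $D(-\Delta_D)$ as in Definition~\ref{DefOpA}, so $\Vert\nabla u\Vert_{L^2}^2=\langle -\Delta_D u,u\rangle$ and $\Vert\Delta_D u\Vert_{L^2}=\Vert\Delta u\Vert_{L^2}$ are legitimate.

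Two small points deserve tightening. First, the sharpness discussion as written suggests an admissible extremizer, but the profile $\hat u(\xi)\propto(a|\xi|^2+b|\xi|^4)^{-1}$ has $|\hat u(\xi)|^2\sim|\xi|^{-4}$ near the origin, so $u\notin L^2(\R^3)$ and equality is never attained in $H^1_0$; the constant is only approached along a sequence (e.g.\ low-frequency truncations), and for a fixed $\Omega$ one then uses compactly supported cutoffs rescaled into a ball inside $\Omega$, which works precisely because the inequality is invariant under $u\mapsto u(\cdot/\lambda)$. Second, the pairing $|u(x)|\le A^{-1}(x,x)^{1/2}\Vert A^{1/2}u\Vert_{L^2}$ should be phrased through the integral operator: $A^{-1/2}=\int_0^\infty e^{t\Delta_D}\,d\mu(t)$ with $\mu\ge0$ (the multiplier $(a\lambda+b\lambda^2)^{-1/2}$ is completely monotone) has a nonnegative kernel $k(x,y)$ with $\int_\Omega k(x,y)^2\,dy=A^{-1}(x,x)<\infty$ by the semigroup property, and identifying the spectral operator $A^{-1/2}v$ with this integral operator a.e.\ requires the routine regularization you allude to; with that, the bound holds for a.e.\ $x$, which combined with the interior continuity of $u$ gives the $L^\infty$ statement. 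Neither issue affects the validity of the argument.
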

Using Theorems~\ref{WeakSolPoiss}  and~\ref{thmXie} with the interior regularity results of Evans we deduce the following result:
\begin{corollary}\label{PoissonR3}
Let $\Omega$ be a bounded arbitrary domain in $\mathbb{R}^3$. Assume $f\in L^2(\Omega)$. Suppose furthermore that $u\in H^1_0(\Omega)$ is the weak solution of the Poisson problem (\ref{PoissonDir1}) with $g=0$.
Then 
$$u\in C(\Omega)\cap L^{\infty}(\Omega) $$
and it holds  estimate~(\ref{EqCREch2})
with the constant $C>0$ depending only on the Lebesgue measure of $\Omega$ denoted by $\lambda(\Omega)$.
\end{corollary}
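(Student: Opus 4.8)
The plan is to combine the existence and uniqueness provided by Theorem~\ref{WeakSolPoiss} (with $g=0$) with the sharp $L^\infty$-bound of Xie (Theorem~\ref{thmXie}); the only delicate point is to arrange that the resulting constant depends on $\Omega$ solely through its volume $\lambda(\Omega)$. First I would note that the weak formulation with $g=0$ asserts precisely $-\Delta u = f$ in $\mathcal{D}'(\Omega)$, so that $\Delta u\in L^2(\Omega)$ with $\Vert \Delta u\Vert_{L^2(\Omega)}=\Vert f\Vert_{L^2(\Omega)}$. Thus $u$ satisfies the hypotheses of Theorem~\ref{thmXie}, which yields
$$\Vert u\Vert_{L^\infty(\Omega)}\le \frac{1}{\sqrt{2\pi}}\,\Vert \nabla u\Vert_{L^2(\Omega)}^{1/2}\,\Vert f\Vert_{L^2(\Omega)}^{1/2}.$$
It then remains to control $\Vert\nabla u\Vert_{L^2(\Omega)}$ by $\Vert f\Vert_{L^2(\Omega)}$ with a constant governed by $\lambda(\Omega)$.

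Next, testing the weak formulation against $v=u\in H^1_0(\Omega)$ and applying the Cauchy--Schwarz inequality gives $\Vert\nabla u\Vert_{L^2(\Omega)}^2=\int_\Omega fu\le \Vert f\Vert_{L^2(\Omega)}\Vert u\Vert_{L^2(\Omega)}$, so it suffices to bound $\Vert u\Vert_{L^2(\Omega)}$ by a multiple of $\Vert\nabla u\Vert_{L^2(\Omega)}$ with a volume-controlled Poincar\'e constant. The key idea is to route this estimate through the flat space rather than through Theorem~\ref{inegPoinc}, whose constant a priori sees the shape of $\partial\Omega$. I would extend $u$ by zero to all of $\mathbb{R}^3$ (legitimate since $u\in H^1_0(\Omega)$) and invoke the dimensional Sobolev embedding $H^1(\mathbb{R}^3)\hookrightarrow L^6(\mathbb{R}^3)$, whose constant $C_S$ is universal. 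H\"older's inequality with $\tfrac12=\tfrac16+\tfrac13$ then gives
$$\Vert u\Vert_{L^2(\Omega)}\le \lambda(\Omega)^{1/3}\Vert u\Vert_{L^6(\Omega)}\le C_S\,\lambda(\Omega)^{1/3}\Vert\nabla u\Vert_{L^2(\Omega)},$$
whence $\Vert\nabla u\Vert_{L^2(\Omega)}\le C_S\,\lambda(\Omega)^{1/3}\Vert f\Vert_{L^2(\Omega)}$. Substituting this into the Xie bound produces estimate~(\ref{EqCREch2}) with $C=(2\pi)^{-1/2}C_S^{1/2}\lambda(\Omega)^{1/6}$, which depends only on $\lambda(\Omega)$ as claimed.

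Finally, for the continuity assertion $u\in C(\Omega)$ I would appeal to the interior regularity of Evans~\cite{EVANS-1994}: since $f\in L^2(\Omega)$, the weak solution lies in $H^2_{\mathrm{loc}}(\Omega)$, and on each $V\subset\subset\Omega$ the Sobolev embedding $H^2(V)\hookrightarrow C(\overline V)$, valid in dimension three because $2>\tfrac32=\tfrac{n}{2}$, upgrades this to $u\in C(\Omega)$; together with the bound just obtained this gives $u\in C(\Omega)\cap L^\infty(\Omega)$. The main obstacle in the whole argument is precisely the geometry-independence of the Poincar\'e constant, and it is resolved by the zero-extension trick above, which replaces any shape-dependent constant by the universal Sobolev constant of $\mathbb{R}^3$ modulated only by the volume factor $\lambda(\Omega)^{1/3}$ coming from H\"older's inequality.
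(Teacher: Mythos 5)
Your proof is correct and follows the same skeleton as the paper's: interior $H^2_{loc}$ regularity from Evans for the continuity statement, an energy estimate for $\Vert\nabla u\Vert_{L^2(\Omega)}$, and Xie's inequality (Theorem~\ref{thmXie}) applied with $\Vert\Delta u\Vert_{L^2(\Omega)}=\Vert f\Vert_{L^2(\Omega)}$ to conclude. Where you genuinely depart from the paper is in the justification of the volume-only dependence of the constant: the paper's proof simply invokes the standard stability estimate from Lax--Milgram ``with the help of the Poincar\'e inequality'' and asserts that the constant depends only on $\lambda(\Omega)$, but its own Poincar\'e inequality (Theorem~\ref{inegPoinc}) is stated with a constant depending on $\Omega$, $p$ and $n$ without specifying that only the measure enters, so the reader must supply the missing argument. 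Your zero-extension trick closes exactly this gap: extending $u\in H^1_0(\Omega)$ by zero to $H^1(\mathbb{R}^3)$, using the shape-independent inequality $\Vert u\Vert_{L^6(\mathbb{R}^3)}\le C_S\Vert\nabla u\Vert_{L^2(\mathbb{R}^3)}$, and applying H\"older with $\tfrac12=\tfrac16+\tfrac13$ injects precisely the factor $\lambda(\Omega)^{1/3}$, and even yields the explicit constant $C=(2\pi)^{-1/2}C_S^{1/2}\lambda(\Omega)^{1/6}$, which the paper does not provide. One small point of precision: the inhomogeneous embedding $H^1(\mathbb{R}^3)\hookrightarrow L^6(\mathbb{R}^3)$ as usually stated controls $\Vert u\Vert_{L^6}$ by the full $H^1$ norm; what you actually need and use is the homogeneous Gagliardo--Nirenberg--Sobolev inequality $\Vert u\Vert_{L^6}\le C_S\Vert\nabla u\Vert_{L^2}$, which does hold for all of $H^1(\mathbb{R}^3)$ by density from $C_c^\infty(\mathbb{R}^3)$, so the step stands once named correctly. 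Your explicit verification of $u\in C(\Omega)$ via $H^2(V)\hookrightarrow C(\overline V)$ on each $V\subset\subset\Omega$ (valid since $2>\tfrac32$) is likewise a detail the paper leaves implicit. In short: same strategy as the paper, but your version makes rigorous the constant's dependence on $\lambda(\Omega)$ alone, which the paper's proof glosses over.
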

\begin{proof}
On  a bounded arbitrary domain $\Omega$ for $f\in L^2(\Omega)$ we have~\cite{EVANS-1994} 
$u\in H^2_{loc}(\Omega)\cap H^1_0(\Omega)$. By the standard stability estimate coming from the Lax-Milgram theorem with the help of the Poincar\'e inequality,  we find
$$\Vert \nabla u\Vert_{L^2(\Omega)}\leq C \Vert f\Vert_{L^2(\Omega)}=C\Vert \Delta u\Vert_{L^2(\Omega)}$$
with a constant $C>0$ depending only on $\lambda(\Omega)$.
Then we use the estimate of Theorem~\ref{thmXie} to conclude.

\end{proof}
 For a similar result in dimension $n=2$ we use  Ref.~\cite{Nystromthese}.
For $f\in L^2(\Omega)$ we denote by $Gf\in H^1_0(\Omega)$ the Green potential solution of the Poisson problem~(\ref{PoissonDir1}) with $g=0$. Thanks to  Ref.~\cite{Nystrom} we have  the following theorem in which we precise the dependence on $\Omega$ of the constants in the estimate of Nystr\"om (see~\ref{append1} for the proof).
\begin{theorem}\label{preqthmNys}
Let $\Omega\subset \mathbb{R}^n$ with $n\geq 2$ a bounded and simply connected NTA domain. Let $q_0=1+\frac{1}{1-\beta(M)}>2$. Then there exist constants $\epsilon=\epsilon(\Omega)$ and 
$$C=C(M,n,q,r_0,\delta,\operatorname{dimloc}(\partial\Omega),diam(\Omega))$$
such that if $\frac{n}{n-1}<q<q_0+\varepsilon$, $\frac{1}{q}=\frac{1}{p}-\frac{1}{n}$, then the following inequality is valid for all $f\in L^p(\Omega)$,
$$\left(\int_{\Omega}\left\vert\frac{Gf(x)}{d(x,\partial\Omega)}\right\vert^q dx\right)^{\frac{1}{q}}\leq C \Vert f\Vert_{L^p(\Omega)}=C \Vert\Delta  Gf\Vert_{L^p(\Omega)}.
$$
\end{theorem}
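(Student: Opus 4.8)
Write $u=Gf$, so that $u\in H^1_0(\Omega)$ solves $-\Delta u=f$; the left-hand side quantifies the decay of $u$ at $\del\Omega$ measured against the distance $d(\cdot,\del\Omega)$. Since the estimate is essentially Nyström's, the genuine content of the theorem is the explicit constant dependence, and the cleanest way to organise the argument is to factor the weighted bound into two steps whose constants can be read off separately: a Hardy inequality converting $u/d$ into $\nabla u$, and a Hardy--Littlewood--Sobolev (Riesz potential) bound for the Green potential. The plan is to establish each step with a constant dictated only by the geometric data $M$, $r_0$, $n$, $diam(\Omega)$ and $\operatorname{dimloc}(\del\Omega)$, and then to chain them.

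First I would prove the gradient bound
$$\norm{\nabla u}_{L^q(\Omega)}\le C\norm{f}_{L^p(\Omega)},\qquad \frac1q=\frac1p-\frac1n.$$
Because $G(x,y)\sim\abs{x-y}^{2-n}$ in the interior, $\nabla_x Gf$ behaves like the Riesz potential $I_1 f$, so the Hardy--Littlewood--Sobolev inequality produces exactly the Sobolev-conjugate relation $\frac1q=\frac1p-\frac1n$ and forces the lower cut-off $q>\frac{n}{n-1}$, which is merely the requirement $p>1$ under which HLS holds. The new point, and the reason for the upper cut-off $q<q_0+\eps$, is the boundary behaviour: by the boundary Harnack principle on an NTA domain, a solution vanishing on $\del\Omega$ decays there at a Hölder rate governed by the exponent $\beta(M)\in(0,1)$, and the integrability of $u/d$ in a boundary layer is exactly what caps $q$ at $q_0=1+\frac1{1-\beta(M)}$. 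The perturbation $\eps=\eps(\Omega)$ reflects the open-endedness (self-improvement) of the higher-integrability exponent supplied by a reverse-Hölder argument.

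Second I would apply the Hardy inequality $\norm{u/d}_{L^q(\Omega)}\le C\norm{\nabla u}_{L^q(\Omega)}$ valid for $u\in W^{1,q}_0(\Omega)$, whose validity with a controlled constant follows from the corkscrew and Harnack chain conditions of Definition~\ref{defNTA}: an NTA boundary is uniformly nondegenerate, so the Hardy constant depends only on $M$, $r_0$, $\operatorname{dimloc}(\del\Omega)$, $diam(\Omega)$ and $n$. Chaining the two inequalities yields the asserted bound, and inspecting both steps shows the constant is of the claimed form $C=C(M,n,q,r_0,\delta,\operatorname{dimloc}(\del\Omega),diam(\Omega))$, with the identity $\norm{f}_{L^p(\Omega)}=\norm{\Delta Gf}_{L^p(\Omega)}$ being immediate from $-\Delta u=f$.

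The main obstacle is not the inequality itself but the careful bookkeeping of the constants through the boundary Harnack principle, so that nothing depends on the finer shape of $\del\Omega$. I would carry this out by taking a Whitney decomposition of $\Omega$, proving the local version of each inequality on a single Whitney cube (where $d$ is comparable to the sidelength, so interior elliptic regularity applies with a universal constant), and then summing the local estimates along Harnack chains by a discrete Hardy inequality; at every step the constants are governed only by the listed geometric quantities. This uniformity is precisely what is needed later, since for a sequence of NTA domains sharing the same $M$ and $r_0$ it guarantees a uniformly bounded sequence of solutions, as required for the Mosco convergence argument.
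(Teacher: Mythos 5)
Your architecture --- first prove the gradient bound $\Vert \nabla Gf\Vert_{L^q(\Omega)}\leq C\Vert f\Vert_{L^p(\Omega)}$, then conclude via the $L^q$ Hardy inequality on NTA domains --- is viable in principle (the exterior corkscrew condition does make the complement uniformly fat, so a $q$-Hardy inequality with constant depending only on $M$, $n$, $q$ holds for every $q>1$, granting the side issue of checking $Gf\in W^{1,q}_0(\Omega)$ on a rough domain), but as written the proposal contains a genuine circularity and displaces, rather than supplies, the content of the theorem. All of the difficulty --- the upper cutoff $q<q_0+\epsilon$ with $q_0=1+\frac{1}{1-\beta(M)}$ and a constant depending only on $(M,n,q,r_0,\delta,\operatorname{dimloc}(\partial\Omega),diam(\Omega))$ --- is relocated into the gradient bound, which is exactly Theorem~\ref{thmNystrom}, Nystr\"om's main theorem; in this paper that statement is a parallel result established ``in the same way'' as the present one, not an available ingredient. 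Moreover your sketch of the gradient bound is circular at the decisive point: you justify the cap on $q$ by ``the integrability of $u/d$ in a boundary layer,'' i.e.\ by the very weighted quantity $\Vert Gf/d\Vert_{L^q(\Omega)}$ the theorem estimates, which you then propose to recover from $\Vert\nabla Gf\Vert_{L^q(\Omega)}$ by Hardy. To break the loop you need the pointwise Green function decay of Lemma~\ref{lemNystrom}, $G(x,y)\leq C(M,n)\,d(y,\partial\Omega)^{\beta}\,r^{2-n-\beta}\,w(x,\Gamma(Q_0,r),\Omega)$, which is the actual engine behind both estimates and which your outline never states; nor does the outline indicate where $\delta$ and $\operatorname{dimloc}(\partial\Omega)$ enter the constant, although they must (they come from the covering scale and from the boundary-layer counting on a possibly fractal boundary).

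The paper's appendix proof is structured quite differently and never touches $\nabla Gf$: it covers $\partial\Omega$ by $N\leq K_3(\delta,M,n,diam(\Omega))$ cubes $Q_k$, splits $f=h_k+g_k$ into near and far parts on each, handles the interior region with Widman's crude pointwise bound $0\leq G(x,y)\leq C(n,\lambda(\Omega))\vert x-y\vert^{1-n}$ --- so that $Gf\lesssim I_1 f$ and Hardy--Littlewood--Sobolev gives the exponent relation $\frac{1}{q}=\frac{1}{p}-\frac{1}{n}$ directly, with no Hardy inequality because $1/d$ is bounded there --- reduces the far contributions $Gg_k$ to $\Vert f\Vert_{L^1}$ through the boundary-Harnack-type estimate~(\ref{eqproofNys1}) at a corkscrew point, and invokes Nystr\"om's localized Lemma~10.1 only for the near parts $Gh_k$, which is precisely where $q_0+\epsilon$ and $\operatorname{dimloc}(\partial\Omega)$ live. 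Note finally that your reduction runs against the easy direction of implication: given the weighted bound, the gradient bound follows cheaply from interior elliptic estimates on Whitney cubes (where $d\sim\ell(Q)$), which is why the weighted estimate is proved first both here and in Nystr\"om's work; deriving it backwards through Hardy forces you to re-prove the harder theorem from scratch, and the proposal does not actually do so.
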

\begin{remark}
Here $M$ and $r_0$ come from  Definition~\ref{defNTA} of the NTA domain and $\delta$ is a fixed constant such that $\delta<<r_0$. We have $\operatorname{dimloc}(\partial\Omega)\leq n$ characteristic constant of $\Omega$ induced by a Whitney cubes decomposition of $\Omega$ see page~16 in Ref.~\cite{STEIN-1970} and Definition~2.5 in Ref.~\cite{Nystromthese}. The key point in the proof in Ref.~\cite{Nystrom} is that for an NTA domain $\operatorname{dimloc}(\partial\Omega)<n$ and more precisely $\operatorname{dimloc}(\partial\Omega)\leq \operatorname{dim}_M(\partial\Omega)$ the Minkowski dimension of $\Omega$.
\end{remark}
The constant $\beta=\beta(M)>0$ in the statement of Theorem \ref{preqthmNys} is the $\beta$ described in the following lemma.  
\begin{lemma}\label{lemNystrom}
[\cite{Jerison,Nystrom}] Let $Q_0\in \partial\Omega$ and  $\Gamma(Q_0,r):=B(Q_0,r)\cap \partial\Omega$ for all $r>0$. Let $w(x,\Gamma(Q_0,r),\Omega)$ be the harmonic measure on $\Omega$ and $d(y,\partial\Omega)$ be the Euclidean distance from $y$ to $\partial\Omega$.
Let in addition $G$ be the Green potential associated to $\Omega$ for the Poisson problem (\ref{PoissonDir1}) with $g=0$. Then there exists a constant $C=C(n)$ such that if $Cr<r_0$ and $x\in \Omega \setminus B(Q_0,Cr)$, then 
there exists $C(M,n)>0$ with $\beta=\beta(M)>0$ for all $y\in B(Q_0,r)\cap\Omega$ for which it holds
$$G(x,y)\leq C(M,n)\frac{d(y,\partial\Omega)^{\beta}}{r^{n-2+\beta}} w(x,\Gamma(Q_0,r),\Omega).$$
\end{lemma}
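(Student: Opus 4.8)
The plan is to deduce the estimate from the two foundational results of NTA potential theory established in Ref.~\cite{Jerison}: the Carleson estimate together with the boundary Hölder decay of nonnegative harmonic functions vanishing on a boundary portion, and the comparison between the Green function and the harmonic measure evaluated at a corkscrew point. All constants in these tools depend only on the NTA constant $M$ and on $n$, which is precisely the source of the dimensionless exponent $\beta=\beta(M)$.

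First I would fix $x\in\Omega\setminus B(Q_0,Cr)$ and regard $u(\cdot):=G(x,\cdot)$ as a function of its second variable. Choosing the dimensional constant $C=C(n)$ large enough (the constant furnished by the Carleson estimate of Ref.~\cite{Jerison}), the pole $x$ lies outside $B(Q_0,Cr/2)$, so $u$ is nonnegative and harmonic in $B(Q_0,Cr/2)\cap\Omega$ and vanishes continuously on $\partial\Omega\cap B(Q_0,Cr/2)$. Since any $y\in B(Q_0,r)\cap\Omega$ satisfies $d(y,\partial\Omega)\le|y-Q_0|<r$, its nearest boundary point $Q_y$ lies in $\partial\Omega\cap B(Q_0,2r)$, i.e. inside the region where $u$ vanishes.

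Next I would run the boundary Hölder decay. Iterating the Carleson estimate over the dyadic scales between $d(y,\partial\Omega)$ and $r$ (equivalently, invoking the boundary Hölder continuity of $u$ at $Q_y$, where $u(Q_y)=0$) yields $\beta=\beta(M)>0$ and a constant $C(M,n)$ with
$$u(y)\le C(M,n)\left(\frac{d(y,\partial\Omega)}{r}\right)^{\beta}\sup_{B(Q_0,2r)\cap\Omega}u.$$
A second application of the Carleson estimate controls the supremum by the value at the corkscrew point $A_r(Q_0)$ of Definition~\ref{defNTA}, namely $\sup_{B(Q_0,2r)\cap\Omega}u\le C(M,n)\,u(A_r(Q_0))$, since $d(A_r(Q_0),\partial\Omega)\simeq r$.

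Finally I would invoke the Green function–harmonic measure comparison of Ref.~\cite{Jerison}, $r^{\,n-2}G(x,A_r(Q_0))\le C(M,n)\,w(x,\Gamma(Q_0,r),\Omega)$, valid precisely because $x$ is at distance $\gtrsim r$ from $Q_0$. Substituting $u(A_r(Q_0))=G(x,A_r(Q_0))\le C(M,n)\,r^{\,2-n}w(x,\Gamma(Q_0,r),\Omega)$ into the chain above gives $G(x,y)\le C(M,n)\,d(y,\partial\Omega)^{\beta}r^{-(n-2+\beta)}w(x,\Gamma(Q_0,r),\Omega)$, which is the claim. The main obstacle is the Hölder decay step: one must produce an exponent $\beta$ depending only on $M$, uniform over the domain, and verify carefully that the harmonicity and boundary-vanishing hypotheses hold on every dyadic scale between $d(y,\partial\Omega)$ and $r$ — this is what fixes the dimensional threshold $C(n)$ in the statement and guarantees that the pole $x$ never interferes with the iteration.
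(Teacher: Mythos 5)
The paper states this lemma without proof, quoting it from Refs.~\cite{Jerison,Nystrom}, and your argument is exactly the standard derivation in those sources: boundary H\"older decay of $u=G(x,\cdot)$ at the nearest boundary point $Q_y$, the Carleson estimate reducing $\sup_{B(Q_0,2r)\cap\Omega}u$ to the corkscrew value $u(A_r(Q_0))$ (with a Harnack chain between $A_{2r}(Q_0)$ and $A_r(Q_0)$), and the comparison $r^{n-2}G(x,A_r(Q_0))\leq C(M,n)\,w(x,\Gamma(Q_0,r),\Omega)$ valid for $x\in\Omega\setminus B(Q_0,Cr)$. The proof is correct; the only cosmetic slip is attributing the dyadic oscillation-decay iteration to the Carleson estimate itself, whereas it rests on the exterior corkscrew (capacity density) condition, a point that does not affect the argument.
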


In the same way as for Theorem~\ref{preqthmNys} (see~\ref{append1} for the constant dependence precisions) we also precise the main theorem from Ref.~\cite{Nystrom}.
\begin{theorem}\label{thmNystrom}
Let $\Omega\subset\mathbb{R}^n$ with $n \geq 2$ be a bounded simply connected NTA domain. Let $q_0=1+\frac{1}{1-\beta(M)}>2$, where $\beta(M)>0$ is a constant describing the behavior of the Green function near the boundary (see Lemma~\ref{lemNystrom}) and $M$ is a constant which appears in the Definition~\ref{defNTA} of a NTA domain. Then there exists constants $\epsilon=\epsilon(\Omega)$ and 
$$C=C(M,n,q,r_0,\delta,\operatorname{dimloc}(\partial\Omega),diam(\Omega))$$
 such that for $\frac{n}{n-1}<q<q_0+\varepsilon$ and $\frac{1}{q}=\frac{1}{p}-\frac{1}{n}$ the following inequality is valid for all $f\in L^p(\Omega)$,
$$\Vert \nabla Gf\Vert_{L^q(\Omega)}\leq C \Vert f\Vert_{L^p(\Omega)}=C \Vert\Delta  Gf\Vert_{L^p(\Omega)}.
$$
\end{theorem}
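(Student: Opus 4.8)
The plan is to derive the gradient bound from the weighted estimate of Theorem~\ref{preqthmNys} by localising on a Whitney decomposition of $\Omega$ and using scale-invariant interior elliptic regularity. Set $u=Gf\in H^1_0(\Omega)$, so that $-\Delta u=f$ in $\Omega$ and, by Theorem~\ref{preqthmNys}, $\Vert u/d(\cdot,\partial\Omega)\Vert_{L^q(\Omega)}\leq C\Vert f\Vert_{L^p(\Omega)}$ for $q$ in the stated range. The philosophy is that, away from $\partial\Omega$, $u$ enjoys full interior regularity, so the only possible loss in an $L^q$ gradient estimate comes from the behaviour of $\nabla u$ as $d(\cdot,\partial\Omega)\to0$; this loss is measured precisely by the weight $d(\cdot,\partial\Omega)^{-1}$ already controlled above.

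First I would fix a Whitney decomposition $\Omega=\bigcup_j Q_j$ into dyadic cubes with side length $\ell_j$ comparable to $d(Q_j,\partial\Omega)$, together with slightly dilated cubes $Q_j^\ast\subset\Omega$ (for instance $Q_j^\ast=\tfrac{9}{8}Q_j$) on which $d(\cdot,\partial\Omega)\sim\ell_j$ and which retain the bounded overlap property. On each $Q_j^\ast$ the operator $-\Delta$ acts on a ball of radius $\sim\ell_j$ well inside $\Omega$, and splitting $u$ into the Newtonian potential of $f\mathbf{1}_{Q_j^\ast}$ plus a harmonic remainder, the Sobolev mapping property of the Riesz potential (the inequality $\Vert\nabla(-\Delta)^{-1}g\Vert_{L^q}\leq C\Vert g\Vert_{L^p}$ with $\tfrac1q=\tfrac1p-\tfrac1n$) together with the interior gradient estimate for harmonic functions yields the scale-invariant bound
$$\Vert\nabla u\Vert_{L^q(Q_j)}\leq C\Big(\tfrac{1}{\ell_j}\Vert u\Vert_{L^q(Q_j^\ast)}+\Vert f\Vert_{L^p(Q_j^\ast)}\Big),$$
with $C=C(n,q)$ independent of $j$. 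Since $d(\cdot,\partial\Omega)\sim\ell_j$ on $Q_j^\ast$, the first term is comparable to $\Vert u/d(\cdot,\partial\Omega)\Vert_{L^q(Q_j^\ast)}$.

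Next I would raise this inequality to the power $q$ and sum over $j$. Bounded overlap of the $Q_j^\ast$ gives $\sum_j\tfrac{1}{\ell_j^q}\Vert u\Vert_{L^q(Q_j^\ast)}^q\leq C\Vert u/d(\cdot,\partial\Omega)\Vert_{L^q(\Omega)}^q$, which Theorem~\ref{preqthmNys} bounds by $C\Vert f\Vert_{L^p(\Omega)}^q$. For the source term, since $q\geq p$ the elementary embedding $\ell^p\hookrightarrow\ell^q$ gives $\sum_j\Vert f\Vert_{L^p(Q_j^\ast)}^q\leq\big(\sum_j\Vert f\Vert_{L^p(Q_j^\ast)}^p\big)^{q/p}$, and bounded overlap yields $\sum_j\Vert f\Vert_{L^p(Q_j^\ast)}^p\leq C\Vert f\Vert_{L^p(\Omega)}^p$, so that $\sum_j\Vert f\Vert_{L^p(Q_j^\ast)}^q\leq C\Vert f\Vert_{L^p(\Omega)}^q$. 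Combining the two contributions gives $\Vert\nabla Gf\Vert_{L^q(\Omega)}\leq C\Vert f\Vert_{L^p(\Omega)}$, and the final equality with $\Vert\Delta Gf\Vert_{L^p(\Omega)}$ is just the identity $-\Delta Gf=f$.

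I expect the main difficulty to be bookkeeping rather than a new idea. The admissible interval $\tfrac{n}{n-1}<q<q_0+\varepsilon$ is inherited verbatim from Theorem~\ref{preqthmNys}, so the genuinely delicate point is tracking the dependence of $C$ on the NTA data $M$, $r_0$, $\delta$, on $\operatorname{dimloc}(\partial\Omega)$ and on $\operatorname{diam}(\Omega)$: the interior estimate contributes only a dimensional constant (it is scale invariant, up to harmless powers of $\operatorname{diam}(\Omega)$ bounding the local scales), while the decay exponent $\beta(M)$ of Lemma~\ref{lemNystrom} and the Whitney/Minkowski bound $\operatorname{dimloc}(\partial\Omega)<n$ enter decisively through the weighted estimate. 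One should also verify the inclusion $Q_j^\ast\subset\Omega$ and the uniform bounded overlap of the dilated Whitney cubes, which follow from the standard Whitney construction and require no regularity of $\partial\Omega$ beyond what is already used for Theorem~\ref{preqthmNys}; a minor technical point is the borderline case $q\geq n$ in the Sobolev step, handled by the corresponding Morrey or higher-integrability variant.
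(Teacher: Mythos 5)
Your proposal is correct, and it actually supplies more detail than the paper itself does at this point: the paper never writes out a proof of Theorem~\ref{thmNystrom} --- its appendix~\ref{append1} proves only the companion weighted estimate of Theorem~\ref{preqthmNys}, and the gradient bound is asserted ``in the same way'', deferring to Ref.~\cite{Nystrom} for the argument and tracking only the constants. Your Whitney-cube reduction is essentially the mechanism behind that reference, packaged cleanly: the scale-invariant local estimate $\Vert\nabla u\Vert_{L^q(Q_j)}\leq C(n,q)\bigl(\ell_j^{-1}\Vert u\Vert_{L^q(Q_j^\ast)}+\Vert f\Vert_{L^p(Q_j^\ast)}\bigr)$ is sound (the Newtonian potential of $f\mathbf{1}_{Q_j^\ast}$ is handled by fractional integration since $\vert\nabla N g\vert\leq C\,I_1(\vert g\vert)$, valid also for $n=2$ despite the logarithmic kernel, and the harmonic remainder by the interior gradient estimate, with the exponent relation $\frac1q=\frac1p-\frac1n$ making the constant dimensional), it uses no regularity of $\partial\Omega$ whatsoever, and after summation with bounded overlap and $\ell^p\hookrightarrow\ell^q$ (valid since $q>p$) the entire NTA/fractal input --- hence the range $\frac{n}{n-1}<q<q_0+\varepsilon$ and the whole constant dependence $C(M,n,q,r_0,\delta,\operatorname{dimloc}(\partial\Omega),diam(\Omega))$ --- is inherited verbatim from Theorem~\ref{preqthmNys}, exactly as the statement claims. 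A merit of your write-up over the paper's one-line pointer is that it makes explicit that all boundary geometry enters \emph{only} through the weighted estimate. One minor correction: your concern about a borderline case $q\geq n$ in the Sobolev step is vacuous, since $\frac1q=\frac1p-\frac1n$ with $\frac{n}{n-1}<q<\infty$ forces $1<p<n$, so the Hardy--Littlewood--Sobolev inequality for $I_1$ applies throughout the stated range and no Morrey-type variant is ever needed.
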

\begin{remark}
The same results hold true for Lipschitz domains but thanks to Dahlberg~\cite{Dahlberg} it is possible to replace $q_0(M)$ by $q_2=4$ in dimension $n=2$ and $q_n= 3$ in dimension $n\geq 3$.
\end{remark}
From Theorems~\ref{preqthmNys} and~\ref{thmNystrom} it follows the continuity up to boundary result for solutions of the Poisson equation with homogeneous Dirichlet boundary conditions on the plane NTA domains:
\begin{corollary}\label{linfPoissonR2}
Let $\Omega\subset\mathbb{R}^2$ be a bounded NTA domain characterized by its constant $M$ and $r_0$. Assume $f\in L^2(\Omega)$. Suppose furthermore that $u\in H^1_0(\Omega)$ is the weak solution of the Poisson problem (\ref{PoissonDir1}) with $g=0$.
Then $u\in C(\overline{\Omega}) $ satisfying~\eqref{EqCREch2} with 
the constant $C$ depending only on $M$, $r_0$ and $diam(\Omega)$.
\end{corollary}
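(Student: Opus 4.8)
The plan is to combine the two Nyström-type estimates of Theorems~\ref{preqthmNys} and~\ref{thmNystrom} to show that $u=Gf$ lies in $W^{1,q}(\Omega)$ for some exponent $q$ strictly larger than the dimension $n=2$, and then to invoke a Morrey-type embedding, which in the plane upgrades $W^{1,q}$ with $q>2$ to Hölder continuity up to the boundary. First I observe that the admissible range of exponents is nonempty in the right regime: since $\beta(M)>0$ one has $q_0=1+\frac{1}{1-\beta(M)}>2$, so we may fix any $q$ with $2<q<q_0$ (no recourse to the enlargement by $\epsilon$ is needed), for instance $q=\tfrac12(2+q_0)$, a quantity depending on $M$ alone. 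The conjugate exponent determined by $\frac1q=\frac1p-\frac12$ then satisfies $1<p<2$, and because $\Omega$ is bounded, Hölder's inequality gives $f\in L^p(\Omega)$ with $\|f\|_{L^p(\Omega)}\le \lambda(\Omega)^{\frac1p-\frac12}\|f\|_{L^2(\Omega)}$, the prefactor being controlled by $\operatorname{diam}(\Omega)$ and $p$.

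With this $p$ fixed, I would extract the two halves of the $W^{1,q}$ norm separately. For the gradient, Theorem~\ref{thmNystrom} directly yields $\|\nabla u\|_{L^q(\Omega)}=\|\nabla Gf\|_{L^q(\Omega)}\le C\|f\|_{L^p(\Omega)}$. For $u$ itself, I use Theorem~\ref{preqthmNys} together with the trivial bound $d(x,\partial\Omega)\le \operatorname{diam}(\Omega)$, valid on a bounded domain: writing $|Gf(x)|=d(x,\partial\Omega)\,|Gf(x)/d(x,\partial\Omega)|$ and integrating gives $\|u\|_{L^q(\Omega)}\le \operatorname{diam}(\Omega)\,\big(\int_\Omega|Gf/d(\cdot,\partial\Omega)|^q\,\dx\big)^{1/q}\le \operatorname{diam}(\Omega)\,C\|f\|_{L^p(\Omega)}$. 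Combining these two bounds, and then inserting the Hölder passage from $L^p$ to $L^2$, produces the single estimate $\|u\|_{W^{1,q}(\Omega)}\le C\|f\|_{L^2(\Omega)}$.

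The final step is the embedding. Since a bounded plane NTA domain is a Sobolev extension domain (Theorem~\ref{ThEquivNTAS}), I extend $u$ to $Eu\in W^{1,q}(\R^2)$ with $\|Eu\|_{W^{1,q}(\R^2)}\le \|E\|\,\|u\|_{W^{1,q}(\Omega)}$ and apply the classical Morrey inequality on $\R^2$: as $q>2$, one has $W^{1,q}(\R^2)\hookrightarrow C^{0,1-2/q}(\R^2)$, whence $Eu$, and hence its restriction $u$, is Hölder-continuous up to $\partial\Omega$, i.e.\ $u\in C(\overline\Omega)$, while $\|u\|_{L^\infty(\Omega)}\le \|Eu\|_{L^\infty(\R^2)}\le C(q)\|Eu\|_{W^{1,q}(\R^2)}$. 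Chaining the inequalities yields exactly~\eqref{EqCREch2}.

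It remains to verify that every constant reduces to $M$, $r_0$ and $\operatorname{diam}(\Omega)$, which is the only genuinely delicate point. The chosen $q$ (hence $p$) depends on $M$ through $q_0(M)$; the Morrey constant depends on $q$ only; and the extension norm $\|E\|$ for a plane NTA domain is controlled by its $(\eps,\delta)$-parameters, which by Theorem~\ref{equivNTAepsdelt} are functions of $M$ and $r_0$. The remaining dependence sits in the constant of Theorems~\ref{preqthmNys}--\ref{thmNystrom}, namely on $\delta$ and $\operatorname{dimloc}(\partial\Omega)$: one fixes $\delta$ as a fixed fraction of $r_0$, and one uses that for a plane NTA (quasicircle) boundary $\operatorname{dimloc}(\partial\Omega)\le \operatorname{dim}_M(\partial\Omega)$ is bounded in terms of $M$ and $r_0$ alone. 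This last uniform control of the local Minkowski dimension of the quasicircle is the main obstacle; granting it, all constants collapse to $C=C(M,r_0,\operatorname{diam}(\Omega))$ and the corollary follows.
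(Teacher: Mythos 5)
Your proof is correct, and it reaches the same conclusion as the paper, but by a genuinely different route at two points. First, for the $L^{q}$ bound on $u$ itself you invoke Theorem~\ref{preqthmNys} together with the trivial bound $d(x,\partial\Omega)\le \operatorname{diam}(\Omega)$; the paper never uses Theorem~\ref{preqthmNys} in this corollary at all — it gets the missing half of the $W^{1,q_0}$ norm from the Poincar\'e inequality in $W^{1,q_0}_0(\Omega)$, whose constant depends only on the diameter, so the weighted Green-potential estimate is not needed here. Second, and more significantly, to pass from $W^{1,q}(\Omega)$ to continuity up to the boundary you route through the Jones extension operator $E$ (via Theorem~\ref{ThEquivNTAS}) and must then trace $\Vert E\Vert$ back to $M$ and $r_0$ through the NTA-to-$(\eps,\delta)$ reduction of Theorem~\ref{equivNTAepsdelt}; the paper instead extends $u$ by zero, which is legitimate since $u\in H^1_0(\Omega)$ (the distributional gradient of the zero extension is the zero-extended gradient, which lies in $L^{q_0}(\mathbb{R}^2)$), is norm-preserving, and carries no geometric dependence whatsoever — so its constant tracking at this step is trivial, whereas yours adds the uniformity of Jones's extension norm in $(\eps,\delta)$ as an extra load-bearing citation. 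Your choice of $q$ strictly inside $(2,q_0)$, avoiding $\epsilon(\Omega)$, has the same effect as the paper's choice $q=q_0<q_0+\epsilon$, and your explicit H\"older passage $L^2(\Omega)\subset L^p(\Omega)$ makes a step the paper leaves implicit. Finally, the one point you flag as ``granted'' — suppressing the dependence on $\delta$ and on $\operatorname{dimloc}(\partial\Omega)$ for an NTA domain characterized by $M$ and $r_0$ — is exactly the point the paper also settles only by fixing $\delta$ relative to $r_0$ and citing Nystr\"om's thesis, so your treatment is at the same level of rigor as the paper's own.
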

\begin{proof}
According to Theorem~\ref{thmNystrom} we have for $q_0=1+\frac{1}{1-\beta(M)}>2$ the solution of the Poisson problem (\ref{PoissonDir1}) with $g=0$ $u\in W^{1,q_0}_0(\Omega)$ such that 
$$\Vert \nabla u\Vert_{L^{q_0}(\Omega)}\leq C \Vert f\Vert_{L^2(\Omega)},
$$
with $C=C(M,n,q_0,r_0,\delta,\operatorname{dimloc}(\partial\Omega),diam(\Omega)).$
Here we have fixed $n=2$, we have $q_0=q_0(M)$ and $\delta<<r_0$ fixed. Moreover, according to Ref.~\cite{Nystromthese} we can suppress on an NTA domain, characterized by $M$ and $r_0$, the dependence on $\operatorname{dimloc}(\partial\Omega)$. Then
$$\Vert \nabla u\Vert_{L^{q_0}(\Omega)}\leq C(M,r_0,diam(\Omega)) \Vert f\Vert_{L^2(\Omega)},
$$
but by the Poincar\'e inequality and extending by $0$ we have
$$\Vert  u\Vert_{W^{1,q_0}(\mathbb{R}^2)}\leq C(M,r_0,diam(\Omega)) \Vert f\Vert_{L^2(\Omega)}.
$$
From the Sobolev embedding, it follows that $u\in C(\mathbb{R}^2)$ with
$$\Vert u\Vert_{L^{\infty}(\mathbb{R}^2)}\leq C(q_0) \Vert  u\Vert_{W^{1,q_0}(\mathbb{R}^2)}.$$
Taking its restriction to $\Omega$ we obtain $u\in C(\overline{\Omega})$ with
$$\Vert u\Vert_{L^{\infty}(\Omega)}\leq C(M,r_0,diam(\Omega)) \Vert f\Vert_{L^2(\Omega)}.$$
\end{proof}

\section{Generalization of known well-posedness results for the Westervelt equation}\label{SecGenWrem}
 Refs.~\cite{Kalt3} and \cite{Kaltwer} on the well-posedness of the Westervelt equation use estimates that are true for bounded domains with a regular $C^2$ boundary. In this section we present the analogous estimates necessary in order to have similar results of well-posedness of the Westervelt equation on the admissible domains.
\begin{proposition}\label{propestadmdom}
Let $\Omega$ be a bounded connected admissible domain in $\mathbb{R}^n$ for $n=2$ or $3$ with a $d-$set boundary $\partial\Omega$ such that $n-1\le d<n$. Set
$$\beta_1=1-\frac{n-d}{2}>0\hbox{ and }\beta_2=2-\frac{n-d}{2}>0.$$
For $w\in H^1(\Omega)$ with $\Delta w\in L^2(\Omega)$ and $Tr_{\partial\Omega} w\in B^{2,2}_{\beta_2}(\partial\Omega)$ there holds
 \begin{align}
H^1(\Omega)\subset L^2(\Omega)&\hbox{ with }\Vert w\Vert_{L^2(\Omega)}\leq \tilde{C}_0 (\Vert \nabla w\Vert_{L^2(\Omega)}+\Vert Tr_{\partial\Omega} w\Vert_{B^{2,2}_{\beta_1}(\partial\Omega)}),\label{injL2}\\
&\Vert\nabla w\Vert_{L^2(\Omega)}\leq \widehat{C}_0 (\Vert \Delta w\Vert_{L^2(\Omega)}+\Vert Tr_{\partial\Omega} w\Vert_{B^{2,2}_{\beta_2}(\partial\Omega)}),\label{injL2nab}\\
H^1(\Omega)\subset L^6(\Omega)&\hbox{ with } \Vert w\Vert_{L^6(\Omega)}\leq \tilde{C}_1 (\Vert \nabla w\Vert_{L^2(\Omega)}+\Vert Tr_{\partial\Omega} w\Vert_{B^{2,2}_{\beta_1}(\partial\Omega)}),\label{injL6}\\
&\Vert w\Vert_{L^{\infty}(\Omega)} \leq \tilde{C}_2 (\Vert \Delta w\Vert_{L^2(\Omega)}+\Vert Tr_{\partial\Omega} w\Vert_{B^{2,2}_{\beta_2}(\partial\Omega)}),\label{injLin}\\
 L^{\frac{6}{5}}(\Omega)\subset H^{-1}(\Omega),&\hbox{ with }\Vert w\Vert_{H^{-1}(\Omega)}\leq \tilde{C}_3 \Vert w\Vert_{L^{\frac{6}{5}}(\Omega)}.\label{injdual}
\end{align}
Moreover, for $n=2$ and for $p_1>2$ and $p'_1>2$ fixed in a such way  that $2<p_1<q_0+\epsilon$ (see Theorem \ref{thmNystrom}) and $\frac{1}{p_1}+\frac{1}{p_1'}=\frac{1}{2}$ there exist constants $C_{p_1}$, $C_{p_1'}>0$ such that
\begin{align}
\Vert \nabla w\Vert_{L^{p_1}(\Omega)}\leq C_{p_1} (\Vert \Delta w\Vert_{L^2(\Omega)}+\Vert Tr_{\partial\Omega} w\Vert_{B^{2,2}_{\beta_2}(\partial\Omega)}),\label{injLp1}\\
\Vert w\Vert_{L^{p_1'}(\Omega)}\leq C_{p_1'} (\Vert \nabla w\Vert_{L^2(\Omega)}+\Vert Tr_{\partial\Omega} w\Vert_{B^{2,2}_{\beta_1}(\partial\Omega)}).\label{injLp1'}
\end{align}
\end{proposition}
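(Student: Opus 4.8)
The plan is to reduce each of the seven estimates to a statement about a function in $H^1_0(\Omega)$ by subtracting off the boundary data. Given $w$ as in the hypotheses, I would set $\bar g := E_{\partial\Omega}(Tr_{\partial\Omega} w)$, where $E_{\partial\Omega}$ is the bounded right inverse of the trace provided by Theorem~\ref{thmtradmissdom}, and put $w_0 := w - \bar g$. Since $Tr_{\partial\Omega} w_0 = 0$ and the trace theory on admissible domains identifies $H^1_0(\Omega)$ with the kernel of $Tr_{\partial\Omega}$, we get $w_0 \in H^1_0(\Omega)$. The two trace norms appearing in the statement match exactly the two available extension regularities: Theorem~\ref{thmtradmissdom} with $k=1$ yields $\bar g\in H^1(\Omega)$ with $\|\bar g\|_{H^1(\Omega)}\le C\|Tr_{\partial\Omega} w\|_{B^{2,2}_{\beta_1}(\partial\Omega)}$, while with $k=2$ it yields $\bar g\in H^2(\Omega)$ with $\|\bar g\|_{H^2(\Omega)}\le C\|Tr_{\partial\Omega} w\|_{B^{2,2}_{\beta_2}(\partial\Omega)}$. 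Each estimate is then the sum of a contribution from $\bar g$, controlled by a Sobolev embedding of the extension, and a contribution from $w_0$, controlled by the Dirichlet regularity theory.

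For the elementary estimates I would argue as follows. Inequality~\eqref{injL2} is the Poincar\'e inequality (Theorem~\ref{inegPoinc}) applied to $w_0$: $\|w_0\|_{L^2}\le C\|\nabla w_0\|_{L^2}\le C(\|\nabla w\|_{L^2}+\|\nabla\bar g\|_{L^2})$, and the triangle inequality together with $\|\bar g\|_{H^1}\le C\|Tr_{\partial\Omega} w\|_{B^{2,2}_{\beta_1}}$ closes it. Inequality~\eqref{injL6} follows by combining the continuous embedding $H^1(\Omega)\hookrightarrow L^6(\Omega)$ (Theorem~\ref{thmsobolembadm}, valid for $n\le3$) with~\eqref{injL2} used to absorb the $L^2$ part of $\|w\|_{H^1}$; inequality~\eqref{injLp1'} is the same argument with $H^1(\Omega)\hookrightarrow L^{p_1'}(\Omega)$, which in dimension $n=2$ holds for the finite exponent $p_1'=(\tfrac12-\tfrac1{p_1})^{-1}$. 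Inequality~\eqref{injdual} involves no boundary and is pure duality: from $H^1_0(\Omega)\hookrightarrow L^6(\Omega)$ the transposed embedding gives $L^{6/5}(\Omega)=(L^6(\Omega))'\hookrightarrow (H^1_0(\Omega))'=H^{-1}(\Omega)$.

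The remaining three estimates use the $H^2$ extension and the sharp regularity results. For~\eqref{injL2nab} I would apply the Green formula (Proposition~\ref{Green}, whose boundary pairing vanishes on $H^1_0$) with both arguments equal to $w_0$, giving $\|\nabla w_0\|_{L^2}^2=-\int_\Omega w_0\,\Delta w_0\le\|w_0\|_{L^2}\|\Delta w_0\|_{L^2}\le C\|\nabla w_0\|_{L^2}\|\Delta w_0\|_{L^2}$, hence $\|\nabla w_0\|_{L^2}\le C\|\Delta w_0\|_{L^2}\le C(\|\Delta w\|_{L^2}+\|\bar g\|_{H^2})$. For~\eqref{injLin} I would write $\|w\|_{L^\infty}\le\|w_0\|_{L^\infty}+\|\bar g\|_{L^\infty}$, bound $\bar g$ by $H^2(\Omega)\hookrightarrow L^\infty(\Omega)$ (Theorem~\ref{thmsobolembadm}, since $2\cdot2>n$ for $n\le3$), and bound $w_0$ by Xie's inequality (Theorem~\ref{thmXie}) when $n=3$ and by Corollary~\ref{linfPoissonR2} when $n=2$, where admissibility forces $\Omega$ to be NTA by Theorem~\ref{ThEquivNTAS}; in both cases I would combine with the already established $\|\nabla w_0\|_{L^2}\le C\|\Delta w_0\|_{L^2}$. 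For the planar estimate~\eqref{injLp1} I would treat $\bar g$ through the two-dimensional embedding $\nabla\bar g\in H^1(\Omega)\hookrightarrow L^{p_1}(\Omega)$ and treat $w_0=G(-\Delta w_0)$ through Nystr\"om's gradient estimate (Theorem~\ref{thmNystrom}) with $q=p_1$: the hypothesis $2<p_1<q_0+\epsilon$ is precisely its range, and the conjugate Sobolev exponent $p$ defined by $\tfrac1{p_1}=\tfrac1p-\tfrac12$ satisfies $1<p<2$, so $\Delta w_0\in L^2(\Omega)\subset L^p(\Omega)$ on the bounded domain and $\|\nabla w_0\|_{L^{p_1}}\le C\|\Delta w_0\|_{L^p}\le C\|\Delta w_0\|_{L^2}$.

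The main obstacle is not any single computation but the legitimacy of the $H^2$ trace extension and the sharpness of the regularity used for $w_0$. The $H^2$ extension underlying~\eqref{injL2nab}, \eqref{injLin} and~\eqref{injLp1} exists only because $\partial\Omega$ is a $d$-set preserving Markov's local inequality with $n-1\le d<n$, which places $\beta_2=2-\frac{n-d}{2}$ in the range where Theorem~\ref{thmtradmissdom} furnishes a bounded extension into $H^2$; on a mere Sobolev extension domain this step would break down. The second delicate point is dimensional: for $n=3$ the $L^\infty$ control of $w_0$ rests on Xie's sharp constant, whereas for $n=2$ both~\eqref{injLin} and the genuinely two-dimensional gradient estimates~\eqref{injLp1}--\eqref{injLp1'} rely on the Nystr\"om theory for NTA domains, which is exactly why those two estimates are restricted to $n=2$ and to the Nystr\"om range $p_1<q_0+\epsilon$.
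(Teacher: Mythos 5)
Your proposal is correct and follows essentially the same route as the paper's proof: subtract the extension $\bar g=E_{\partial\Omega}(Tr_{\partial\Omega}w)$ furnished by Theorem~\ref{thmtradmissdom} (in $H^1$ for the $\beta_1$-estimates, in $H^2$ for the $\beta_2$-estimates), reduce to $w_0\in H^1_0(\Omega)$, and control $w_0$ by the Sobolev embeddings of Theorem~\ref{thmsobolembadm}, duality for~\eqref{injdual}, Nystr\"om's Theorem~\ref{thmNystrom} with $p=\frac{2p_1}{2+p_1}\in(1,2)$ for~\eqref{injLp1} in $n=2$, and Corollary~\ref{PoissonR3} (Xie) in $n=3$. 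The only deviations are cosmetic: you spell out the Lax--Milgram/Green-formula stability estimate $\Vert\nabla w_0\Vert_{L^2}\leq C\Vert\Delta w_0\Vert_{L^2}$ that the paper invokes implicitly ("in the same way"), and for~\eqref{injLin} in $n=2$ you cite Corollary~\ref{linfPoissonR2} directly rather than the embedding $W^{1,p_1}_0(\Omega)\subset L^{\infty}(\Omega)$ applied after~\eqref{injLp1}, both of which rest on the same Nystr\"om estimate.
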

\begin{proof}
Estimates~(\ref{injL2}) and~(\ref{injL6}) are a direct consequence of Proposition~3 in Ref.~\cite{ARFI-2017} as the norm $\sqrt{\Vert \nabla . \Vert_{L^2(\Omega)}^2+\Vert Tr_{\partial\Omega}.\Vert_{L^2(\partial\Omega)}^2}$ is equivalent to the $H^1$-norm and by Ref.~\cite{ARFI-2017} for instance $B^{2,2}_{\beta_1}(\partial\Omega)\subset\subset L^2(\partial\Omega)$. Estimate~(\ref{injdual}) comes from Theorem~\ref{thmsobolembadm} and  the duality.  In dimension $n=2$ we have by Theorem~\ref{thmtradmissdom} 
$$E_{\partial\Omega}(Tr_{\partial\Omega}w)\in H^2(\Omega)\hbox{ with }\Vert E_{\partial\Omega}(Tr_{\partial\Omega}w)\Vert_{H^2(\Omega)}\leq C \Vert Tr_{\partial\Omega}w\Vert_{B^{2,2}_{\beta_2}(\partial\Omega)} $$ 
and $Tr_{\partial\Omega}[w-E_{\partial\Omega}(Tr_{\partial\Omega}w)]=0$. 

Then it implies $w-E_{\partial\Omega}(Tr_{\partial\Omega}w)\in H^1_0(\Omega)$ and $\Delta[w-E_{\partial\Omega}(Tr_{\partial\Omega}w)]\in L^2(\Omega)$. So by Theorem~\ref{thmNystrom} we take $p_1>2$ such that $2<p_1<q_0+\epsilon$ to obtain 
$$\Vert\nabla [w-E_{\partial\Omega}(Tr_{\partial\Omega}w)]\Vert_{L^{p_1}(\Omega)}\leq C(p_1,\Omega) \Vert \Delta[w-E_{\partial\Omega}(Tr_{\partial\Omega}w)]\Vert_{ L^{\frac{2p_1}{2+p_1}}(\Omega)}.$$
But $1<\frac{2p_1}{2+p_1}<2$ and $\Omega$ is bounded. Consequently $L^2(\Omega)\hookrightarrow L^{\frac{2p_1}{2+p_1}}(\Omega)$ and thus  we  obtain estimate~(\ref{injLp1}) by the fact that
\begin{align*}
\Vert \nabla w\Vert_{L^{p_1}(\Omega)}\leq & \Vert \nabla[ w - E_{\partial\Omega}(Tr_{\partial\Omega}w) ]\Vert_{L^{p_1}(\Omega)}+\Vert \nabla E_{\partial\Omega}(Tr_{\partial\Omega}w)  \Vert_{L^{p_1}(\Omega)}\\
\leq & C \Vert \Delta[w-E_{\partial\Omega}(Tr_{\partial\Omega}w)]\Vert_{ L^{\frac{2p_1}{2+p_1}}(\Omega)} +C \Vert E_{\partial\Omega}(Tr_{\partial\Omega}w)  \Vert_{H^2(\Omega)}\\
\leq & C \Vert \Delta w\Vert_{L^2(\Omega)}+ C \Vert \Delta E_{\partial\Omega}(Tr_{\partial\Omega}w)\Vert_{ L^2(\Omega)} +C \Vert E_{\partial\Omega}(Tr_{\partial\Omega}w)  \Vert_{H^2(\Omega)}\\
\leq & C \Vert \Delta w\Vert_{L^2(\Omega)} +C \Vert E_{\partial\Omega}(Tr_{\partial\Omega}w)  \Vert_{H^2(\Omega)}\\
\leq & C \Vert \Delta w\Vert_{L^2(\Omega)} + C \Vert Tr_{\partial\Omega}w\Vert_{B^{2,2}_{\beta_2}(\partial\Omega)} .
\end{align*}
We  deduce estimate~(\ref{injL2nab})  in the same way but also estimate~(\ref{injLin}) as $W^{1,p_1}_0(\Omega)\subset L^{\infty}(\Omega)$ by Theorem \ref{thmsobolembadm}.
In dimension $n=3$ we use again $E_{\partial\Omega}(Tr_{\partial\Omega}w) $ with Corollary~\ref{PoissonR3} and Theorem~\ref{thmtradmissdom} to obtain estimate~(\ref{injLin}).
The proof of estimate~(\ref{injLp1'}) is not different from the proof of estimate~(\ref{injL6}) as $p_1'>2$ and we are in the dimension $n=2$.
\end{proof}
\begin{remark}\label{remarkNTA}
Estimates~(\ref{injL2})--(\ref{injdual}) are very similar to those used in Ref.~\cite{Kalt3} and \cite{Kaltwer} for a regular domain, with the Besov spaces replacing $H^{3/2}(\partial\Omega)$ and $H^{1/2}(\Omega)$. Nevertheless  Theorem~\ref{thmNystrom} tell us that we do not have on a general NTA domain or Lipschitz domain the estimate
$$\Vert \nabla w\Vert_{L^6(\Omega)}\leq C (\Vert \Delta w\Vert_{L^2(\Omega)}+\Vert Tr_{\partial\Omega} w\Vert_{B^{2,2}_{\beta_2}(\partial\Omega)}).$$
This requires  a sly modification in the proof of Ref.~\cite{Kaltwer}. In the dimension $n=2$ this estimate stays true for convex polygonal domains thanks to Ref.~\cite{Grisvard} which allows to extend the results of well-posedness in Refs.~\cite{Kalt3,Kalt2,Kalt1,Kaltwer} found initially for a regular $C^2$ boundary.
\end{remark}

\section{Well posedness of the damped linear wave equation.}\label{secwpdampwavdirhom}
\subsection{Existence and unicity of a weak solution with homogeneous Dirichlet boundary condition.}\label{subsecweakwpdampwavedir}
In this subsection we suppose that $\Omega$ is an arbitrary bounded domain in $\mathbb{R}^n$, on which we consider the following linear strongly damped wave equation with homogeneous Dirichlet boundary condition:
\begin{equation}\label{dampedwaveeqdirhom}
\left\lbrace
\begin{array}{c}
\del_t^2 u-c^2 \Delta u- \eps \nu\Delta \del_t u=f\;\;\;on\;\;]0,+\infty[\times\Omega,\\
u\vert_{\partial\Omega}=0\;\;\;on\;\;[0;+\infty[\times\partial\Omega,\\
u(0)=u_0,\;\;\del_t u(0)=u_1.
\end{array}
\right.
\end{equation}
We are looking for weak solutions of system~(\ref{dampedwaveeqdirhom}) in the following sense:
\begin{definition}\label{defweakdampwav}
For $f\in L^2([0,+\infty[;L^2(\Omega))$, $u_0\in H^1_0(\Omega)$, and $u_1\in L^2(\Omega)$,
we say that a function $u\in L^2([0,+\infty[;H^1_0(\Omega))$ with $\partial_t u \in L^2([0,+\infty[;H^1_0(\Omega))$ and $\partial^2_t u\in L^2([0,+\infty[;H^{-1}(\Omega)$ is a weak solution of  problem~(\ref{dampedwaveeqdirhom}) if  it satisfies
$$ u(0)=u_0,\;\;\del_t u(0)=u_1$$
and for all $v\in L^2([0,+\infty[;H^1_0(\Omega))$
\begin{multline}
\int_0^{+\infty}\langle  \del_t^2 u,v\rangle_{(H^{-1}(\Omega),H^1_0(\Omega))} +c^2 (\nabla u,\nabla v)_{L^2(\Omega)}+\eps \nu(\nabla \del_t u,\nabla v)_{L^2(\Omega)} ds\\
=\int_0^{+\infty} (f,v)_{L^2(\Omega)}ds.\label{varformdampedwaveeqdirhom}
\end{multline}
\end{definition}
To prove the existence and uniqueness of a such weak solution we use the Galerkin method  and follow ~\cite[p.~379--387]{EVANS-2010} using   the Poincar\'e inequality. 
 To perform the Galerkin method we start by   solving a finite dimensional approximation. We thus  select smooth functions $w_k=w_k(x)$ $(k\geq 1)$, such that
\begin{equation}\label{ortgbasH10}
\lbrace w_k\rbrace_{k=1}^{\infty}\text{ is an orthogonal basis of }H^1_0(\Omega)
\end{equation}
and in addition
\begin{equation}\label{ortnbasL2}
\lbrace w_k\rbrace_{k=1}^{\infty}\text{ is an orthonormal basis of }L^2(\Omega).
\end{equation}
Typically $w_k$ are the normalized eigenfunctions of the operator $-\Delta$ on $\Omega$ with homogeneous Dirichlet boundary conditions, solving
$$-\Delta w_k=\lambda_k w_k$$
 in the following sense
\begin{equation}\label{eigenfuncDelta}
\forall w\in H^1_0(\Omega) \quad (\nabla w_k,\nabla w)_{L^2(\Omega)}=\lambda_k (w_k,w)_{L^2(\Omega)}.
\end{equation}
For  a positive integer $m$ we define the finite approximation of $u$ by
\begin{equation}\label{Galerksol}
u_m(t):=\sum_{i=1}^m d^k_m(t) w_k.
\end{equation}
Then we firstly determine the coefficients $d^k_m(t)$: 
\begin{proposition}\label{WpGalerksol}
For each integer $m=1,2,...,$ there exists a unique function $u_m$ of form~(\ref{Galerksol}) with the coefficients $d^k_m(t)\in H^2([0,+\infty[)$ $(t \geq 0,\;k=1,...,m)$ satisfying
\begin{align}
d^{k}_m(0)&=(u_0,w_k)_{L^2(\Omega)}\in\mathbb{R}\;(k=1,...,m),\label{Galerkinit1}\\
\partial_t d^{k}_m(0)&=(u_1,w_k)_{L^2(\Omega)}\in\mathbb{R}\;(k=1,...,m)\label{Galrkinit2}
\end{align}
 and for $t \geq 0$ and $k=1,...,m$
\begin{equation}\label{Galerkdampedeq}
(\partial^2_t u_m,w_k)_{L^2(\Omega)}+c^2 (\nabla u_m,\nabla w_k)_{L^2(\Omega)}+\eps \nu(\nabla \partial_t u_m,\nabla w_k)_{L^2(\Omega)}=(f,w_k)_{L^2(\Omega)}.
\end{equation}
\end{proposition}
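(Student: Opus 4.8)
The plan is to substitute the ansatz~(\ref{Galerksol}) into~(\ref{Galerkdampedeq}) and reduce the problem to a system of linear ordinary differential equations for the coefficients $(d^k_m)_{k=1}^m$. Taking $w_k$ to be the Dirichlet eigenfunctions as in~(\ref{eigenfuncDelta}), the orthonormality~(\ref{ortnbasL2}) gives $(w_i,w_k)_{L^2(\Omega)}=\delta_{ik}$ and $(\partial_t^2 u_m,w_k)_{L^2(\Omega)}=\partial_t^2 d^k_m$, while the eigenvalue relation yields $(\nabla w_i,\nabla w_k)_{L^2(\Omega)}=\lambda_k\delta_{ik}$, so that $c^2(\nabla u_m,\nabla w_k)_{L^2(\Omega)}=c^2\lambda_k d^k_m$ and $\eps\nu(\nabla\partial_t u_m,\nabla w_k)_{L^2(\Omega)}=\eps\nu\lambda_k\,\partial_t d^k_m$. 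Writing $f^k(t):=(f(t),w_k)_{L^2(\Omega)}$, equation~(\ref{Galerkdampedeq}) decouples into the $m$ scalar problems
\begin{equation}
\partial_t^2 d^k_m+\eps\nu\lambda_k\,\partial_t d^k_m+c^2\lambda_k\,d^k_m=f^k(t),\qquad k=1,\dots,m,
\end{equation}
each supplemented with the initial data~(\ref{Galerkinit1})--(\ref{Galrkinit2}). Since $\|w_k\|_{L^2(\Omega)}=1$ and $f\in L^2([0,+\infty[;L^2(\Omega))$, each forcing satisfies $f^k\in L^2([0,+\infty[)$. For a general basis obeying only~(\ref{ortgbasH10})--(\ref{ortnbasL2}) the same substitution produces a coupled linear system $\partial_t^2\mathbf{d}_m+\eps\nu A\,\partial_t\mathbf{d}_m+c^2 A\,\mathbf{d}_m=\mathbf{f}$ with $A_{ki}=(\nabla w_i,\nabla w_k)_{L^2(\Omega)}$ symmetric positive definite, which one diagonalizes to recover the same structure.

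Next I would establish existence and uniqueness. Rewriting each scalar equation as a first-order system in $(d^k_m,\partial_t d^k_m)$ with constant coefficients and forcing in $L^2_{\mathrm{loc}}$, the Carath\'eodory existence--uniqueness theorem for linear systems produces a unique absolutely continuous solution on $[0,+\infty[$ attaining the prescribed initial data. From the equation itself, $\partial_t^2 d^k_m=f^k-\eps\nu\lambda_k\,\partial_t d^k_m-c^2\lambda_k\,d^k_m$ is then locally square integrable, so a priori $d^k_m\in H^2_{\mathrm{loc}}([0,+\infty[)$.

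The main point, and the step requiring care, is upgrading this to the global regularity $d^k_m\in H^2([0,+\infty[)$ on the half-line, which is where the strong damping is essential. Because $\eps,\nu>0$ and $\lambda_k>0$, both coefficients of the characteristic polynomial $r^2+\eps\nu\lambda_k r+c^2\lambda_k$ are positive, so its roots have strictly negative real part (equivalently, the companion matrix is Hurwitz), and the fundamental solution decays like $e^{-\omega_k t}$ for some $\omega_k>0$. Representing $d^k_m$ by the variation-of-constants formula, the homogeneous part carrying the initial data decays exponentially and hence lies in $L^2([0,+\infty[)$, while the particular part is the convolution of an exponentially decaying (thus $L^1$) kernel with $f^k\in L^2$, which is again in $L^2$ by Young's inequality; the same estimate applied to the differentiated kernel gives $\partial_t d^k_m\in L^2([0,+\infty[)$. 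Feeding this back into the equation yields $\partial_t^2 d^k_m\in L^2([0,+\infty[)$, so that $d^k_m\in H^2([0,+\infty[)$ as claimed, and uniqueness of $u_m$ follows from that of each $d^k_m$. I expect the only genuine obstacle to be this last global-in-time integrability, since without the dissipative term $\eps\nu\Delta\partial_t u$ one would obtain merely $H^2_{\mathrm{loc}}$.
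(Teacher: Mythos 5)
Your proposal is correct and follows the same basic route as the paper: substitute the ansatz~(\ref{Galerksol}) into~(\ref{Galerkdampedeq}) and reduce to a linear ODE system for the coefficients. The difference is in how the global regularity is justified. The paper keeps the coupled system $\partial_t^2 d^k_m+\sum_{l} c^2 e^{kl} d^l_m+\sum_{l}\eps\nu\, e^{kl}\partial_t d^l_m=f^k$ with $e^{kl}=(\nabla w_l,\nabla w_k)_{L^2(\Omega)}$ and simply invokes Cauchy--Lipschitz (Carath\'eodory) theory, citing Roub\'{\i}\v{c}ek, to assert a unique solution $d_m\in H^2([0,+\infty[)$; strictly speaking that theory only delivers a unique global solution with $H^2_{loc}$ regularity, so the membership in $H^2$ of the whole half-line is left implicit. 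Your extra step is precisely what closes this gap: diagonalizing the symmetric positive definite Gram matrix (or working directly with the eigenfunction basis~(\ref{eigenfuncDelta}), which is the paper's stated ``typical'' choice, so the system decouples), noting that $r^2+\eps\nu\lambda_k r+c^2\lambda_k$ is Hurwitz because $\eps,\nu,\lambda_k>0$, and then using variation of constants together with Young's inequality for the convolution of the exponentially decaying kernel with $f^k\in L^2([0,+\infty[)$ to get $d^k_m,\,\partial_t d^k_m\in L^2([0,+\infty[)$, after which the equation itself gives $\partial_t^2 d^k_m\in L^2([0,+\infty[)$. This buys a self-contained proof of the global $H^2([0,+\infty[)$ statement and makes explicit that the strong damping term (equivalently, the strict positivity of the Dirichlet eigenvalues) is what rules out mere $H^2_{loc}$ regularity; the paper's argument is shorter but delegates exactly this point to a citation. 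Your fallback treatment of a general basis via the coupled system with the SPD matrix $A$ also correctly covers the case where only~(\ref{ortgbasH10})--(\ref{ortnbasL2}) are assumed, so nothing in the statement is lost.
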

\begin{proof}
Let $u_m$ be given by Eq.~(\ref{Galerksol}).
Furthermore, we have 
$$e^{kl}:=(\nabla w_l, \nabla w_k)_{L^2(\Omega)}\in\mathbb{R}.$$
We also write $f^k=(f,w_k)_{L^2(\Omega)}\in L^2([0,+\infty[)$.
Consequently, relation~(\ref{Galerkdampedeq}) becomes the linear system of ODE
\begin{equation}\label{Galerkdampedeq2}
\partial^2_t d^k_m(t)+\sum_{l=1}^m c^2 e^{kl}d^l_m(t)+\sum_{l=1}^m \eps \nu e^{kl} \partial_t d^l_m(t) =f^k(t)\;\;(t \geq 0,\;k=1,...,m),
\end{equation}
with the initial conditions~(\ref{Galerkinit1}) and~(\ref{Galrkinit2}). According to Cauchy-Lipschitz theory for ordinary differential equations~\cite{Roubicek}, there exists a unique function 
$$d_m(t)=(d^1_m(t),..,d^m_m(t))\in H^2([0,+\infty[),$$ satisfying~(\ref{Galerkinit1}),~(\ref{Galrkinit2}) and solving~(\ref{Galerkdampedeq2}) for $t \geq 0$.
\end{proof}
Now, we need to obtain uniform energy estimates on $m$ to be able to pass the limit in Eq.~(\ref{Galerkdampedeq2}) for $m\rightarrow \infty$.
\begin{proposition}
Let $u_m$ be of form~(\ref{Galerksol}) satisfying~(\ref{Galerkinit1}),~(\ref{Galrkinit2}) and~(\ref{Galerkdampedeq}), defined in Proposition~\ref{WpGalerksol}.
Then there exists a constant $C>0$, depending only on $\Omega$, such that for $m\in\mathbb{N}^*$
\begin{align}
\max_{t \geq 0} (\Vert u_m(t)\Vert_{H^1_0(\Omega)}^2 &+ \Vert\partial_t u_m(t)\Vert_{L^2(\Omega)}^2)+\Vert \nabla \partial_t u_m\Vert_{L^2([0,+\infty[;L^2(\Omega)) }^2 \nonumber\\
&+\Vert \nabla  u_m\Vert_{L^2([0,+\infty[;L^2(\Omega)) }^2 +\Vert\partial^2_t u_m\Vert_{L^2((0;+\infty[;H^{-1}(\Omega))}^2\nonumber\\
&\leq C (\Vert f\Vert_{L^2([0,+\infty[;L^2(\Omega))}^2+\Vert u_0\Vert_{H^1_0(\Omega)}^2+\Vert u_1\Vert_{L^2(\Omega)}^2).\label{aprioriGalerkdamped}
\end{align}
\end{proposition}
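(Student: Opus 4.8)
The plan is to derive~(\ref{aprioriGalerkdamped}) from two successive energy identities obtained by testing the Galerkin system~(\ref{Galerkdampedeq}) against well-chosen combinations of the $w_k$, and then to read off the bound on $\partial_t^2 u_m$ directly from the equation. The structural point that distinguishes this global-in-time estimate from the classical finite-horizon one in~\cite{EVANS-2010} is that the strong damping together with the Poincar\'e inequality (Theorem~\ref{inegPoinc}) lets one absorb every lower-order term without Gronwall's lemma, so that the constants remain uniform in $t\geq 0$.

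First I would multiply the $k$-th equation of~(\ref{Galerkdampedeq}) by $\partial_t d^k_m$ and sum, i.e. test with $v=\partial_t u_m$. Recognising the first two terms as time derivatives of $\frac12\Vert\partial_t u_m\Vert_{L^2(\Omega)}^2$ and $\frac{c^2}{2}\Vert\nabla u_m\Vert_{L^2(\Omega)}^2$ produces
\begin{equation*}
\frac{1}{2}\frac{d}{dt}\Big(\Vert\partial_t u_m\Vert_{L^2(\Omega)}^2+c^2\Vert\nabla u_m\Vert_{L^2(\Omega)}^2\Big)+\eps\nu\Vert\nabla\partial_t u_m\Vert_{L^2(\Omega)}^2=(f,\partial_t u_m)_{L^2(\Omega)}.
\end{equation*}
Since $\partial_t u_m\in H^1_0(\Omega)$, Poincar\'e gives $\Vert\partial_t u_m\Vert_{L^2(\Omega)}\leq C\Vert\nabla\partial_t u_m\Vert_{L^2(\Omega)}$, so Cauchy--Schwarz and Young bound the right-hand side by $\frac{\eps\nu}{2}\Vert\nabla\partial_t u_m\Vert_{L^2(\Omega)}^2+C\Vert f\Vert_{L^2(\Omega)}^2$. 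Absorbing the dissipative term, integrating over $[0,T]$, and using $\Vert\partial_t u_m(0)\Vert_{L^2(\Omega)}\leq\Vert u_1\Vert_{L^2(\Omega)}$ and $\Vert\nabla u_m(0)\Vert_{L^2(\Omega)}\leq\Vert\nabla u_0\Vert_{L^2(\Omega)}$ (Bessel-type inequalities for the projected initial data), I obtain bounds on $\Vert\partial_t u_m(T)\Vert_{L^2(\Omega)}^2+c^2\Vert\nabla u_m(T)\Vert_{L^2(\Omega)}^2$ and on $\int_0^T\Vert\nabla\partial_t u_m\Vert_{L^2(\Omega)}^2\,ds$ that are uniform in $T$ and $m$; the supremum over $T\geq0$ then controls the first line of~(\ref{aprioriGalerkdamped}) together with the damping integral.

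To recover the missing $\int_0^\infty\Vert\nabla u_m\Vert_{L^2(\Omega)}^2$ I would test instead with $v=u_m$ and integrate over $[0,T]$, writing $(\partial_t^2 u_m,u_m)_{L^2(\Omega)}=\frac{d}{dt}(\partial_t u_m,u_m)_{L^2(\Omega)}-\Vert\partial_t u_m\Vert_{L^2(\Omega)}^2$. This expresses $c^2\int_0^T\Vert\nabla u_m\Vert_{L^2(\Omega)}^2\,ds$ in terms of $\int_0^T\Vert\partial_t u_m\Vert_{L^2(\Omega)}^2\,ds$ (already controlled via Poincar\'e), a source term handled by Young so that a multiple of $\int_0^T\Vert\nabla u_m\Vert_{L^2(\Omega)}^2\,ds$ is absorbed on the left, boundary contributions dominated by $\max_{t\ge0}(\Vert\partial_t u_m\Vert_{L^2(\Omega)}^2+\Vert\nabla u_m\Vert_{L^2(\Omega)}^2)$ plus the data, and a favourably-signed $\frac{\eps\nu}{2}\Vert\nabla u_m(T)\Vert_{L^2(\Omega)}^2$; letting $T\to\infty$ gives the uniform bound. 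Finally, for $v\in H^1_0(\Omega)$ with $\Vert v\Vert_{H^1_0(\Omega)}\le1$ and $v^1$ its $H^1_0$-orthogonal projection onto $\mathrm{span}\{w_k\}_{k\le m}$, the fact that the $w_k$ are eigenfunctions of $-\Delta$ makes $H^1_0$- and $L^2$-orthogonality onto this span coincide, so $(\partial_t^2 u_m,v)_{L^2(\Omega)}=(\partial_t^2 u_m,v^1)_{L^2(\Omega)}$; substituting~(\ref{Galerkdampedeq}) and using $\Vert v^1\Vert_{H^1_0(\Omega)}\le1$ yields $\Vert\partial_t^2 u_m\Vert_{H^{-1}(\Omega)}\le C(\Vert f\Vert_{L^2(\Omega)}+\Vert\nabla u_m\Vert_{L^2(\Omega)}+\Vert\nabla\partial_t u_m\Vert_{L^2(\Omega)})$, and squaring and integrating in time closes the last term of~(\ref{aprioriGalerkdamped}).

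The main obstacle is precisely this uniformity in $t\geq 0$: on a finite interval a crude Gronwall argument suffices, but here an exponentially growing constant would be useless, so the crux is to exploit the dissipation $\eps\nu\Vert\nabla\partial_t u_m\Vert_{L^2(\Omega)}^2$ and the Poincar\'e inequality to close both energy identities with $T$-independent constants.
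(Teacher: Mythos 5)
Your proposal is correct and follows essentially the same route as the paper's proof: the same two energy identities (testing with $\partial_t u_m$ and then with $u_m$, rewriting $(\partial_t^2 u_m,u_m)_{L^2(\Omega)}$ by integration by parts in time), absorption of all lower-order terms through the strong damping and the Poincar\'e inequality without any Gronwall argument, and the same projection--duality estimate for $\partial_t^2 u_m$ in $H^{-1}(\Omega)$. Your explicit observation that the eigenfunction property makes the $L^2$- and $H^1_0$-orthogonal projections onto $\mathrm{span}\{w_k\}_{k\le m}$ coincide, so that $\Vert v^1\Vert_{H^1_0(\Omega)}\le 1$, is precisely the justification the paper asserts but leaves implicit.
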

\begin{proof}
We multiply equality~(\ref{Galerkdampedeq}) by $\partial_t d^k_m(t)$, sum over $k=1,..,m$ and recall~(\ref{Galerksol}) to obtain
$$(\partial^2_t u_m,\partial_t u_m)_{L^2(\Omega)}+c^2 (\nabla u_m,\nabla \partial_t u_m)_{L^2(\Omega)}+\eps \nu(\nabla \partial_t u_m,\nabla \partial_t u_m)_{L^2(\Omega)}=(f,\partial_t u_m)_{L^2(\Omega)}$$
for almost all $t \geq 0$. 
Using successively Cauchy-Schwarz, Poincar\'e and Young inequalities we find
\begin{align}
(f,\partial_t u_m)_{L^2(\Omega)}\leq & \Vert f\Vert_{L^2(\Omega)} \Vert \partial_t u_m\Vert_{L^2(\Omega)}\nonumber\\
\leq & K\Vert f\Vert_{L^2(\Omega)}  \Vert\nabla \partial_t u_m\Vert_{L^2(\Omega)}\label{proofGalerkest1}\\
\leq & \frac{K^2}{2\eps \nu} \Vert f\Vert_{L^2(\Omega)}^2+ \frac{\eps \nu}{2} \Vert\nabla \partial_t u_m\Vert_{L^2(\Omega)}^2\nonumber.
\end{align}
Moreover, for $t \geq 0$ we integrate in time and find
\begin{align*}
\Vert\partial_t u_m(t)\Vert_{L^2(\Omega)}^2+c^2 \Vert u_m(t)\Vert_{H^1_0(\Omega)}^2+&\eps \nu\int_0^t \Vert \nabla \partial_t u_m(s)\Vert_{L^2(\Omega)}^2\;ds\\
&\leq C( \Vert f\Vert_{L^2([0,+\infty[;L^2(\Omega))}^2+c^2\Vert u_0\Vert_{H^1_0(\Omega)}^2+\Vert u_1\Vert_{L^2(\Omega)}^2)
\end{align*}
with $C>0$ independent of$t$.
Since $t \geq 0$ was arbitrary, from the lust estimate  it follows
\begin{align}
\max_{0\leq t}(\Vert\partial_t u_m(t)\Vert_{L^2(\Omega)}^2+& \Vert u_m(t)\Vert_{H^1_0(\Omega)}^2)+\int_0^{+\infty} \Vert \nabla \partial_t u_m(s)\Vert_{L^2(\Omega)}^2\;ds\nonumber\\
&\leq C( \Vert f\Vert_{L^2([0,+\infty[;L^2(\Omega))}^2+\Vert u_0\Vert_{H^1_0(\Omega)}^2+\Vert u_1\Vert_{L^2(\Omega)}^2).\label{partaprioridampedGalerk}
\end{align}
We multiply equations~(\ref{Galerkdampedeq}) by $  d^k_m(t) $ and sum over $k=1,...,m$. By definition~(\ref{Galerksol}) of $u_m$  we have
$$(\partial^2_t u_m, u_m)_{L^2(\Omega)}+c^2 (\nabla u_m,\nabla  u_m)_{L^2(\Omega)}+\eps \nu(\nabla \partial_t u_m,\nabla  u_m)_{L^2(\Omega)}=(f, u_m)_{L^2(\Omega)}$$
for a.e. $t \geq 0$.

As for the term $(f,\partial_t u_m)_{L^2(\Omega)}$ (see estimate~(\ref{proofGalerkest1})) we have
\begin{align*}
(f, u_m)_{L^2(\Omega)}
\leq  \frac{K^2}{2c^2 } \Vert f\Vert_{L^2(\Omega)}^2+ \frac{c^2}{2} \Vert\nabla  u_m\Vert_{L^2(\Omega)}^2
\end{align*}
with a constant $K>0$ independent oftime.
Then we find
$$\frac{d}{dt}\Big(\frac{\eps \nu}{2}\Vert \nabla u_m(t)\Vert_{L^2(\Omega)}^2\Big)+\frac{c^2}{2} \Vert \nabla u_m(s)\Vert_{L^2(\Omega)}^2\;ds\leq \frac{K^2}{2c^2}\Vert f\Vert_{L^2(\Omega)}^2-\int_{\Omega} \partial^2_t u_m  u_m\;dx.$$
Integrating over $[0,t]$ it becomes
\begin{align*}
\frac{\eps \nu}{2}\Vert \nabla u_m(t)\Vert_{L^2(\Omega)}^2+ & \frac{c^2}{2}\int_0^t \Vert \nabla u_m(s)\Vert_{L^2(\Omega)}^2\;ds\\
\leq & \frac{\eps \nu}{2}\Vert \nabla u_0\Vert_{L^2(\Omega)}^2+\frac{K^2}{2c^2} \Vert f\Vert_{L^2([0,+\infty[;L^2(\Omega))}^2-\int_0^t \int_{\Omega} \partial^2_t u_m  u_m\;dx\;ds.
\end{align*}
We obtain 
\begin{align*}
-\int_0^t \int_{\Omega} \partial^2_t u_m  u_m\;dx = &  \int_0^t\left[-\frac{d}{dt}\left( \int_{\Omega} \partial_t u_m  u_m\;dx\right)+\int_{\Omega} ( \partial_t u_m)^2\;dx\right]\;ds\\
= & -\int_{\Omega} \partial_t u_m  u_m\;dx+\int_{\Omega} \partial_t u_m(0)  u_m(0)\;dx\\
& +\Vert \partial_t u_m\Vert_{L^2([0,t];L^2(\Omega))}^2,\\
\intertext{so by the Young inequality}
-\int_0^t \int_{\Omega} \partial^2_t u_m u_m\;dx \leq &\frac{1}{2} \Vert \partial_t u_m\Vert_{L^{\infty}([0,t];L^2(\Omega))}^2 +\frac{1}{2} \Vert u_m\Vert_{L^{\infty}([0,t];L^2(\Omega))}^2\\
& +\frac{1}{2} \Vert  \partial_t u_m(0)\Vert_{L^2(\Omega)} +\frac{1}{2}\Vert  u_m(0)\Vert_{L^2(\Omega)}\\
& +\Vert \partial_t u_m\Vert_{L^2([0,t];L^2(\Omega))}^2,\\
\intertext{then by the Poincar\'e inequality and relations for initial conditions (\ref{Galerkinit1})--(\ref{Galrkinit2})}
-\int_0^t \int_{\Omega} \partial^2_t u_m  u_m\;dx \leq &\frac{1}{2} \Vert \partial_t u_m\Vert_{L^{\infty}([0,+\infty[;L^2(\Omega))}^2 +\frac{K^2}{2} \Vert u_m\Vert_{L^{\infty}([0,+\infty[;H^1_0(\Omega))}^2\\
& +\frac{1}{2} \Vert  u_1\Vert_{L^2(\Omega)}^2 +\frac{K^2}{2}\Vert\nabla u_0\Vert_{L^2(\Omega)}^2\\
& +K^2 \Vert \partial_t u_m\Vert_{L^2([0,+\infty[;H^1_0(\Omega))}^2
\intertext{with a constant $K>0$ depending only on $\Omega$. Thus by estimate (\ref{partaprioridampedGalerk}) we find}
-\int_0^t \int_{\Omega} \partial^2_t u_m  u_m\;dx\leq & C ( \Vert f\Vert_{L^2([0,+\infty[;L^2(\Omega))}^2+\Vert\nabla u_0\Vert_{L^2(\Omega)}^2+\Vert  u_1\Vert_{L^2(\Omega)}^2)
\end{align*}
with a constant $C>0$ independent of$t$.
Consequently, we  deduce the existence of $C>0$ (all generic constants are always denoted by $C$) such that
\begin{equation}\label{partaprioridampedGalerk2}
\int_0^{+\infty} \Vert \nabla u_m(s)\Vert_{L^2(\Omega)}^2\;ds\leq C ( \Vert f\Vert_{L^2([0,+\infty[;L^2(\Omega))}^2+\Vert\nabla u_0\Vert_{L^2(\Omega)}^2+\Vert  u_1\Vert_{L^2(\Omega)}^2)
\end{equation}

Fixing any $v\in H^1_0(\Omega)$ with $\Vert v\Vert_{H^1_0(\Omega)}\leq 1$, we decompose $v=v^{1}+v^{\perp}$ in a part $v^{1}\in span\lbrace w_k\rbrace_{k=1}^m$ and its orthogonal component $(v^{\perp},w_k)_{L^2(\Omega)}=0$ $(k=1,..,m)$. Let us notice that $\Vert v^1\Vert_{H^1_0(\Omega)}\leq 1$. Then Eqs.~(\ref{Galerksol}) and~(\ref{Galerkdampedeq}) imply 
\begin{align*}
\langle\partial^2_t u_m,v\rangle_{(H^{-1}(\Omega),H^1_0(\Omega))}&=(\partial^2_t u_m,v)_{L^2(\Omega)}=(\partial^2_t u_m,v^{1})_{L^2(\Omega)}\\
&=(f,v^{1})_{L^2(\Omega)}-c^2 (\nabla u_m,\nabla v^{1})_{L^2(\Omega)}-\eps \nu(\nabla\partial_t u_m,\nabla v^{1})_{L^2(\Omega)}.
\end{align*}
Thus, by the Cauchy-Schwartz inequality and the Poincar\'e inequality
\begin{multline*}
\vert \langle\partial^2_t u_m,v\rangle_{(H^{-1}(\Omega),H^1_0(\Omega))}\vert\leq  \Vert f\Vert_{L^2(\Omega)} \Vert v^{1}\Vert_{L^2(\Omega)}+c^2 \int_{\Omega} \vert \nabla u_m \nabla v^{1}\vert\;dx\\+\eps \nu\int_{\Omega} \vert \nabla\partial_t u_m \nabla v^{1}\vert\;dx 
\leq  C \Vert f\Vert_{L^2(\Omega)} \Vert v^{1}\Vert_{H^1_0(\Omega)}+ \Vert u_m\Vert_{H^1_0(\Omega)} \Vert v^{1}\Vert_{H^1_0(\Omega)}\\+\Vert\partial_t u_m\Vert_{H^1_0(\Omega)} \Vert v^{1}\Vert_{H^1_0(\Omega)},
\end{multline*}
then since $\Vert v^{1}\Vert_{H^1_0(\Omega)}\leq 1$,
\begin{multline*}
\vert \langle\partial^2_t u_m,v\rangle_{(H^{-1}(\Omega),H^1_0(\Omega))}\vert\leq C  (\Vert f\Vert_{L^2(\Omega)}+ \Vert u_m\Vert_{H^1_0(\Omega)}+\Vert\partial_t u_m\Vert_{H^1_0(\Omega)} ).
\end{multline*}
Consequently, by estimates~(\ref{partaprioridampedGalerk}) and (\ref{partaprioridampedGalerk2}) we deduce
\begin{align*}
\int_0^{+\infty} \Vert \partial^2_t u_m\Vert_{H^{-1}(\Omega)}^2dt\leq & C \int_0^{+\infty} (\Vert f\Vert_{L^2(\Omega)}^2+ \Vert u_m\Vert_{H^1_0(\Omega)}^2+\Vert\partial_t u_m\Vert_{H^1_0(\Omega)}) \;dt\\
\leq & C ( \Vert f\Vert_{L^2([0,+\infty[;L^2(\Omega))}^2+\Vert u_0\Vert_{H^1_0(\Omega)}^2+\Vert u_1\Vert_{L^2(\Omega)}^2),
\end{align*}
and, combining with estimates~(\ref{partaprioridampedGalerk}) and (\ref{partaprioridampedGalerk2}), we obtain~(\ref{aprioriGalerkdamped}). 
\end{proof}
Thus we establish the following well-posedness result: 
\begin{theorem}\label{ThExLinW}
For all arbitrary bounded domain $\Omega$, there exists a unique weak solution of the damped wave equation problem~(\ref{dampedwaveeqdirhom}) with the homogeneous Dirichlet boundary condition in the sense of Definition~\ref{defweakdampwav}.
\end{theorem}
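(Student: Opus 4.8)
The plan is to finish the Galerkin scheme by passing to the limit $m\to\infty$ in the approximate problem and then to prove uniqueness by a linear energy argument. First I would invoke the uniform bound~(\ref{aprioriGalerkdamped}): it shows that $(u_m)$ is bounded in $L^\infty([0,+\infty[;H^1_0(\Omega))$, that $(\partial_t u_m)$ is bounded in $L^\infty([0,+\infty[;L^2(\Omega))\cap L^2([0,+\infty[;H^1_0(\Omega))$, and that $(\partial^2_t u_m)$ is bounded in $L^2([0,+\infty[;H^{-1}(\Omega))$. Since the $L^2$-in-time spaces are reflexive and the $L^\infty$-in-time spaces are dual spaces, Banach--Alaoglu provides a subsequence, still denoted $(u_m)$, and a limit $u$ with $u_m\rightharpoonup u$ in $L^2([0,+\infty[;H^1_0(\Omega))$, $\partial_t u_m\rightharpoonup \partial_t u$ in $L^2([0,+\infty[;H^1_0(\Omega))$ and $\partial^2_t u_m\rightharpoonup \partial^2_t u$ in $L^2([0,+\infty[;H^{-1}(\Omega))$, the identification of the weak limits of the time derivatives being justified by testing against $C^\infty_c$ functions of $t$. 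The limit $u$ therefore already lies in the regularity class of Definition~\ref{defweakdampwav}.

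Next I would pass to the limit in the Galerkin identity~(\ref{Galerkdampedeq}). Fix an integer $N$ and a test function of the form $v(t,x)=\sum_{k=1}^{N}\phi_k(t)w_k(x)$ with $\phi_k\in C^\infty_c([0,+\infty[)$; for $m\ge N$, multiplying~(\ref{Galerkdampedeq}) by $\phi_k$, summing over $k\le N$ and integrating in time yields~(\ref{varformdampedwaveeqdirhom}) with $u_m$ in place of $u$. Each term is linear and continuous for the weak topologies above, so the weak convergences let me pass to the limit and obtain~(\ref{varformdampedwaveeqdirhom}) for every such $v$. Since finite combinations $\sum_k\phi_k(t)w_k(x)$ are dense in $L^2([0,+\infty[;H^1_0(\Omega))$ — using that $(w_k)$ is an orthogonal basis of $H^1_0(\Omega)$ by~(\ref{ortgbasH10}) — a density argument extends~(\ref{varformdampedwaveeqdirhom}) to all admissible $v$. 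To recover the initial data I would use that $\partial_t u\in L^2([0,+\infty[;H^1_0(\Omega))$ with $\partial^2_t u\in L^2([0,+\infty[;H^{-1}(\Omega))$ forces $\partial_t u\in C([0,+\infty[;L^2(\Omega))$, and that $u\in H^1_{loc}([0,+\infty[;H^1_0(\Omega))\hookrightarrow C([0,+\infty[;H^1_0(\Omega))$ (see~\cite{LIONS-1972,EVANS-2010}), so that $u(0)$ and $\partial_t u(0)$ are well defined; comparing the integration by parts in~(\ref{varformdampedwaveeqdirhom}) against test functions with $\phi_k(0)\ne 0$ with the limits of the Galerkin initial conditions~(\ref{Galerkinit1})--(\ref{Galrkinit2}) gives $u(0)=u_0$ and $\partial_t u(0)=u_1$.

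For uniqueness I would exploit linearity: if $u^{(1)},u^{(2)}$ are two weak solutions, their difference $w=u^{(1)}-u^{(2)}$ solves~(\ref{varformdampedwaveeqdirhom}) with $f=0$ and zero initial data. Because $\partial_t w\in L^2([0,+\infty[;H^1_0(\Omega))$ is itself an admissible test function, I would test with $v=\partial_t w$ on $[0,t]$. Using the Lions lemma that $s\mapsto\norm{\partial_t w(s)}_{L^2(\Omega)}^2$ and $s\mapsto\norm{\nabla w(s)}_{L^2(\Omega)}^2$ are absolutely continuous with derivatives $2\langle\partial^2_t w,\partial_t w\rangle$ and $2(\nabla w,\nabla\partial_t w)_{L^2(\Omega)}$, the identity reduces to the energy balance
\begin{equation*}
\frac12\norm{\partial_t w(t)}_{L^2(\Omega)}^2+\frac{c^2}{2}\norm{\nabla w(t)}_{L^2(\Omega)}^2+\eps\nu\int_0^t\norm{\nabla\partial_t w(s)}_{L^2(\Omega)}^2\,ds=0,
\end{equation*}
since the initial energy vanishes. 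As every term is nonnegative, $w\equiv0$, which proves uniqueness.

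The step I expect to be the main obstacle is the rigorous justification of the energy identity used for uniqueness (and, implicitly, in recovering the initial data): testing with $\partial_t w$ is only formal unless one first establishes that the pairing $\langle\partial^2_t w,\partial_t w\rangle$ integrates to $\tfrac12\tfrac{d}{dt}\norm{\partial_t w}_{L^2(\Omega)}^2$, which requires the continuity-in-time regularity $\partial_t w\in C([0,+\infty[;L^2(\Omega))$ together with the Lions--Magenes absolute-continuity lemma for the Gelfand triple $H^1_0(\Omega)\hookrightarrow L^2(\Omega)\hookrightarrow H^{-1}(\Omega)$. Everything else is a routine weak-compactness and density argument valid on an arbitrary bounded domain, since only the spectral decomposition of the Dirichlet Laplacian and the Poincar\'e inequality — both available here — are used.
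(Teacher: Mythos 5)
Your proposal is correct and follows essentially the same route as the paper: weak compactness from the uniform bound~(\ref{aprioriGalerkdamped}) to extract limits $u_{m_l}\rightharpoonup u$, $\partial_t u_{m_l}\rightharpoonup \partial_t u$ in $L^2([0,+\infty[;H^1_0(\Omega))$ and $\partial^2_t u_{m_l}\rightharpoonup \partial^2_t u$ in $L^2([0,+\infty[;H^{-1}(\Omega))$, passage to the limit in the Galerkin identity (the paper delegates this and the recovery of the initial data to the wave-equation argument in Evans, which is exactly the finite-combination/density scheme you spell out), and uniqueness by testing the homogeneous problem with $v=\partial_t w$ — admissible here precisely because the strong damping puts $\partial_t w$ in $L^2([0,+\infty[;H^1_0(\Omega))$ — yielding the same vanishing energy identity. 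Your explicit appeal to the Lions--Magenes absolute-continuity lemma to justify the energy identity is a detail the paper leaves implicit, but it is the right justification and does not change the argument.
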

\begin{proof}
Let us start to prove the existence.
According to the energy estimates~(\ref{aprioriGalerkdamped}), we see that the Galerkin sequences $\lbrace u_m\rbrace_{m=1}^{\infty}$ and $\lbrace \partial_t u_m\rbrace_{m=1}^{\infty}$ are bounded in $L^2([0,+\infty[;H^1_0(\Omega))$ and the sequence $\lbrace \partial^2_t u_m\rbrace_{m=1}^{\infty}$ is bounded in $L^2([0,+\infty[;H^{-1}(\Omega))$.
Then there exit a subsequence $\lbrace u_{m_l}\rbrace_{l=1}^{\infty}\subset \lbrace u_m\rbrace_{m=1}^{\infty}$ and $u\in L^2([0,+\infty[;H^1_0(\Omega))$ with $\partial_t u\in L^2([0,+\infty[;H^1_0(\Omega))$, $\partial^2_t u\in L^2([0,+\infty[;H^{-1}(\Omega))$ and such that
\begin{equation}\label{convuGalerkdamped}
\left\lbrace
\begin{array}{l}
u_{m_l}\rightharpoonup u\;\;\;\text{weakly in }L^2([0,+\infty[;H^1_0(\Omega)),\\
\partial_t u_{m_l}\rightharpoonup \partial_t u\;\;\;\text{weakly in }
L^2([0,+\infty[;H^1_0(\Omega)),\\
\partial^2_t u_{m_l}\rightharpoonup \partial^2_t u\;\;\;\text{weakly in }
L^2([0,+\infty[;H^{-1}(\Omega)).
\end{array}
\right.
\end{equation}
In the same way that for the wave equation problem (see Ref.~\cite{EVANS-1994} p. 384) we show that $u$ is a weak solution of the damped wave equation problem (\ref{dampedwaveeqdirhom}).

Let us prove the unicity.
By linearity, we need to show that the only weak solution of~(\ref{dampedwaveeqdirhom}) with $f\equiv 0$ and $ u_0\equiv u_1 \equiv 0$ is 
$ u\equiv 0.$
To prove it we take $\partial_t u\in L^2([0,+\infty[;H^1_0(\Omega))$. By definition of $u$ as a weak solution of the damped wave equation problem (\ref{dampedwaveeqdirhom}), for $T \geq 0$
$$\int_0^{T}\langle \partial^2_t u,\partial_t u\rangle_{(H^{-1}(\Omega),H^1_0(\Omega))}+c^2(\nabla u,\nabla \partial_t u)_{L^2(\Omega)}+\eps \nu(\nabla\partial_t u,\nabla \partial_t u)_{L^2(\Omega)}\;dt=0.$$
Then 
$$\frac{1}{2}\Vert \partial_t u(T)\Vert_{L^2(\Omega)}^2+\frac{c^2}{2} \Vert \nabla u(T)\Vert_{L^2(\Omega)}^2 +\int_0^T\Vert \nabla \del_t u(s)\Vert_{L^2(\Omega)}ds=0,$$
which implies $\nabla \del_t u\equiv 0$ and thus $\nabla u\equiv 0$, as $\nabla u_0=0$. Consequently, $u\equiv 0$, since $u\in H^1_0(\Omega)$ as a weak solution.
\end{proof}
\subsection{Maximal regularity results.}
\subsubsection{Homogeneous Dirichlet boundary condition.}\label{subsecMaxDirH}
We start by showing the regularity of the weak solution obtained in Theorem~\ref{ThExLinW}.
\begin{theorem}\label{dampedwaveeqregint1}
Let $\Omega$ be an arbitrary bounded domain in $\mathbb{R}^n$ ($n\geq2$) and $u$ be the weak solution of  problem (\ref{dampedwaveeqdirhom}) in the sense of~(\ref{varformdampedwaveeqdirhom}).

Then
\\(i) it has in addition the following regularity
$$u\in L^{\infty}([0,+\infty[;H^1_0(\Omega)),\;\;\partial_t u\in L^{\infty}([0,+\infty[;L^2(\Omega))$$
and satisfies the  estimate
\begin{eqnarray}
\operatorname{ess} \sup_{t \geq 0}(\Vert u(t)\Vert_{H^1_0(\Omega)} && +\Vert\partial_t u(t)\Vert_{L^2(\Omega)})+\int_0^{+\infty} \Vert \nabla\partial_t u(s)\Vert_{L^2(\Omega)}\;ds+ \Vert\partial^2_t u\Vert_{ L^2([0,+\infty[;H^{-1}(\Omega))}\nonumber\\
&&\leq C ( \Vert f\Vert_{L^2([0,+\infty[;L^2(\Omega))}+\Vert u_0\Vert_{H^1_0(\Omega)}+\Vert u_1\Vert_{L^2(\Omega)}).\nonumber
\end{eqnarray}
(ii) If the initial data are taken more regular
$$ \Delta u_0\in L^2(\Omega),\;\;u_1\in H^1_0(\Omega),$$ 
then the solution satisfies
\begin{eqnarray}
&&u\in L^{\infty}([0,+\infty[;H^1_0(\Omega))\cap L^2([0,+\infty[;H^1_0(\Omega))\cap L^{2}([0,+\infty[;H^2_{loc}(\Omega)) ,\nonumber\\
&&\partial_t u\in L^{\infty}([0,+\infty[;H^1_0(\Omega))\cap L^2([0,+\infty[;H^1_0(\Omega))\cap L^{2}([0,+\infty[;H^2_{loc}(\Omega)), \nonumber\\
&&\partial^2_t u\in L^{2}([0,+\infty[;L^2(\Omega)),\nonumber\\
&&\Delta u \in L^{\infty}([0,+\infty[;L^2(\Omega))\cap L^2( [0,+\infty[;L^2(\Omega)),\nonumber\\
&&\Delta \partial_t u \in L^2( [0,+\infty[;L^2(\Omega))\nonumber
\end{eqnarray}
with the following a priori estimates
\begin{eqnarray}
\operatorname{ess}\sup_{0 \leq t}(\Vert\Delta u(t)\Vert_{L^2(\Omega)}^2&&+ \Vert \nabla\partial_t u(t)\Vert_{L^2(\Omega)}^2 )+\int_0^{\infty}\Vert\Delta\partial_t u(s)\Vert_{L^2(\Omega)}^2 \;ds \nonumber\\
\leq && C ( \Vert f\Vert_{L^2([0,+\infty[;L^2(\Omega))}^2+\Vert\Delta u_0\Vert_{L^2(\Omega)}^2+\Vert \nabla u_1\Vert_{L^2(\Omega)}^2)\label{aprioriglobdampedwave}
\end{eqnarray}
and 
\begin{equation}\label{aprioriglobdampedwavebis}
\int_0^{+\infty} \Vert \Delta u(s)\Vert_{L^2(\Omega)}^2\;ds\leq C ( \Vert f\Vert_{L^2([0,+\infty[;L^2(\Omega))}^2+\Vert\Delta u_0\Vert_{L^2(\Omega)}^2+\Vert \nabla u_1\Vert_{L^2(\Omega)}^2) 
\end{equation}
with a constant $C>0$ depending only on $\Omega$.
\end{theorem}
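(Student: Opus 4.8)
The plan is to run the same Galerkin scheme already used for Theorem~\ref{ThExLinW}, exploiting crucially that the basis $\{w_k\}$ consists of Dirichlet eigenfunctions of $-\Delta$ (relation~\eqref{eigenfuncDelta}), so that $-\Delta$ acts diagonally on $\mathrm{span}\{w_1,\dots,w_m\}$ and the functions $-\Delta u_m$ and $-\Delta\del_t u_m$ remain admissible test directions inside the Galerkin space. Part (i) requires no new computation: the uniform bound~\eqref{aprioriGalerkdamped} already controls $\Vert u_m\Vert_{H^1_0(\Omega)}$ and $\Vert\del_t u_m\Vert_{L^2(\Omega)}$ uniformly in time and in $m$, so by weak-$*$ lower semicontinuity of these norms the weak limit $u$ from~\eqref{convuGalerkdamped} inherits $u\in L^\infty([0,+\infty[;H^1_0(\Omega))$ and $\del_t u\in L^\infty([0,+\infty[;L^2(\Omega))$, and taking square roots in~\eqref{aprioriGalerkdamped} yields the stated first-order estimate.

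For part (ii) I would derive two higher-order energy identities at the Galerkin level. Multiplying~\eqref{Galerkdampedeq} by $\lambda_k\del_t d^k_m$ and summing over $k$, and using $(\nabla w_k,\nabla w_l)_{L^2(\Omega)}=\lambda_k\delta_{kl}$ together with $\Vert\Delta u_m\Vert_{L^2(\Omega)}^2=\sum_k\lambda_k^2(d^k_m)^2$ and $\Vert\nabla\del_t u_m\Vert_{L^2(\Omega)}^2=\sum_k\lambda_k(\del_t d^k_m)^2$, gives
$$\tfrac12\tfrac{d}{dt}\big(\Vert\nabla\del_t u_m\Vert_{L^2(\Omega)}^2+c^2\Vert\Delta u_m\Vert_{L^2(\Omega)}^2\big)+\eps\nu\Vert\Delta\del_t u_m\Vert_{L^2(\Omega)}^2=(f,-\Delta\del_t u_m)_{L^2(\Omega)}.$$
Bounding the right-hand side by Cauchy--Schwarz and Young so as to absorb $\tfrac{\eps\nu}{2}\Vert\Delta\del_t u_m\Vert_{L^2(\Omega)}^2$, integrating in time, and noting that the initial projections satisfy $\Vert\Delta u_m(0)\Vert_{L^2(\Omega)}\le\Vert\Delta u_0\Vert_{L^2(\Omega)}$ and $\Vert\nabla\del_t u_m(0)\Vert_{L^2(\Omega)}\le\Vert\nabla u_1\Vert_{L^2(\Omega)}$ (the eigenfunction expansion only truncates), produces the uniform bound~\eqref{aprioriglobdampedwave}.

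The estimate~\eqref{aprioriglobdampedwavebis} on $\int_0^{+\infty}\Vert\Delta u(s)\Vert_{L^2(\Omega)}^2\,ds$ does not follow from the first identity, so I would test instead with $-\Delta u_m$, i.e. multiply~\eqref{Galerkdampedeq} by $\lambda_k d^k_m$ and sum. After collecting the time derivatives into a single bracket this yields
$$\tfrac{d}{dt}\Big((\nabla u_m,\nabla\del_t u_m)_{L^2(\Omega)}+\tfrac{\eps\nu}{2}\Vert\Delta u_m\Vert_{L^2(\Omega)}^2\Big)+\tfrac{c^2}{2}\Vert\Delta u_m\Vert_{L^2(\Omega)}^2\le\tfrac{1}{2c^2}\Vert f\Vert_{L^2(\Omega)}^2+\Vert\nabla\del_t u_m\Vert_{L^2(\Omega)}^2.$$
Integrating over $[0,T]$, the crucial point is that $\int_0^{T}\Vert\nabla\del_t u_m\Vert_{L^2(\Omega)}^2\,ds$ is already globally bounded by~\eqref{aprioriGalerkdamped}, while the boundary contribution $(\nabla u_m(T),\nabla\del_t u_m(T))_{L^2(\Omega)}$ is controlled by the just-obtained bound~\eqref{aprioriglobdampedwave}; this gives~\eqref{aprioriglobdampedwavebis} uniformly in $T$ and $m$.

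Finally I would read off $\del_t^2 u_m=f+c^2\Delta u_m+\eps\nu\Delta\del_t u_m$ to get $\del_t^2 u_m$ bounded in $L^2([0,+\infty[;L^2(\Omega))$, and invoke the interior elliptic regularity of Evans~\cite{EVANS-1994,EVANS-2010} to upgrade the global $L^2(L^2)$ control of $\Delta u$ and $\Delta\del_t u$ to $u,\del_t u\in L^2([0,+\infty[;H^2_{loc}(\Omega))$; a standard weak/weak-$*$ compactness and lower-semicontinuity argument then transfers all Galerkin bounds to the limit $u$. The main obstacle --- and the reason only $H^2_{loc}$ rather than $H^2(\Omega)$ appears --- is the absence of global elliptic regularity up to an arbitrary boundary: the Laplacian estimates are genuinely global in $L^2(\Omega)$, but they convert into Sobolev $H^2$ control only on compactly contained subsets. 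A secondary point to verify carefully is the legitimacy of the test directions $-\Delta u_m$ and $-\Delta\del_t u_m$, which is precisely what forces the use of the eigenfunction basis rather than an arbitrary basis of $H^1_0(\Omega)$.
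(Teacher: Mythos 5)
Your proposal is correct and follows essentially the same route as the paper's proof: part (i) by passing to the weak limit in the first-order Galerkin estimate~(\ref{aprioriGalerkdamped}), and part (ii) via the same two higher-order energy identities obtained by testing the Galerkin system with $-\Delta\partial_t u_m$ and $-\Delta u_m$ (admissible precisely because of the eigenfunction basis), then Evans' interior regularity and weak lower semicontinuity. The only cosmetic deviations are that you fold the cross term $(\nabla u_m,\nabla\partial_t u_m)_{L^2(\Omega)}$ into the time derivative where the paper integrates $\int_0^t\int_\Omega \nabla\partial_t^2 u_m\,\nabla u_m$ by parts in time, and that you bound $\partial_t^2 u_m$ at the Galerkin level (where $f$ should strictly be replaced by its $L^2$-projection onto $\operatorname{span}\{w_1,\dots,w_m\}$, which is a contraction, so the bound stands) rather than for the limit equation as the paper does.
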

\begin{proof}
Passing to limits in~(\ref{aprioriGalerkdamped}) as $m=m_l\rightarrow \infty$, we deduce (\textit{i}).

Assume now the hypothesis of assertion (\textit{ii}). We consider again $u_m$ found in Proposition~\ref{WpGalerksol}. We multiply Eq.~(\ref{Galerkdampedeq}) by $-\lambda_k \partial_t d^k_m(t) $ and sum over $k=1,...,m$. By definitions of $u_m$  and of $\lambda_k$ (see Eqs.~(\ref{Galerksol}) and~(\ref{eigenfuncDelta}) respectively) we have
$$\int_{\Omega}- \partial^2_t u_m \Delta \partial_t u_m -c^2 \nabla u_m\nabla \Delta \partial_t u_m -\eps \nu\nabla \partial_t u_m\nabla \Delta \partial_t u_m\;dx=-(f, \Delta \partial_t u_m)_{L^2(\Omega)}.$$
Thanks to the definition of $u_m$ (see Proposition~\ref{WpGalerksol})
as a linear combination of  Laplacian's eigenfunctions $w_k$,  satisfying~(\ref{ortgbasH10})--(\ref{eigenfuncDelta}), we have $\partial^2_t u_m\in H^1_0(\Omega)$ and $\Delta \partial_t u_m\in H^1_0(\Omega)$. Therefore, by the Green formula we have
$$\int_{\Omega}\nabla \partial^2_t u_m \nabla \partial_t u_m + c^2 \Delta u_m\Delta \partial_t u_m +\eps \nu\Delta \partial_t u_m\Delta \partial_t u_m\;dx=-(f, \Delta \partial_t u_m)_{L^2(\Omega)}.$$
Applying Young's inequality, we estimate the right-hand term
$$\vert (f,\Delta\partial_t u_m)_{L^2(\Omega)}\vert \leq \frac{2}{\eps \nu}\Vert f\Vert_{L^2(\Omega)}^2+\frac{\eps \nu}{2}\Vert \Delta \partial_t u_m\Vert_{L^2(\Omega)}^2.$$
Then we find
$$\frac{d}{dt}\Big(\frac{1}{2}\Vert \nabla\partial_t u_m(t)\Vert_{L^2(\Omega)}^2+\frac{c^2}{2}\Vert \Delta u_m(t)\Vert_{L^2(\Omega)}+\frac{\eps \nu}{2}\int_0^t \Vert \Delta \partial_t u_m(s)\Vert_{L^2(\Omega)}\;ds\Big)\leq \frac{2}{\eps \nu}\Vert f\Vert_{L^2(\Omega)}^2.$$
Integrating over $[0,t]$ we obtain
\begin{align}
\frac{1}{2}\Vert \nabla\partial_t u_m(t)\Vert_{L^2(\Omega)}^2+&\frac{c^2}{2}\Vert \Delta u_m(t)\Vert_{L^2(\Omega)}+\frac{\eps \nu}{2}\int_0^t \Vert \Delta \partial_t u_m(s)\Vert_{L^2(\Omega)}\;ds\label{aprioriglobdampedum}\\
\leq & \frac{2}{\eps \nu} \int_0^t\Vert f(s)\Vert_{L^2(\Omega)}^2\;ds+\frac{1}{2}\Vert \nabla \partial_t u_m(0)\Vert_{L^2(\Omega)}^2+\frac{1}{2}\Vert \Delta u_m(0)\Vert_{L^2(\Omega)}\nonumber\\
\leq & \frac{2}{\eps \nu} \Vert f\Vert_{L^2([0,+\infty[; L^2(\Omega))}^2+\frac{1}{2}\Vert \nabla u_1\Vert_{L^2(\Omega)}^2+\frac{1}{2}\Vert \Delta u_0\Vert_{L^2(\Omega)}\nonumber,
\end{align}
from where  we deduce~(\ref{aprioriglobdampedwave}) taking the weak limit of a subsequence.
Hence
\begin{align*}
&\partial_t u\in L^{\infty}([0,+\infty[;H^1_0(\Omega)) ,\quad \Delta u \in L^{\infty}([0,+\infty[;L^2(\Omega)),\quad \Delta \partial_t u \in L^2( [0,+\infty[;L^2(\Omega)).
\end{align*}
The linearity of the equation implies for all $T\geq 0$ 
$$\partial^2_t u\in L^2([0,T];L^2(\Omega)),$$
as $\Delta u \in L^{\infty}([0,+\infty[;L^2(\Omega))\subset L^2([0,T];L^2(\Omega)) $ for all $T\geq 0$.

Moreover, as $\partial_t u_m\in H^1_0(\Omega)$ and $\Delta \partial_t u_m\in L^2(\Omega)$ by Proposition \ref{propestadmdom} we have
$$\Vert \nabla \partial_t u_m\Vert_{L^2(\Omega)}\leq C \Vert \Delta \partial_t u_m\Vert_{L^2(\Omega)},$$
which implies by estimate (\ref{aprioriglobdampedum})
\begin{align*}
\Vert \nabla \partial_t u_m\Vert_{L^2([0,+\infty[; L^2(\Omega))}\leq & C \Vert \Delta \partial_t u_m\Vert_{L^2([0,+\infty[; L^2(\Omega))}\\
\leq & C ( \Vert f\Vert_{L^2([0,+\infty[;L^2(\Omega))}^2+\Vert\Delta u_0\Vert_{L^2(\Omega)}^2+\Vert \nabla u_1\Vert_{L^2(\Omega)}^2),
\end{align*}
and, taking the weak limit of a subsequence, we result in
\begin{equation}\label{aprioriglobdampedwave1}
\Vert  \partial_t u\Vert_{L^2([0,+\infty[; H^1_0(\Omega))}\leq  C ( \Vert f\Vert_{L^2([0,+\infty[;L^2(\Omega))}^2+\Vert\Delta u_0\Vert_{L^2(\Omega)}^2+\Vert \nabla u_1\Vert_{L^2(\Omega)}^2).
\end{equation}
By Proposition~\ref{propestadmdom} we also have
$$\Vert \nabla  u_m\Vert_{L^2(\Omega)}\leq C \Vert \Delta  u_m\Vert_{L^2(\Omega)},$$
which implies in the same way as for estimate~(\ref{aprioriglobdampedwave1}) 
\begin{equation}\label{aprioriglobdampedwave2}
\Vert  u\Vert_{L^{\infty}([0,+\infty[; H^1_0(\Omega))}\leq  C ( \Vert f\Vert_{L^2([0,+\infty[;L^2(\Omega))}^2+\Vert\Delta u_0\Vert_{L^2(\Omega)}^2+\Vert \nabla u_1\Vert_{L^2(\Omega)}^2).
\end{equation}
This time instead of multiplication of equations~(\ref{Galerkdampedeq}) by $-\lambda_k \del_t d^k_m(t) $, we multiply them by $-\lambda_k  d^k_m(t) $ and sum over $k=1,...,m$. Following the same steps as for the proof of estimate~(\ref{aprioriglobdampedwave1}) we result in 
$$\frac{d}{dt}\Big(\frac{\eps \nu}{2}\Vert \Delta u_m(t)\Vert_{L^2(\Omega)}^2+\frac{c^2}{2}\int_0^t \Vert \Delta u_m(s)\Vert_{L^2(\Omega)}^2\;ds\Big)\leq \frac{2}{c^2}\Vert f\Vert_{L^2(\Omega)}^2-\int_{\Omega}\nabla \partial^2_t u_m \nabla u_m\;dx.$$
Integrating over $[0,t]$ we obtain
\begin{align*}
\frac{\eps \nu}{2}\Vert \Delta u_m(t)\Vert_{L^2(\Omega)}^2+ & \frac{c^2}{2}\int_0^t \Vert \Delta u_m(s)\Vert_{L^2(\Omega)}^2\;ds\\
\leq & \frac{\eps \nu}{2}\Vert \Delta u_0\Vert_{L^2(\Omega)}^2+\frac{2}{c^2} \Vert f\Vert_{L^2([0,+\infty[;L^2(\Omega))}^2-\int_0^t \int_{\Omega}\nabla \partial^2_t u_m \nabla u_m\;dx\;ds.
\end{align*}
We express the last term in the previous inequality as
\begin{align*}
-\int_0^t \int_{\Omega}\nabla \partial^2_t u_m \nabla u_m\;dx = &  \int_0^t\left[-\frac{d}{dt}\left( \int_{\Omega}\nabla \partial_t u_m \nabla u_m\;dx\right)+\int_{\Omega} (\nabla \partial_t u_m)^2\;dx\right]\;ds\\
= & -\int_{\Omega}\nabla \partial_t u_m \nabla u_m\;dx+\int_{\Omega}\nabla \partial_t u_m(0) \nabla u_m(0)\;dx\\
& +\Vert \partial_t u_m\Vert_{L^2([0,t];H^1_0(\Omega))}^2,\\
\intertext{and obtain by Young's inequality}
-\int_0^t \int_{\Omega}\nabla \partial^2_t u_m \nabla u_m\;dx \leq &\frac{1}{2} \Vert \partial_t u_m\Vert_{L^{\infty}([0,t];H^1_0(\Omega))}^2 +\frac{1}{2} \Vert u_m\Vert_{L^{\infty}([0,t];H^1_0(\Omega))}^2\\
& +\frac{1}{2} \Vert \nabla \partial_t u_m(0)\Vert_{L^2(\Omega)} +\frac{1}{2}\Vert \nabla u_m(0)\Vert_{L^2(\Omega)}\\
& +\Vert \partial_t u_m\Vert_{L^2([0,t];H^1_0(\Omega))}^2\\
\leq &\frac{1}{2} \Vert \partial_t u_m\Vert_{L^{\infty}([0,+\infty[;H^1_0(\Omega))}^2 +\frac{1}{2} \Vert u_m\Vert_{L^{\infty}([0,+\infty[;H^1_0(\Omega))}^2\\
& +\frac{1}{2} \Vert \nabla u_1\Vert_{L^2(\Omega)} +\frac{1}{2}\Vert \nabla u_0\Vert_{L^2(\Omega)} +\Vert \partial_t u_m\Vert_{L^2([0,+\infty[;H^1_0(\Omega))}^2.
\intertext{Using now estimates (\ref{aprioriglobdampedum}),  (\ref{aprioriglobdampedwave1}) and (\ref{aprioriglobdampedwave2}) we find}
-\int_0^t \int_{\Omega}\nabla \partial^2_t u_m \nabla u_m\;dx\leq & C ( \Vert f\Vert_{L^2([0,+\infty[;L^2(\Omega))}^2+\Vert\Delta u_0\Vert_{L^2(\Omega)}^2+\Vert \nabla u_1\Vert_{L^2(\Omega)}^2)
\end{align*}
with $C>0$ independent of$t$.
Thus, we can deduce 
$$\int_0^{+\infty} \Vert \Delta u_m(s)\Vert_{L^2(\Omega)}^2\;ds\leq C ( \Vert f\Vert_{L^2([0,+\infty[;L^2(\Omega))}^2+\Vert\Delta u_0\Vert_{L^2(\Omega)}^2+\Vert \nabla u_1\Vert_{L^2(\Omega)}^2)  $$
and taking a convergent subsequence we find in the limit
$$\int_0^{+\infty} \Vert \Delta u(s)\Vert_{L^2(\Omega)}^2\;ds\leq C ( \Vert f\Vert_{L^2([0,+\infty[;L^2(\Omega))}^2+\Vert\Delta u_0\Vert_{L^2(\Omega)}^2+\Vert \nabla u_1\Vert_{L^2(\Omega)}^2) . $$
The linearity of the equation gives us $\partial^2_tu\in L^2([0,+\infty[;L^2(\Omega))$, since
$$\Vert \partial^2_tu\Vert_{L^2([0,+\infty[;L^2(\Omega))}\leq C (\Vert f \Vert_{L^2([0,+\infty[;L^2(\Omega))}+\Vert \Delta u \Vert_{L^2([0,+\infty[;L^2(\Omega))}+\Vert \Delta \partial u \Vert_{L^2([0,+\infty[;L^2(\Omega))}).$$
By Ref.~\cite{EVANS-1994}  Theorem~1 p.~309, we can also deduce
$$\partial_t u\in L^2([0,+\infty[;H^2_{loc}(\Omega))$$
with the estimate on $V\subset\subset\Omega$
$$\Vert \partial_t u\Vert_{L^2([0;+\infty[;H^2(V))}\leq C \Vert\Delta \partial_t u\Vert_{L^2([0;+\infty[;L^2(\Omega))},$$
which implies
$$\Vert \partial_t u\Vert_{L^2((0;+\infty);H^2(V))}^2\leq C ( \Vert f\Vert_{L^2([0,+\infty[;L^2(\Omega))}^2+\Vert\Delta u_0\Vert_{L^2(\Omega)}^2+\Vert \nabla u_1\Vert_{L^2(\Omega)}^2).$$
Repeating the previous argument it is sufficient to obtain
$$u\in L^{\infty}([0,+\infty[;H^2_{loc}(\Omega))\cap L^{2} ([0,+\infty[;H^2_{loc}(\Omega)),$$
as  $\Delta u \in L^{\infty}([0,+\infty[;L^2(\Omega))\cap L^{2}([0,+\infty[;L^2(\Omega))$, which finishes the proof.
\end{proof}
Let us now define the domain of the weak Dirichlet Laplacian operator, which we use to improve the regularity of solutions for which as it was mentioned we don't have $H^2$-regularity. 
\begin{definition}[Weak Laplacian]\label{DefOpA}
The Laplacian operator $-\Delta$ 
on $\Omega$ is considered  with homogeneous Dirichlet boundary conditions in the weak sense: 
\begin{align*}
-\Delta:\mathcal{D}(-\Delta)\subset H^1_0(\Omega)&\rightarrow L^2(\Omega) \\
u&\mapsto -\Delta u.
\end{align*}
Here $\mathcal{D}(-\Delta)$ is the domain of $-\Delta$, defined in the following way: $u\in \mathcal{D}(-\Delta)$ if and only if $u\in H^1_0(\Omega)$ and $-\Delta u\in L^2(\Omega)$ in the sense that there exists $f\in L^2(\Omega)$ such that
$$\forall v \in H^1_0(\Omega)\quad (\nabla u,\nabla v)_{L^2(\Omega)}=(f,v)_{L^2(\Omega)} .$$
The operator $-\Delta$ is linear self-adjoint and coercive in the sense that for $u\in \mathcal{D}(-\Delta)$
$$(-\Delta u, u)_{L^2(\Omega)}=(\nabla u,\nabla u)_{L^2(\Omega)} $$
 and we use the  notation 
$$\Vert u\Vert_{ \mathcal{D}(-\Delta)}=\Vert \Delta u\Vert_{L^2(\Omega)} \quad \hbox{for } u\in \mathcal{D}(-\Delta).$$
\end{definition}
In addition we need to introduce the following space:
\begin{definition}\label{DefSpaceX}
For $U$ an open set we define the Hilbert space
\begin{equation}\label{Hspace}
X(U)=H^1([0,+\infty[;\mathcal{D}(-\Delta))\cap H^2([0,+\infty[;L^2(U)).
\end{equation}
with the norm
$$\Vert u\Vert_{X(U)}^2=\Vert \Delta u\Vert_{L^2([0,+\infty[;L^2(U))}^2 +\Vert \Delta \partial_t u\Vert_{L^2([0,+\infty[;L^2(U))}^2+\Vert \partial_t^2 u\Vert_{L^2([0,+\infty[;L^2(U))}^2$$ associated to the scalar product
$$(u,v)_{X(U)}=\int_0^{+\infty}(\Delta u,\Delta v)_{L^2(U)}+(\Delta \partial_t u,\Delta \partial_t v)_{L^2(U)}+(\partial^2_t u,\partial^2_t v)_{L^2(U)}ds.$$
\end{definition}

\begin{theorem}\label{thmeqivcaudampwav}
 For $\Omega$ an arbitrary bounded domain in $\mathbb{R}^n$ 
 and
 \begin{equation}\label{HspaceX0}
 X_0(\Omega):= \lbrace u\in X(\Omega)\vert u(0)=0,\;\partial_t u(0)=0\rbrace 
 \end{equation}
there exists a unique weak solution of the Dirichlet homogeneous problem for the strongly damped wave equation with also homogeneous initial conditions, in the sense of the variational formulation~(\ref{varformdampedwaveeqdirhom}),
$u\in X_0(\Omega)$ 
if and only if
$f\in L^2([0,+\infty[;L^2(\Omega))$. 

Moreover we have the estimate
\begin{equation}\label{EqAElinhom}
	\Vert u\Vert_{X(\Omega)}\leq C \Vert f\Vert_{L^2([0,+\infty[;L^2(\Omega))}.
\end{equation}
\end{theorem}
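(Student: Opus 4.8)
The plan is to read this statement as a maximal-regularity isomorphism: the affine-linear map $u\mapsto \del_t^2 u-c^2\Delta u-\eps\nu\Delta\del_t u$ sends $X_0(\Omega)$ bijectively onto $L^2([0,+\infty[;L^2(\Omega))$, and inequality~(\ref{EqAElinhom}) expresses the boundedness of its inverse. I would prove the two implications separately, both as essentially direct consequences of the results already established in Subsections~\ref{subsecweakwpdampwavedir} and~\ref{subsecMaxDirH}, so that little genuinely new work remains.

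First, the necessity ($u\in X_0(\Omega)\Rightarrow f\in L^2$). Suppose $u\in X_0(\Omega)$ solves~(\ref{varformdampedwaveeqdirhom}) for some right-hand side. By Definition~\ref{DefSpaceX} each of $\Delta u$, $\Delta\del_t u$ and $\del_t^2 u$ already lies in $L^2([0,+\infty[;L^2(\Omega))$. Since $u(t),\del_t u(t)\in\mathcal D(-\Delta)\subset H^1_0(\Omega)$, the definition of the weak Dirichlet Laplacian (Definition~\ref{DefOpA}) lets me replace $(\nabla u,\nabla v)_{L^2(\Omega)}$ by $(-\Delta u,v)_{L^2(\Omega)}$ and likewise for $\del_t u$, while $\del_t^2 u\in L^2(\Omega)$ turns the $(H^{-1},H^1_0)$ pairing into an $L^2$ scalar product. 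The variational identity then becomes $\int_0^{+\infty}(\del_t^2 u-c^2\Delta u-\eps\nu\Delta\del_t u-f,\,v)_{L^2(\Omega)}\,\ds=0$ for every $v\in L^2([0,+\infty[;H^1_0(\Omega))$, and density of $H^1_0(\Omega)$ in $L^2(\Omega)$ forces $f=\del_t^2 u-c^2\Delta u-\eps\nu\Delta\del_t u$ almost everywhere, hence $f\in L^2([0,+\infty[;L^2(\Omega))$.

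Second, the sufficiency together with the estimate. Given $f\in L^2([0,+\infty[;L^2(\Omega))$, Theorem~\ref{ThExLinW} supplies a unique weak solution for the homogeneous initial data $u_0=0$, $u_1=0$. Because $\Delta u_0=0\in L^2(\Omega)$ and $u_1=0\in H^1_0(\Omega)$, the hypotheses of Theorem~\ref{dampedwaveeqregint1}(ii) hold, and its conclusions furnish exactly the three ingredients of the $X(\Omega)$-norm: $\del_t^2 u\in L^2([0,+\infty[;L^2(\Omega))$ and $\Delta\del_t u\in L^2([0,+\infty[;L^2(\Omega))$, while~(\ref{aprioriglobdampedwavebis}) gives $\Delta u\in L^2([0,+\infty[;L^2(\Omega))$. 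I would then check that these memberships amount precisely to $u\in H^1([0,+\infty[;\mathcal D(-\Delta))\cap H^2([0,+\infty[;L^2(\Omega))$: $\Delta u,\Delta\del_t u\in L^2$ say that $u,\del_t u\in L^2([0,+\infty[;\mathcal D(-\Delta))$, whereas $u,\del_t u,\del_t^2 u\in L^2([0,+\infty[;L^2(\Omega))$ (the first two from the $H^1_0$-regularity in (ii)) give the $H^2$-in-time component. As $H^2([0,+\infty[;L^2(\Omega))\hookrightarrow C^1([0,+\infty[;L^2(\Omega))$, the time traces $u(0),\del_t u(0)$ are well defined, so the homogeneous initial conditions place $u$ in $X_0(\Omega)$. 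Finally, setting $u_0=u_1=0$ in~(\ref{aprioriglobdampedwave}), in~(\ref{aprioriglobdampedwavebis}), and in the bound $\Vert\del_t^2 u\Vert_{L^2([0,+\infty[;L^2(\Omega))}\le C(\Vert f\Vert+\Vert\Delta u\Vert+\Vert\Delta\del_t u\Vert)$, and summing the three, yields $\Vert u\Vert_{X(\Omega)}\le C\Vert f\Vert_{L^2([0,+\infty[;L^2(\Omega))}$, which is~(\ref{EqAElinhom}).

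The only point requiring real care — the main obstacle — is verifying that the regularity delivered by Theorem~\ref{dampedwaveeqregint1}(ii) lines up exactly with membership in $X_0(\Omega)$ with nothing left over, in particular the identification $u\in L^2([0,+\infty[;\mathcal D(-\Delta))$ (so that $\Delta u$ is the weak \emph{Dirichlet} Laplacian of Definition~\ref{DefOpA} and not merely a distributional Laplacian) and the well-definedness of the time traces $u(0),\del_t u(0)$. Uniqueness in $X_0(\Omega)$ then comes for free: any element of $X_0(\Omega)$ solving~(\ref{varformdampedwaveeqdirhom}) is in particular a weak solution in the sense of Definition~\ref{defweakdampwav}, to which the uniqueness part of Theorem~\ref{ThExLinW} applies.
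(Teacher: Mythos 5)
Your proof is correct and takes essentially the same route as the paper's own (much terser) argument: sufficiency by combining Theorem~\ref{ThExLinW} with Theorem~\ref{dampedwaveeqregint1}(ii) and the estimates~(\ref{aprioriglobdampedwave})--(\ref{aprioriglobdampedwavebis}) specialized to $u_0=u_1=0$, and necessity by reading $f=\partial_t^2u-c^2\Delta u-\eps\nu\Delta\partial_t u\in L^2([0,+\infty[;L^2(\Omega))$ off the regularity of $u\in X_0(\Omega)$. Your extra verifications (identification with the weak Dirichlet Laplacian of Definition~\ref{DefOpA}, time traces via $H^2([0,+\infty[;L^2(\Omega))\hookrightarrow C^1([0,+\infty[;L^2(\Omega))$, uniqueness via Theorem~\ref{ThExLinW}) simply make explicit what the paper leaves implicit.
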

\begin{proof}
  If $f\in L^2([0,+\infty[;L^2(\Omega))$, $u_0\in \mathcal{D}(-\Delta)$ and $u_1\in H^1_0(\Omega)$, by Theorems~\ref{ThExLinW} and~\ref{dampedwaveeqregint1} there exists a unique $u$ weak solution of problem (\ref{dampedwaveeqdirhom}) in the sense of~(\ref{varformdampedwaveeqdirhom}) with $\partial^2_t u$, $\Delta u$ and $\Delta\partial_t u$ in $L^2([0,+\infty[;L^2(\Omega))$. The estimates~(\ref{aprioriglobdampedwave}) and (\ref{aprioriglobdampedwavebis}) are also satisfied which implies $u\in X(\Omega)$ with the desired estimate.
Conversely, let us consider a weak solution $u\in H^1([0,+\infty[;\mathcal{D}(-\Delta))\cap H^2([0,+\infty[;L^2(\Omega))$ with $u(0)=0$, $\partial_t u(0)=0$ of the homogeneous problem~(\ref{dampedwaveeqdirhom}) for the strongly damped wave equation. By linearity $u$ is unique and, by regularity of $u$, we have
$f\in L^2([0,+\infty[;L^2(\Omega))$. 
\end{proof}
\subsubsection{Non homogeneous Dirichlet boundary condition.}\label{subssecWPnHLin}
In this Subsection $\Omega$ is an admissible domain in $\mathbb{R}^n$ ($n=2$ or $3$) with a $d-$set boundary $\partial\Omega$ for $n-2<d<n$ and
$-\Delta$ is the weak Dirichlet Laplacian operator from Definition~\ref{DefOpA}. 

The fact to have non homogeneous Dirichlet boundary conditions implies the use of traces and extensions operators which leads us to leave the field of arbitrary domains.
\begin{theorem}\label{strondampwavedirnonhom}
Let $\beta_2=2-\frac{n-d}{2}>0$ and
\begin{equation}\label{regtrspacedir}
F:=H^{1}([0,+\infty[;B^{2,2}_{\beta_2}(\partial\Omega))\cap H^{\frac{7}{4}}([0,+\infty[;L^2(\partial\Omega)).
\end{equation}
For $u_0\in H^2(\Omega)$, $u_1\in H^1(\Omega)$, $g\in F$ and $f\in L^2([0,+\infty[;L^2(\Omega))$
with the compatibility conditions
$$g(0)=Tr_{\partial\Omega}u_0,\quad  \partial_tg(0)=Tr_{\partial\Omega} u_1$$
 there exists a unique weak solution $\tilde{u}$ of the problem
\begin{equation}\label{cauchydampwavdirnonhom}
\left\lbrace
\begin{array}{l}
\partial_t^2 \tilde{u}-c^2 \Delta \tilde{u}-\nu \varepsilon \Delta \partial_t \tilde{u}=f\hbox{ in }[0,+\infty[\times\Omega,\\
\tilde{u}=g \hbox{ on }\partial\Omega,\\
\tilde{u}(0)=u_0,\;\;\partial_t \tilde{u}(0)=u_1.
\end{array}
\right.
\end{equation}
It is a weak solution in such a way that $\tilde{u}=u^*+\overline{g}$ with
\begin{equation}\label{spacesolexttrace}
\overline{g}\in X_1(\Omega):= H^2([0,+\infty[;L^2(\Omega))\cap H^1([0,+\infty[;H^2(\Omega)),
\end{equation}
such that
$Tr_{\partial\Omega}\overline{g}=g $
 and 
with $u^*\in X(\Omega)$ (see~(\ref{Hspace}) for the definition of the space $X(\Omega)$), which  is the unique weak solution of the system
\begin{equation}\label{syststaru}
\left\lbrace
\begin{array}{l}
\partial^2_t u+c^2(-\Delta) u+\nu \varepsilon\partial_t (-\Delta) u =f-\partial_t^2 \overline{g}+c^2 \Delta \overline{g}+\nu \varepsilon \Delta \partial_t \overline{g}, \\
u(0)=u_0-\overline{g}(0), \hbox{  } \partial_t u(0)=u_1-\partial_t\overline{g}(0)
\end{array}
\right.
\end{equation}
in the sense of the variational formulation~(\ref{varformdampedwaveeqdirhom}).
Moreover, it holds the a priori estimate
\begin{equation}\label{EqAPLinNh}
\Vert u^*\Vert_{X(\Omega)}\leq C (\Vert f\Vert_{L^2([0,+\infty[;L^2(\Omega))}+\Vert  u_0\Vert_{H^2(\Omega)}+\Vert u_1\Vert_{H^1(\Omega)}+\Vert g\Vert_F).	
\end{equation}
\end{theorem}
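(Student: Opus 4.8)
The plan is to reduce the non-homogeneous problem~\eqref{cauchydampwavdirnonhom} to the homogeneous one solved in Theorem~\ref{thmeqivcaudampwav} by subtracting a space-time lifting $\overline{g}$ of the boundary datum, and then to read off the a priori estimate~\eqref{EqAPLinNh} from the linear maximal-regularity estimates~\eqref{aprioriglobdampedwave}--\eqref{aprioriglobdampedwavebis}.

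First I would construct the lifting: I claim that $F$ defined in~\eqref{regtrspacedir} is exactly the image of $X_1(\Omega)$ under the spatial trace $Tr_{\partial\Omega}$ applied for a.e.\ $t$, and that there is a bounded right inverse $g\mapsto\overline{g}\in X_1(\Omega)$ with $Tr_{\partial\Omega}\overline{g}=g$ and $\|\overline{g}\|_{X_1(\Omega)}\le C\|g\|_F$. The spatial part is furnished by the extension operator $E_{\partial\Omega}:B^{2,2}_{\beta_2}(\partial\Omega)\to H^2(\Omega)$ of Theorem~\ref{thmtradmissdom} (with $k=2$, $p=2$, $\beta_2=2-\tfrac{n-d}{2}$), which loses $\tfrac{n-d}{2}$ spatial derivatives; applied for each $t$ it sends $H^1([0,+\infty[;B^{2,2}_{\beta_2}(\partial\Omega))$ into $H^1([0,+\infty[;H^2(\Omega))$. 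The delicate point is that a pointwise-in-time extension does not by itself produce the component $H^2([0,+\infty[;L^2(\Omega))$ of $X_1(\Omega)$: this requires trading temporal for spatial smoothness, which is precisely why the second summand of $F$ carries the exponent $\tfrac74$. Indeed, the anisotropic symbol of $X_1(\Omega)$ is $\langle\tau\rangle^2(\langle\xi\rangle^4+\langle\tau\rangle^2)$, whose lateral boundary trace has weight $\langle\tau\rangle^{7/2}$, i.e.\ $H^{7/4}$ in time; I would therefore build $\overline{g}$ by combining $E_{\partial\Omega}$ with a vector-valued interpolation (equivalently a parabolic-type mollification in $t$), transferring the half-space trace computation to the admissible domain by means of the Sobolev extension operator $E_\Omega$. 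This trace theorem for $X_1(\Omega)$ is the main obstacle of the proof.

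Granting the lifting, I set $u^{*}:=\tilde{u}-\overline{g}$, so that $Tr_{\partial\Omega}u^{*}=g-g=0$ and $u^{*}$ solves~\eqref{syststaru} with right-hand side $\tilde{f}:=f-\partial_t^2\overline{g}+c^2\Delta\overline{g}+\nu\varepsilon\Delta\partial_t\overline{g}$. Each term of $\tilde{f}$ lies in $L^2([0,+\infty[;L^2(\Omega))$: the term $\partial_t^2\overline{g}$ from the $H^2([0,+\infty[;L^2(\Omega))$ component of $X_1(\Omega)$, and $\Delta\overline{g}$, $\Delta\partial_t\overline{g}$ because $\overline{g},\partial_t\overline{g}\in L^2([0,+\infty[;H^2(\Omega))$ from the $H^1([0,+\infty[;H^2(\Omega))$ component; hence $\|\tilde{f}\|_{L^2([0,+\infty[;L^2(\Omega))}\le\|f\|_{L^2([0,+\infty[;L^2(\Omega))}+C\|g\|_F$. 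For the initial data I would use that $H^1([0,+\infty[;H^2(\Omega))\hookrightarrow C([0,+\infty[;H^2(\Omega))$ gives $\overline{g}(0)\in H^2(\Omega)$, and that the time trace of $\partial_t\overline{g}\in H^1([0,+\infty[;L^2(\Omega))\cap L^2([0,+\infty[;H^2(\Omega))$ lies in $[L^2(\Omega),H^2(\Omega)]_{1/2}=H^1(\Omega)$, so $\partial_t\overline{g}(0)\in H^1(\Omega)$. The compatibility conditions $g(0)=Tr_{\partial\Omega}u_0$, $\partial_tg(0)=Tr_{\partial\Omega}u_1$, together with $Tr_{\partial\Omega}\overline{g}(0)=g(0)$ and $Tr_{\partial\Omega}\partial_t\overline{g}(0)=\partial_tg(0)$, then force $u_0-\overline{g}(0)\in\mathcal{D}(-\Delta)$ and $u_1-\partial_t\overline{g}(0)\in H^1_0(\Omega)$, which are exactly the hypotheses of assertion~(\textit{ii}) of Theorem~\ref{dampedwaveeqregint1}.

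Finally I would invoke Theorems~\ref{ThExLinW} and~\ref{dampedwaveeqregint1}: they yield a unique weak solution $u^{*}$ of~\eqref{syststaru} with $\partial_t^2u^{*},\Delta u^{*},\Delta\partial_tu^{*}\in L^2([0,+\infty[;L^2(\Omega))$, hence $u^{*}\in X(\Omega)$, satisfying~\eqref{aprioriglobdampedwave}--\eqref{aprioriglobdampedwavebis} for the data $\tilde{f}$, $u_0-\overline{g}(0)$, $u_1-\partial_t\overline{g}(0)$. Using $\|\overline{g}(0)\|_{H^2(\Omega)}+\|\partial_t\overline{g}(0)\|_{H^1(\Omega)}\le C\|g\|_F$ and the bound on $\tilde{f}$ above, these estimates collapse to~\eqref{EqAPLinNh}. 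Uniqueness of $\tilde{u}=u^{*}+\overline{g}$ follows from the uniqueness of $u^{*}$ (the chosen lifting $\overline{g}$ being fixed) by linearity, since the difference of two solutions solves~\eqref{syststaru} with zero source and zero initial data and therefore vanishes.
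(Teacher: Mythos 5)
Your proposal is correct and takes essentially the same route as the paper: the same decomposition $\tilde{u}=u^*+\overline{g}$ with a lifting $\overline{g}\in X_1(\Omega)$ built from the boundary extension operator $E_{\partial\Omega}$ of Theorem~\ref{thmtradmissdom}, the same use of the compatibility conditions to place the shifted data in $\mathcal{D}(-\Delta)\times H^1_0(\Omega)$, the same reduction to the homogeneous maximal-regularity result (Theorem~\ref{thmeqivcaudampwav}, i.e.\ Theorems~\ref{ThExLinW} and~\ref{dampedwaveeqregint1}), and the same assembly of estimate~\eqref{EqAPLinNh}. The anisotropic trace computation you sketch by hand to justify the exponent $\tfrac{7}{4}$ (the $\langle\tau\rangle^{7/2}$ weight for the lateral trace of the symbol $\langle\tau\rangle^{2}(\langle\xi\rangle^{4}+\langle\tau\rangle^{2})$) is precisely what the paper imports ready-made as Lemma~3.5 of Denk--Hieber--Pr\"uss on the half-space, so the identification of $F$ as the trace space of $X_1(\Omega)$ coincides in both arguments.
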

\begin{proof}
By Lemma 3.5 from Ref.~\cite{DenkHiebPruss} we have the following continuous embedding:
\begin{eqnarray*}
&&H^2([0,+\infty[;L^2(\mathbb{R}^{n-1}\times\R^+))\cap H^1([0,+\infty[;H^2(\mathbb{R}^{n-1}\times\R^+))\\
&&\hookrightarrow H^{\frac{7}{4}}([0,+\infty[;L^2(\mathbb{R}^{n-1}))\cap  H^{1}([0,+\infty[;H^{\frac{3}{2}}(\mathbb{R}^{n-1})).
\end{eqnarray*}
Thus, as $g\in F$, defined in (\ref{regtrspacedir}), the existence of $\overline{g}\in X_1(\Omega)$ 
 (for the definition of the space $X_1(\Omega)$ see (\ref{spacesolexttrace}))
with $Tr_{\partial\Omega}\overline{w}=g$ comes from the properties of the trace operator, which thanks to Theorem~\ref{thmtradmissdom} has a bounded linear right inverse, $i.e.$ the  extension operator $E_{\partial\Omega}$. %
Moreover, the boundness of $E_{\partial\Omega}$ implies
\begin{equation}\label{estgba}
\Vert \overline{g}\Vert_{X_1(\Omega)}\leq C \Vert g\Vert_{X(\Omega)}.
\end{equation}
Let us define $u^*$ as a solution of system (\ref{syststaru}).

The regularity of $\overline{g}$ ensures
$$-\partial_t^2 \overline{g} +c^2 \Delta \overline{g} +\nu \varepsilon \Delta \partial_t \overline{g}\in L^2([0,T];L^2(\Omega)), \;\; 
u_0-\overline{g}(0)\in H^2(\Omega),\;\;u_1-\partial_t \overline{g}(0)\in H^1(\Omega).$$
The compatibility conditions also allows to have
\begin{align*}
Tr_{\partial\Omega}( u_0-\overline{g}(0))=Tr_{\partial\Omega}( u_0)-g(0)=0,\\ 
Tr_{\partial\Omega}( u_1-\partial_t \overline{g}(0))=Tr_{\partial\Omega}( u_1)-\partial_tg(0)=0.
\end{align*}
Then we  apply Theorem \ref{thmeqivcaudampwav} to obtain the existence of a unique solution $u^*$ of system (\ref{syststaru}) with the desired regularity. 
The regularity of $u_0$, $u_1$ and $\overline{g}$ with the help of estimate~(\ref{EqAElinhom}) in Theorem~\ref{thmeqivcaudampwav} allows to deduce estimate~(\ref{EqAPLinNh}).
\end{proof}

\section{Well-posedness of the Westervelt equation with Dirichlet boundary conditions.}\label{SecWPW}
\subsection{Well-posedness with homogeneous boundary condition.}\label{secwpWesdirhom}
In this section $\Omega$ is an arbitrary bounded domain in $\mathbb{R}^3$ or a bounded NTA domain in $\mathbb{R}^2$ and
$-\Delta$ is the weak Laplacian defined in Definition~\ref{DefOpA}. 

To be able to give a sharp estimate of the smallness of the initial data and in the same time to estimate the bound of the corresponding solution of the Westervelt equation, we use the following theorem from Ref.~\cite{Sukhinin}:
\begin{theorem}[Sukhinin]\label{thSuh}
 Let $X$ be a Banach space, let $Y$ be a separable
topological vector space, let $L : X \rightarrow Y$ be a linear
continuous operator, let $U$ be the open unit ball in $X$, let ${\rm
P}_{LU}:LX \to [0,\infty [$ be the Minkowski functional of the set
$LU$, and let $\Phi :X \to LX$ be a mapping satisfying the condition
\begin{equation*}
 {\rm P}_{LU} \bigl(\Phi (x) -\Phi (\bar{x})\bigr) \leq
\Theta (r) \left\|x -\bar{x} \right\|\quad \text{for} \quad \left\|x
-x_0 \right\| \leqslant r,\quad \left\|\bar{x} -x_0 \right\| \leq r
\end{equation*} for some $x_0 \in X,$ where $\Theta :[0,\infty [ \to [0,\infty [$ is a monotone
non-decreasing function. Set $b(r) =\max \bigl(1 -\Theta (r),0
\bigr)$ for $r \geq 0$.

 Suppose that $$w =\int\limits_0^\infty b(r)\,dr \in ]0,\infty ], \quad r_* =\sup \{ r
\geq 0|\;b(r) >0 \},$$

$$w(r) =\int\limits_0^r b(t)dt \quad (r \geq 0) \quad\hbox{and} \quad g(x) =Lx
+\Phi(x) \quad \hbox{for} \quad x \in X.$$
Then for any $r \in
[0,r_*[$ and $ y \in g(x_0) +w(r)LU$, there exists an
 $ x \in x_0 +rU$ such that $g(x) =y$.
\end{theorem}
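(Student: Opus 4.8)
The plan is to read this as a quantitative nonlinear open-mapping statement of Lyusternik--Graves type and to prove it by a Newton/Picard iteration governed by two elementary properties of the Minkowski gauge $P_{LU}$. Since $LU$ is convex and balanced (being the image under the linear $L$ of the convex balanced ball $U$), the functional $P_{LU}$ is subadditive and positively homogeneous on $LX$. Moreover, for every $x\in X$ one has $Lx=L(x/t)\cdot t$ with $x/t\in U$ whenever $t>\|x\|$, so $P_{LU}(Lx)\le\|x\|$; conversely, if $z\in LX$ then $z\in t\,LU$ for every $t>P_{LU}(z)$, which produces a genuine preimage $v$ with $Lv=z$ and $\|v\|<P_{LU}(z)+\varepsilon$ for any $\varepsilon>0$. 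This last fact is the approximate bounded right inverse of $L$ on its range that replaces invertibility. Note that $g(X)\subset LX$ and $y\in g(x_0)+w(r)LU\subset LX$, so the whole construction stays inside $LX$ endowed with the gauge $P_{LU}$.

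First I would set the residual $\rho_n:=P_{LU}(y-g(x_n))$ and the radius $t_n:=\|x_n-x_0\|$, starting from the given $x_0$, for which $\rho_0\le w(r)$. At each step I solve $L\delta_{n+1}=\theta_n\,(y-g(x_n))$ exactly, with $\|\delta_{n+1}\|<\theta_n\rho_n+\varepsilon_{n+1}$, and put $x_{n+1}=x_n+\delta_{n+1}$, where $\theta_n\in(0,1]$ is a damping parameter. The identity $y-g(x_{n+1})=(1-\theta_n)(y-g(x_n))+\bigl(\Phi(x_n)-\Phi(x_{n+1})\bigr)$ keeps the residual in $LX$ and, combined with subadditivity of $P_{LU}$ and the Lipschitz hypothesis evaluated at the current radius $t_{n+1}$, yields $\rho_{n+1}\le(1-\theta_n)\rho_n+\Theta(t_{n+1})\|\delta_{n+1}\|\le\rho_n\bigl(1-\theta_n\,b(t_{n+1})\bigr)$, together with $t_{n+1}\le t_n+\theta_n\rho_n$. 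Because $r<r_*$ and $\Theta$ is non-decreasing, $b(t)=1-\Theta(t)\ge b(r)>0$ for all $t\le r$, so each step strictly decreases the residual as long as the radius has not exceeded $r$.

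The decisive point is to show that the accumulated displacement never leaves the ball of radius $r$ while $\rho_n\to0$, and here a uniform contraction constant is fatally too crude: with the single constant $\Theta(r)$ one would only bound the displacement by $w(r)/(1-\Theta(r))$, and since $b$ is non-increasing $w(r)=\int_0^r b\ge r\,b(r)$, so this bound exceeds $r$. The cure is to exploit the variable modulus by comparing the coupled recursion $\Delta t_{n+1}=\theta_n\rho_n$, $\Delta\rho_{n+1}\le-b(t_{n+1})\,\Delta t_{n+1}$ with the scalar relation $\rho(t)=\rho_0-w(t)$ coming from $\rho'=-b(t)$, whose displacement reaches $r$ exactly as $\rho$ reaches $0$ (using $\rho_0\le w(r)$). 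Choosing the damping $\theta_n$ small enough that the step radii shrink makes the discrete recursion track this scalar majorant, so that $t_n\le r$ for all $n$ and $\rho_n\to0$, the errors $\varepsilon_n$ being taken summable and small so as not to spoil the bound. I expect this sharp, discretization-robust displacement estimate---matching the integral $w(r)=\int_0^r(1-\Theta)$ to the admissible radius $r$---to be the main obstacle. Completeness of $X$ then gives $x=\lim x_n$ with $\|x-x_0\|\le r$, and continuity of $L$ and of $\Phi$ (the latter through $P_{LU}(\Phi(x_n)-\Phi(x))\to0$, where the topological vector space structure and separability of $Y$ are used to pass to the limit in $Y$) yield $g(x)=y$; a final limiting argument over radii $r'<r$ places $x$ in the open ball $x_0+rU$.
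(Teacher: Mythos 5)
The paper never proves Theorem~\ref{thSuh}: it is imported from Sukhinin's work~\cite{Sukhinin}, and the only in-paper commentary is Remark~\ref{remch22.1.}, which observes that when $L$ is injective (or $\operatorname{Ker}L$ is topologically complemented with $L(E\cap U)=LU$) the assertion reduces to the contraction mapping principle. Your blind proposal is therefore necessarily a different route, and judged on its own it is essentially a correct constructive proof of the general statement, in the spirit of small-step (Pt\'ak-type) induction. Your two structural observations are exactly right: (i) since $LU$ is convex and balanced, $P_{LU}$ is subadditive with $P_{LU}(Lx)\le\|x\|$, and any $z\in LX$ with $P_{LU}(z)<t$ admits an \emph{exact} preimage of norm $<t$ (because $\{t>0:\,z\in tLU\}$ is an upper interval for a convex set containing $0$) --- this approximate right inverse replaces invertibility and covers the non-injective case the contraction-mapping remark does not; and (ii) the undamped iteration with the uniform constant $\Theta(r)$ only bounds the displacement by $w(r)/b(r)\ge r$, so damping is genuinely needed, the correct majorant being the scalar flow $\rho'=-b(t)$ under which residual $w(r)=\int_0^r b$ is consumed over displacement exactly $r$.

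Two points must be tightened for a complete proof. First, the discrete-to-continuous comparison: with $\Delta s_n=\|\delta_{n+1}\|\le\theta_n\rho_n$ and $b$ non-increasing you get $\rho_{n+1}-\rho_n\le -b(s_{n+1})\Delta s_n$, but $\sum_n b(s_{n+1})\Delta s_n$ is a \emph{lower} Riemann sum of $\int_0^{s_N}b$, so each step loses a little relative to the ODE; you must exploit the strict slack $\rho_0\le w(r)\|u\|<w(r)$ (available precisely because $U$ is the \emph{open} ball) together with summable $\varepsilon_n$ and a mesh restriction on $\theta_n\rho_n$, the total discretization loss being bounded by the mesh times the total variation of $b\le 1$; this yields the invariant $s_n\le w^{-1}(\rho_0+\eta)<r$ (legitimate since $b>0$, hence $w$ strictly increasing, on $[0,r_*[$), and then $\sum\theta_n=\infty$ along the admissible dampings forces $\rho_n\to 0$. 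Second, the passage to the limit: from $\rho_n\to0$, $x_n\to x$ and the Lipschitz bound you only obtain $P_{LU}(y-g(x))=0$, and you must add the separation argument: $P_{LU}(z)=0$ produces preimages $x_\varepsilon$ with $Lx_\varepsilon=z$ and $\|x_\varepsilon\|\to0$, whence $z=\lim Lx_\varepsilon=0$ by continuity of $L$ in the Hausdorff space $Y$ --- this is where the topology of $Y$ actually enters (separability, used in Sukhinin's general selection argument, is never needed in yours, which is harmless). With these repairs your scheme closes, gives $\|x-x_0\|<r$ directly (no final limiting over radii $r'<r$ is required), and proves existence; uniqueness, as in Remark~\ref{remch22.1.}, holds only under injectivity of $L$ and is not part of the statement.
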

\begin{remark} \label{remch22.1.} If either $L$ is injective or $KerL$ has a topological
complement $E$ in $X$ such that $L(E \cap U) =LU$, then the
assertion of Theorem~\ref{thSuh} follows from the contraction
mapping principle~\cite{Sukhinin}. In particular, if $L$ is injective,
then the solution is unique.
\end{remark}
We use Theorem~\ref{thSuh} to prove the following global well-posedness result.
\begin{theorem}\label{ThWPWestGlob}
We take $\Omega$ as an arbitrary bounded domain in $\mathbb{R}^3$ or a bounded NTA domain in $\mathbb{R}^2$.
 Let $\nu>0$, and $\R^+=[0,+\infty[$. Let $X(\Omega)$ be the Hilbert space defined in~(\ref{Hspace}),
$$u_0\in\mathcal{D}(-\Delta)  ,\quad u_1\in H^1_0(\Omega)\quad \hbox{and} \quad f\in L^2(\R^+;L^2(\Omega)) $$
and in addition $C_1=O(1)$ is the minimal constant such that the weak
  solution, in the sense of~(\ref{varformdampedwaveeqdirhom}), $u^*\in X(\Omega)$ of the corresponding non homogeneous linear boundary-valued problem (\ref{dampedwaveeqdirhom}) satisfies
  $$\|u^*\|_{X(\Omega)}\le \frac{C_1}{\nu \eps}(\Vert f\Vert_{L^2(\R^+;L^2(\Omega))} +\Vert u_0\Vert_{\mathcal{D}(-\Delta)}+\Vert u_1\Vert_{H^1_0(\Omega)}).$$
  Then there exists $r_{*}>0$ with $r_*=O(1)$ such that for all  $r\in[0,r_{*}[$
  and all 
  data  satisfying  
$$
 \Vert f\Vert_{L^2(\R^+;L^2(\Omega))} +\Vert u_0\Vert_{\mathcal{D}(-\Delta)}+\Vert u_1\Vert_{H^1_0(\Omega)}\le \frac{\nu \eps}{C_1}r,
$$
there exists the unique weak solution $u\in X(\Omega)$ of the following  problem for the Westervelt equation
\begin{equation}\label{CauchypbWesdirhom}
\left\lbrace
\begin{array}{l}
\partial^2_t u-c^2\Delta u-\nu \varepsilon \partial_t \Delta u=\alpha\varepsilon u \partial^2_t u+\alpha\varepsilon (\partial_t u)^2+f\quad on\quad [0,+\infty[\times\Omega,\\
u=0\quad on \quad [0,+\infty[\times\partial\Omega,\\
u(0)=u_0, \hbox{ }\partial_t u(0)=u_1.
\end{array}\right.
\end{equation} 
Here $u$ is a weak in the sense that $u=u^*+v$ where $u^*\in X(\Omega)$ is the weak solution of the variational formulation~(\ref{varformdampedwaveeqdirhom}) and $v\in X(\Omega)$ is the solution of an homogeneous non linear initial-boundary valued problem depending on $u^*$ and determined by Theorem~\ref{thSuh}. More precisely, $u\in X(\Omega)$ is the weak solution of the following variational formulation for all $\phi\in L^2([0,+\infty[;H^1_0(\Omega))$
\begin{align}
\int_0^{+\infty}&(\partial^2_t u,\phi)_{L^2(\Omega)}+c^2(\nabla u,\nabla \phi)_{L^2(\Omega)}+\nu \varepsilon (\nabla \partial_t u,\nabla \phi)_{L^2(\Omega)}ds\nonumber\\
&=\int_0^{+\infty} \alpha \varepsilon (u \partial^2_tu+ (\partial_t u)^2+f,\phi)_{L^2(\Omega)}ds\label{weakformWes}
\end{align}
with $u(0)=u_0$ and $\partial_t u(0)=u_1$.
Moreover 
$$\Vert u\Vert_{X(\Omega)}\leq 2 r.$$
\end{theorem}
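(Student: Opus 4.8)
The plan is to recast problem~(\ref{CauchypbWesdirhom}) as a single operator equation $g(v)=Lv+\Phi(v)=0$ to which Sukhinin's theorem (Theorem~\ref{thSuh}) applies, with base point $x_0=0$ and target $y=0$. First I would work in $X=X_0(\Omega)$ (the space~(\ref{HspaceX0}) of elements of $X(\Omega)$ vanishing together with their first time derivative at $t=0$) and $Y=L^2(\R^+;L^2(\Omega))$, let $u^*\in X(\Omega)$ be the linear solution carrying the data $(f,u_0,u_1)$, and write $u=u^*+v$ with $v\in X_0(\Omega)$, since $u^*$ and $u$ share the initial data. Writing $N(w):=\alpha\eps\,(w\,\del_t^2 w+(\del_t w)^2)$ for the Westervelt nonlinearity and $Lv:=\del_t^2 v+c^2(-\Delta)v+\nu\eps\,\del_t(-\Delta)v$, the residual equation for $v$ is $Lv=N(u^*+v)$. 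By Theorem~\ref{thmeqivcaudampwav} the operator $L:X_0(\Omega)\to Y$ is a continuous linear isomorphism with $\Vert L^{-1}z\Vert_{X(\Omega)}\le\frac{C_1}{\nu\eps}\Vert z\Vert_Y$, so the Minkowski functional of $LU$ is simply $\mathrm{P}_{LU}(z)=\Vert L^{-1}z\Vert_{X(\Omega)}$. I then set $\Phi(v):=-N(u^*+v)$ and $g:=L+\Phi$, so that $g(v)=0$ is exactly the nonlinear problem for $v$.

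Next I would verify that $\Phi$ maps $X_0(\Omega)$ into $Y=LX$ and establish the crucial Lipschitz-type bound. Both rest on Proposition~\ref{propestadmdom}: for homogeneous Dirichlet data the trace terms vanish, giving the elliptic bound $\Vert w(t)\Vert_{L^\infty(\Omega)}\le C\Vert\Delta w(t)\Vert_{L^2(\Omega)}$ — this is precisely where the restriction to arbitrary domains in $\R^3$ (Corollary~\ref{PoissonR3}) and to planar NTA domains (Corollary~\ref{linfPoissonR2}) enters. Combined with $X(\Omega)\hookrightarrow L^\infty(\R^+;\mathcal{D}(-\Delta))$ this yields $w\in L^\infty(\R^+;L^\infty(\Omega))$ and $\del_t w\in L^2(\R^+;L^\infty(\Omega))\cap L^\infty(\R^+;H^1_0(\Omega))$, with norms controlled by $\Vert w\Vert_{X(\Omega)}$. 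Using, with $w=u^*+v$ and $\bar w=u^*+\bar v$, the algebraic identity
$$N(w)-N(\bar w)=\alpha\eps\big[\,w\,\del_t^2(v-\bar v)+(v-\bar v)\,\del_t^2\bar w+(\del_t w+\del_t \bar w)\,\del_t(v-\bar v)\,\big],$$
and estimating each product by H\"older in space (pairing $w$ and $v-\bar v$ in $L^\infty(\R^+;L^\infty)$ against the $L^2(\R^+;L^2)$ factors $\del_t^2(v-\bar v)$ and $\del_t^2\bar w$, and pairing $\del_t w+\del_t\bar w$ in $L^2(\R^+;L^\infty)$ against $\del_t(v-\bar v)$ in $L^\infty(\R^+;L^2)$), I obtain
$$\Vert N(w)-N(\bar w)\Vert_Y\le C\alpha\eps\,(\Vert u^*\Vert_{X(\Omega)}+\Vert v\Vert_{X(\Omega)}+\Vert\bar v\Vert_{X(\Omega)})\,\Vert v-\bar v\Vert_{X(\Omega)}.$$
Composing with $\mathrm{P}_{LU}=\Vert L^{-1}\cdot\Vert_{X(\Omega)}\le\frac{C_1}{\nu\eps}\Vert\cdot\Vert_Y$, the factor $\eps$ cancels; and for $\Vert v\Vert,\Vert\bar v\Vert\le r$ together with the data bound, which forces $\Vert u^*\Vert_{X(\Omega)}\le r$, this gives $\mathrm{P}_{LU}(\Phi(v)-\Phi(\bar v))\le\Theta(r)\Vert v-\bar v\Vert_{X(\Omega)}$ with the monotone non-decreasing choice $\Theta(r)=\frac{C\alpha C_1}{\nu}\,r$.

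Finally I would run Sukhinin's machinery. With $\Theta$ linear, $b(r)=\max(1-\Theta(r),0)$ yields $r_*=\frac{\nu}{C\alpha C_1}=O(1)$ (independent of $\eps$, as claimed) and $w(r)=r-\tfrac12\Theta(r)r$ for $r\le r_*$. To apply Theorem~\ref{thSuh} at $x_0=0$, $y=0$ I must check $0\in g(0)+w(r)LU$, i.e. $\mathrm{P}_{LU}(g(0))=\mathrm{P}_{LU}(N(u^*))\le w(r)$; the same product estimates give $\Vert N(u^*)\Vert_Y\le C\alpha\eps\Vert u^*\Vert_{X(\Omega)}^2$, whence $\mathrm{P}_{LU}(N(u^*))\le\frac{C\alpha C_1}{\nu}\Vert u^*\Vert_{X(\Omega)}^2\le\frac{C\alpha C_1}{\nu}r^2$, which is $\le w(r)$ once $r$ is small enough (shrinking $r_*$ if necessary). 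Sukhinin's theorem then produces $v\in rU$, i.e. $\Vert v\Vert_{X(\Omega)}<r$, with $g(v)=0$, and since $L$ is injective, Remark~\ref{remch22.1.} gives uniqueness. The function $u=u^*+v$ is then the desired weak solution of~(\ref{weakformWes}), with $\Vert u\Vert_{X(\Omega)}\le\Vert u^*\Vert_{X(\Omega)}+\Vert v\Vert_{X(\Omega)}\le 2r$. The main obstacle is the nonlinear estimate of the second paragraph: showing that $N(u^*+v)$ lands in $L^2(\R^+;L^2(\Omega))$ and is Lipschitz in $v$ with a constant carrying a compensating factor $\eps$. This is feasible only because the homogeneous $L^\infty$-bound of Proposition~\ref{propestadmdom} holds on arbitrary $\R^3$ domains and on planar NTA domains, which is exactly the geometric hypothesis of the theorem.
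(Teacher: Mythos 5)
Your proposal is correct and takes essentially the same route as the paper's proof: the decomposition $u=u^*+v$ with $v\in X_0(\Omega)$, the bi-continuous isomorphism $L$ from Theorem~\ref{thmeqivcaudampwav}, the bilinear estimates $\Vert a\,\del_t^2 b\Vert_Y,\ \Vert \del_t a\,\del_t b\Vert_Y\le B\Vert a\Vert_{X(\Omega)}\Vert b\Vert_{X(\Omega)}$ via the $L^{\infty}$ bounds of Corollaries~\ref{PoissonR3} and~\ref{linfPoissonR2}, a linear $\Theta(r)$ with the $\eps$-cancellation giving $r_*=O(1)$, the verification that $0\in g(0)+w(r)LU$, and uniqueness from the injectivity of $L$ via Remark~\ref{remch22.1.}. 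The only discrepancies are cosmetic: the $L^{\infty}$ bound on an arbitrary domain of $\R^3$ should be credited to Corollary~\ref{PoissonR3} rather than to Proposition~\ref{propestadmdom} (which presumes an admissible domain, while your parenthetical citations are the correct ones), and the paper avoids your shrinking of $r_*$ by bounding the auxiliary element $z$ with $\Vert z\Vert_{X(\Omega)}<\frac{1}{2}$ for all $r<r_*$, a harmless difference since both versions keep $r_*=O(1)$.
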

\begin{proof}
For $u_0\in \mathcal{D}(-\Delta)$ and $u_1\in H^{1}_0(\Omega)$ and $f\in L^2(\R^+;L^2(\Omega))$  let us denote, by Theorem \ref{thmeqivcaudampwav}, $u^*\in X(\Omega)$ is the unique weak solution of the linear problem
$$
\begin{cases}
\del_t^2 u^*-c^2\Delta u^*-\nu \varepsilon \Delta \del_t u^*=f \quad on\quad [0,+\infty[\times\Omega,\\
u=0\quad on \quad [0,+\infty[\times\partial\Omega,\\
u^*(0)=u_0\in \mathcal{D}(-\Delta),\;\;\del_t u^*(0)=u_1\in H^{1}_0(\Omega),
\end{cases}
$$
in the sense of the variational formulation~(\ref{varformdampedwaveeqdirhom}).
In addition, according to Theorem~\ref{thmeqivcaudampwav}, we consider $X(\Omega)$ and $X_0(\Omega)$, defined in Definition~\ref{DefSpaceX} and in~(\ref{HspaceX0}) respectively, and introduce the Banach space 
 $Y=L^2(\R^+;L^2(\Omega))$. Then by Theorem~\ref{thmeqivcaudampwav}, the linear operator 
$$L:X_0(\Omega)\rightarrow Y,\quad  u\in X_0(\Omega)\mapsto\;L(u):=\del^2_tu+c^2(-\Delta) u+\nu \varepsilon \del_t (-\Delta) u\in Y$$
 is a bi-continuous isomorphism. 
 
 Let us now notice that if $v$ is the unique solution of the non-linear boundary valued problem 
 \begin{equation}\label{SystwesnV}
\begin{cases}
\del_t^2 v+c^2(-\Delta) v+\nu\varepsilon (-\Delta) \del_t v-\alpha \varepsilon (v+u^*)\del_t^2(v+u^*)-\alpha \varepsilon [\del_t(v+u^*)]^2=0, \\
v(0)=0,\quad \del_t v(0)=0,
\end{cases}
\end{equation}
 then $u=v+u^*$ is the unique weak solution of the boundary valued problem for the Westervelt equation~(\ref{CauchypbWesdirhom}).
 Let us prove the existence of a such $v$, using Theorem~\ref{thSuh}. Note that $v$ will be a weak solution with $v\in X_0(\Omega)$, in the sense that $\forall \phi\in L^2([0,+\infty[;H^1_0(\Omega))$
 \begin{align*}
\int_0^{+\infty}&(\partial^2_t v,\phi)_{L^2(\Omega)}+c^2(\nabla v,\nabla \phi)_{L^2(\Omega)}+\nu \varepsilon (\nabla \partial_t v,\nabla \phi)_{L^2(\Omega)}ds\nonumber\\
&=\int_0^{+\infty} \alpha \varepsilon ((v+u^*)\del_t^2 (v+u^*)+[\del_t(v+u^*)]^2,\phi)_{L^2(\Omega)}ds
 \end{align*}
 with $v(0)=0$ and $\partial_t v(0)=0$.
 
We suppose that $\Vert u^*\Vert_{X(\Omega)}\leq r$
and define for $v\in X_0(\Omega)$
$$\Phi(v):=\alpha \varepsilon (v+u^*)\del_t^2 (v+u^*)+\alpha \varepsilon [\del_t(v+u^*)]^2.$$

For $w$ and $z$ in $X_0(\Omega)$ such that 
$\Vert w\Vert_{X(\Omega)}\leq r$ and $\Vert z\Vert_{X(\Omega)}\leq r$,
 we estimate
$$
 \Vert \Phi(w)-\Phi(z)\Vert_Y,
$$
by applying the triangular inequality.
The key point is that it appears terms of the form $\Vert a \del_t^2 b\Vert_Y$ and $\Vert \del_ta \del_t b\Vert_Y$
with $a$ and $b$ in $X(\Omega)$ and we have the estimate
\begin{align*}
\Vert a \del_t^2 b\Vert_Y\leq &\Vert a\Vert_{L^{\infty}(\mathbb{R}^+\times\Omega)}\Vert \del_t^2 b\Vert_Y,\\
\intertext{by Corollary~\ref{PoissonR3} in $\mathbb{R}^3$ and Corollary~\ref{linfPoissonR2} for NTA domains in $\mathbb{R}^2$ we have}
\Vert a \del_t^2 b\Vert_Y\leq & C \Vert a\Vert_{L^{\infty}(\mathbb{R}^+;\mathcal{D}(-\Delta))}\Vert b\Vert_{X(\Omega)}\\
\intertext{and the Sobolev embedding implies}
\Vert a \del_t^2 b\Vert_Y\leq & C \Vert a\Vert_{H^1(\mathbb{R}^+;\mathcal{D}(-\Delta))}\Vert b\Vert_{X(\Omega)}
\\
\leq & B_1 \Vert a\Vert_{X(\Omega)} \Vert b\Vert_{X(\Omega)},
\end{align*}
with a constant $B_1>0$ depending only on $\Omega$.
Moreover we have 
\begin{align*}
\Vert \del_t a \del_t b\Vert_Y\leq & \sqrt{\int_{0}^{+\infty} \Vert \del_t a\Vert_{L^{\infty}(\Omega)} \Vert \del_t b\Vert_{L^2(\Omega)}ds}.
\intertext{Therefore, by Corollary~\ref{PoissonR3} in $\mathbb{R}^3$ and Corollary~\ref{linfPoissonR2} for NTA domains in $\mathbb{R}^2$ we have}
\Vert \del_t a \del_t b\Vert_Y\leq & C \sqrt{\int_{0}^{+\infty} \Vert \del_t a\Vert_{\mathcal{D}(-\Delta)} \Vert \del_t b\Vert_{L^2(\Omega)}ds}\\
\leq & C \Vert \del_t a\Vert_{L^{2}(\R^+; \mathcal{D}(-\Delta))}\Vert \del_t b\Vert_{L^{\infty}(\R^+; L^2(\Omega))}\\
\leq & C \Vert a\Vert_{X(\Omega)} \Vert \del_t b\Vert_{H^1(\R^+; L^2(\Omega))}
\end{align*}
also using Sobolev's embeddings. As a result we have
$$\Vert \del_t a \del_t b\Vert_Y\leq  B_2 \Vert a\Vert_{X(\Omega)} \Vert b\Vert_{X(\Omega)},$$
with a constant $B_2>0$ depending only on $\Omega$.
Taking $a$ and $b$ equal to $u^*$, $w$, $z$ or $w-z$, as $\Vert u^*\Vert_{X(\Omega)}\leq r$, $\Vert w\Vert_{X(\Omega)}\leq r$ and $\Vert z\Vert_{X(\Omega)}\leq r$, we obtain
\begin{align*}
\Vert \Phi(w)-\Phi(z)\Vert_Y\leq 8
\alpha B \varepsilon r \Vert w-z\Vert_{X(\Omega)}
\end{align*}
with $B=\max(B_1,B_2)>0$, depending only on $\Omega$.

By the fact that $L$ is a bi-continuous isomorphism, there exists a minimal constant $C_\eps=O\left(\frac{1}{\eps \nu} \right)>0$ (coming from the inequality $C_0 \eps \nu\|u\|_{X(\Omega)}^2\le \|f\|_Y\|u\|_{X(\Omega)}$ for $u$, a solution of the linear problem~(\ref{dampedwaveeqdirhom}) with homogeneous initial data [for a maximal possible constant $C_0=O(1)>0$])
such that
$$\forall u\in X_0(\Omega) \quad \Vert u\Vert_{X(\Omega)}\leq C_\eps \Vert Lu\Vert_Y.$$
Hence, for all $g\in Y$
$$P_{LU_{X_0(\Omega)}}(g)\leq C_\eps P_{U_Y}(g)=C_\eps\Vert g\Vert_Y.$$
Then we find for $w$ and $z$ in $X_0(\Omega)$, such that $\|w\|_{X(\Omega)}\le r$, $\|z\|_{X(\Omega)}\le r$, and also with $\|u^*\|_{X(\Omega)}\le r$, that
$$P_{LU_{X_0(\Omega)}}(\Phi(w)-\Phi(z))\leq \Theta(r) \Vert w-z\Vert_{X(\Omega)},$$
where $\Theta(r):=8 B C_\eps \alpha\varepsilon r$.
Thus we apply Theorem~\ref{thSuh} for 
$g(x)=L(x)-\Phi(x)$ and $x_0=0$. Therefore, knowing that $C_\eps=\frac{C_0}{\eps \nu}$, we have, that for all  $r\in[0,r_{*}[$ with
\begin{equation}\label{Eqret}
 r_{*}=\frac{\nu}{8 B C_0 \alpha}=O(1),
\end{equation}
  for all $y\in \Phi(0)+w(r) L U_{X_0(\Omega)}\subset Y$
with $$w(r)= r-4 B \frac{C_0}{\nu}  \alpha r^2,$$
there exists a unique $v\in 0+r U_{X_0(\Omega)}$ such that $L(v)-\Phi(v)=y$.
But, if we want that $v$ be the solution of the non-linear Cauchy problem~(\ref{SystwesnV}), then we need to impose $y=0$, and thus to ensure that $0\in \Phi(0)+w(r) L U_{X_0(\Omega)}$.
Since $-\frac{1}{w(r)}\Phi(0)$ is an element of $Y$ and $LX_0(\Omega)=Y$, there exists a unique $z\in X_0(\Omega)$ such that
\begin{equation}\label{Eqz}
 L z=-\frac{1}{w(r)}\Phi(0).
\end{equation}
Let us show that $\|z\|_{X(\Omega)}\le 1$, what will implies that $0\in \Phi(0)+w(r) L U_{X_0(\Omega)}$.
Noticing that
\begin{align*}
\Vert \Phi(0)\Vert_Y & \leq \alpha \varepsilon \Vert \del_t u^* \del_t^2 u^*\Vert_Y +\alpha \varepsilon \Vert \del_t u^* \del_t u^*\Vert_Y\\
& \leq 2 \alpha \varepsilon B \Vert u^*\Vert_{X(\Omega)}^2 \\
& \leq 2 \alpha \varepsilon B r^2
\end{align*}
and using~(\ref{Eqz}), we find
\begin{align*}
  \Vert z\Vert_{X(\Omega)} \leq C_\eps\Vert L z\Vert_Y=C_\eps\frac{\Vert \Phi(0)\Vert_Y}{w(r)}\leq \frac{C_\eps 2 B \alpha \varepsilon r}{(1-4 C_\eps B \alpha \varepsilon r)}<\frac{1}{2},
\end{align*}
as soon as $r<r^*$.

Consequently, $z\in U_{X_0(\Omega)}$ and $\Phi(0)+w(r) Lz=0$.

Then we conclude that  for all  $r\in[0,r_{*}[$, if $\|u^*\|_{X(\Omega)}\le r$, there exists a unique $v\in r U_{X_0(\Omega)}$ such that $L(v)-\Phi(v)=0$, $i.e.$  the solution of the non-linear  problem~(\ref{SystwesnV}).
Thanks to the maximal regularity and a priori estimate following from Theorem~\ref{thmeqivcaudampwav},
there exists a constant $C_1=O(\eps^0)>0$, such that
$$\|u^*\|_{X(\Omega)}\le \frac{C_1}{\nu \varepsilon}(\Vert f\Vert_Y+\Vert u_0\Vert_{\mathcal{D}(-\Delta)}+\Vert u_1\Vert_{H^1_0(\Omega)}).$$

Thus, for all  $r\in[0,r_{*}[$ and $\Vert f\Vert_Y+\Vert u_0\Vert_{\mathcal{D}(-\Delta)}+\Vert u_1\Vert_{H^1_0(\Omega)} \leq \frac{\nu \eps}{C_1}r$, the function $u=u^*+v\in X(\Omega)$ is the unique solution of the abstract Cauchy problem for the Westervelt equation and $\Vert u\Vert_{X(\Omega)}\leq 2 r$.
\end{proof}
\subsection{Well posedness with non homogeneous Dirichlet boundary conditions on admissible domains.}\label{secwpWesdirinhom}
As we consider the non homogeneous Dirichlet boundary conditions,   in this section $\Omega$ is an admissible domain in $\mathbb{R}^n$ ($n=2$ or $3$), on which the trace and extension operators are well defined. 

More precisely we consider the following non homogeneous Dirichlet boundary problem for the the Westervelt equation
\begin{equation}\label{systWestnonhom}
\left\lbrace
\begin{array}{l}
\partial_t^2 u-c^2 \Delta u -\nu \varepsilon \Delta \partial_t u=\alpha\varepsilon u\partial_t^2 u+\alpha \varepsilon (\partial_t u)^2+f \hbox{ in }[0,+\infty[\times\Omega,\\
u=g\hbox{ on }\partial\Omega,\\
(u(0),\partial_t u(0))=(u_0,u_1)\hbox{ on }\Omega.
\end{array}
\right.
\end{equation}
We are looking for solution of the form $u=v+\tilde{u}$, where $\tilde{u}$ solves the strongly damped system~(\ref{cauchydampwavdirnonhom}) in a weak sense according to Theorem~\ref{strondampwavedirnonhom}. It lead us to $v$, the solution of the following abstract Cauchy system 
\begin{equation}\label{systWestbishom}
\begin{array}{l}
\partial_t^2 v+c^2 (-\Delta v) +\nu \varepsilon (-\Delta) \partial_t v=\alpha\varepsilon (v+\tilde{u})\partial_t^2 (v+\tilde{u})+\alpha \varepsilon [\partial_t (v+\tilde{u})]^2,\\
(v(0),\partial_t v(0))=(0,0), 
\end{array}
\end{equation}
the well posedness of which is determined in the following theorem using Theorem~\ref{thSuh}.
\begin{theorem}\label{Globwelposvdir}
Let $X(\Omega)$ and $X_1(\Omega)$ be defined by~(\ref{Hspace}) and~(\ref{spacesolexttrace}) respectively.
For $u^*\in X(\Omega)$ and $\overline{g}\in X_1(\Omega)$, there exits $r^*>0$ with $r^*=O(1)$, such that for $r\in [0,r^*[$ and all data satisfying 
$$\Vert u^*\Vert_{X(\Omega)}\leq \frac{r}{2}\; and\; \Vert \overline{g}\Vert_{X_1(\Omega)}\leq \frac{r}{2},$$ there exists the unique weak solution $v\in X(\Omega)$ of the nonlinear problem~(\ref{systWestbishom}) with $\tilde{u}=u^*+\overline{g}$ and $\Vert v\Vert_{X(\Omega)}\leq r$.

Note that $v$ will be a weak solution in the sense, where $\forall \phi\in L^2([0,+\infty[;H^1_0(\Omega))$
 \begin{align*}
\int_0^{+\infty}&(\partial^2_t v,\phi)_{L^2(\Omega)}+c^2(\nabla v,\nabla \phi)_{L^2(\Omega)}+\nu \varepsilon (\nabla \partial_t v,\nabla \phi)_{L^2(\Omega)}ds\nonumber\\
&=\int_0^{+\infty} \alpha \varepsilon ((v+\tilde{u})(v+\tilde{u})_{tt}+[(v+\tilde{u})_t]^2,\phi)_{L^2(\Omega)}ds
 \end{align*}
 with $v(0)=0$ and $\partial_t v(0)=0$.

\end{theorem}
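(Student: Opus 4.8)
The plan is to reproduce, almost verbatim, the Sukhinin fixed-point scheme of the proof of Theorem~\ref{ThWPWestGlob}, the only genuine change being that the linear datum is now split as $\tilde u=u^*+\overline g$ with $u^*\in X(\Omega)$ and $\overline g\in X_1(\Omega)$. First I would recall from Theorem~\ref{thmeqivcaudampwav} that, with $Y=L^2(\R^+;L^2(\Omega))$, the operator
$$L:X_0(\Omega)\rightarrow Y,\qquad L(v):=\del_t^2 v+c^2(-\Delta)v+\nu\varepsilon\del_t(-\Delta)v,$$
is a bicontinuous isomorphism, so that $P_{LU_{X_0(\Omega)}}(g)\le C_\varepsilon\|g\|_Y$ with $C_\varepsilon=C_0/(\varepsilon\nu)=O(1/(\varepsilon\nu))$. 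I would then set
$$\Phi(v):=\alpha\varepsilon(v+\tilde u)\del_t^2(v+\tilde u)+\alpha\varepsilon[\del_t(v+\tilde u)]^2,$$
so that a weak solution $v\in X_0(\Omega)$ of~(\ref{systWestbishom}) is exactly a solution of the abstract equation $L(v)-\Phi(v)=0$.

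The core of the matter is again the pair of bilinear estimates $\|a\,\del_t^2 b\|_Y\le B\|a\|_\bullet\|b\|_\bullet$ and $\|\del_t a\,\del_t b\|_Y\le B\|a\|_\bullet\|b\|_\bullet$, where each factor $a,b$ ranges over $u^*,\overline g,w,z,w-z$. For the factors lying in $X(\Omega)$ I would reuse the estimates of Theorem~\ref{ThWPWestGlob} unchanged, controlling $\|a\|_{L^\infty(\R^+\times\Omega)}$ through $\mathcal D(-\Delta)\hookrightarrow L^\infty(\Omega)$ (Corollary~\ref{PoissonR3} in $\R^3$, Corollary~\ref{linfPoissonR2} for NTA domains in $\R^2$) together with the time embedding $H^1(\R^+;\cdot)\hookrightarrow L^\infty(\R^+;\cdot)$. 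The new contribution is the factor $\overline g\in X_1(\Omega)$: here I would instead invoke the Sobolev embedding $H^2(\Omega)\hookrightarrow L^\infty(\Omega)$ of Theorem~\ref{thmsobolembadm} (valid since $kp=4>n$ for $n=2,3$) to get $\overline g\in L^\infty(\R^+;H^2(\Omega))\hookrightarrow L^\infty(\R^+\times\Omega)$ with $\|\overline g\|_{L^\infty(\R^+\times\Omega)}\le C\|\overline g\|_{X_1(\Omega)}$, while $\del_t^2\overline g\in Y$, $\del_t\overline g\in L^\infty(\R^+;L^2(\Omega))$ and $\del_t\overline g\in L^2(\R^+;L^\infty(\Omega))$ all follow directly from the definition of $X_1(\Omega)$. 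Every mixed product is thus bounded by $B$ times the product of the appropriate $X(\Omega)$- or $X_1(\Omega)$-norms of its factors.

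With these estimates in hand, under the hypotheses $\|u^*\|_{X(\Omega)}\le r/2$ and $\|\overline g\|_{X_1(\Omega)}\le r/2$ one has $\|\tilde u\|\le r$, so $\tilde u$ plays exactly the role of the $r$-bounded datum in the homogeneous case. Expanding $\Phi(w)-\Phi(z)$ by the triangle inequality and collecting the bilinear terms I would obtain
$$P_{LU_{X_0(\Omega)}}(\Phi(w)-\Phi(z))\le\Theta(r)\|w-z\|_{X(\Omega)},\qquad\Theta(r):=8BC_\varepsilon\alpha\varepsilon r,$$
for $\|w\|_{X(\Omega)},\|z\|_{X(\Omega)}\le r$, with $B=\max(B_1,B_2)$. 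Setting $b(r)=\max(1-\Theta(r),0)$ gives $r^*=\nu/(8BC_0\alpha)=O(1)$. As in Theorem~\ref{ThWPWestGlob} I would then check $0\in\Phi(0)+w(r)LU_{X_0(\Omega)}$ by solving $Lz=-w(r)^{-1}\Phi(0)$ and verifying $\|z\|_{X(\Omega)}<1$ using $\|\Phi(0)\|_Y\le 2\alpha\varepsilon B r^2$. Theorem~\ref{thSuh} applied with $x_0=0$ then produces a $v\in rU_{X_0(\Omega)}$ with $L(v)-\Phi(v)=0$, i.e. a weak solution of~(\ref{systWestbishom}) with $\|v\|_{X(\Omega)}\le r$; uniqueness follows from the injectivity of $L$ through Remark~\ref{remch22.1.}.

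The step I expect to be the genuine obstacle is precisely the control of the $\overline g$-factors on an admissible (not $C^2$) domain. This is where the $H^2$-regularity built into $X_1(\Omega)$ is indispensable: it licenses the Sobolev embedding $H^2(\Omega)\hookrightarrow L^\infty(\Omega)$ that is \emph{not} available for the low-regularity factor $u^*\in X(\Omega)$, for which one is forced to fall back on the sharp $L^\infty$-bounds of Corollaries~\ref{PoissonR3} and~\ref{linfPoissonR2}. Once both families of $L^\infty$-bounds are combined into the single set of bilinear estimates above, the remainder of the argument is a line-for-line transcription of the homogeneous case.
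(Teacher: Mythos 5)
Your proposal is correct and takes essentially the same route as the paper, whose own proof simply sets $\Phi(v)=\alpha\varepsilon(v+u^*+\overline{g})\partial_t^2(v+u^*+\overline{g})+\alpha\varepsilon[\partial_t(v+u^*+\overline{g})]^2$ and reruns the Sukhinin scheme of Theorem~\ref{ThWPWestGlob} with the same isomorphism $L:X_0(\Omega)\rightarrow Y$. In fact you supply more detail than the published proof, which only asserts the Lipschitz bound $\Vert\Phi(w)-\Phi(z)\Vert_Y\leq\alpha B\varepsilon r\Vert w-z\Vert_{X(\Omega)}$ from ``the properties of $X(\Omega)$ and $X_1(\Omega)$'': you correctly pinpoint the mechanism for the $\overline{g}$-factors, namely $\partial_t^2\overline{g}\in Y$, $\partial_t\overline{g}\in L^2(\R^+;L^\infty(\Omega))$ and the embedding $H^2(\Omega)\hookrightarrow L^{\infty}(\Omega)$ valid on admissible domains by Theorem~\ref{thmsobolembadm} ($kp=4>n$), which replaces for $\overline{g}$ the $\mathcal{D}(-\Delta)$-based $L^\infty$ bounds of Corollaries~\ref{PoissonR3} and~\ref{linfPoissonR2} used for the $X(\Omega)$-factors.
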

\begin{proof}
As in the proof of Theorem \ref{ThWPWestGlob} we work with the Banach spaces 
$X_0(\Omega)$ 
and $Y=L^2(\R^+;L^2(\Omega))$. Then by Theorem~\ref{thmeqivcaudampwav}, the linear operator 
$$L:X_0(\Omega)\rightarrow Y,\quad  u\in X_0(\Omega)\mapsto\;L(u):=\del_t^2 u+c^2(-\Delta) u+\nu \varepsilon  \del_t (-\Delta) u\in Y$$
 is a bi-continuous isomorphism.
 
 Now we set $u^*\in X(\Omega)$, $\overline{g}\in X_1(\Omega)$ with $\Vert u^*\Vert_{X(\Omega)}\leq \frac{r}{2}$ and $\Vert \overline{g}\Vert_{X_1(\Omega)}\leq \frac{r}{2}$, and consider
 $$\Phi(v)=\alpha\varepsilon (v+u^*+\overline{g})\partial_t^2 (v+u^*+\overline{g})+\alpha \varepsilon [\partial_t (v+u^*+\overline{g})]^2.$$
 The properties of $X(\Omega)$ and $X_1(\Omega)$ allows to show for $w$ and $z$ in $X(\Omega)$ with $\Vert w\Vert_{X(\Omega)} \leq r$ and $\Vert z\Vert_{X(\Omega)} \leq r$ 
 \begin{align*}
\Vert \Phi(w)-\Phi(z)\Vert_Y\leq 
\alpha B \varepsilon r \Vert w-z\Vert_{X(\Omega)}
\end{align*}
with a constant $B>0$ depending only on $\Omega$. The final result follows as in the proof of Theorem~\ref{ThWPWestGlob} with the use of Theorem~\ref{thSuh}.
\end{proof}
Now we can give the result on the well-posedness of the Westervelt system (\ref{systWestnonhom}) on admissible domains:
\begin{theorem}
Let $u_0\in H^2(\Omega)$, $u_1\in H^1(\Omega)$, $g\in F$, space defined in (\ref{regtrspacedir}), and take
\\$f\in L^2([0,+\infty[;L^2(\Omega))$
with the compatibility conditions
$$g(0)=Tr_{\partial\Omega}u_0,\quad \partial_t g(0)=Tr_{\partial\Omega} u_1.$$ There exist $r^*>0$ and $C>0$ such that for $r\in [0,r^*[$
$$\Vert u_0\Vert_{H^2(\Omega)}+\Vert u_1\Vert_{H^1(\Omega)}+\Vert g\Vert_F+\Vert f\Vert_{L^2([0,+\infty[;L^2(\Omega))}\leq C r$$
implies that there exists a unique weak solution $u$ of the Westervelt system (\ref{systWestnonhom}) with non homogeneous Dirichlet boundary conditions is given by $u=v+u^*+\overline{g}$.
 Here $u^*\in X(\Omega)$ and $\overline{g}\in X_1(\Omega)$ (see (\ref{Hspace}) and (\ref{spacesolexttrace}) respectively for the definition of the spaces), such that $\tilde{u}=u^*+\overline{g}$ is the unique weak solution of the strongly damped problem (\ref{cauchydampwavdirnonhom}) and $v\in X(\Omega)$ is the unique solution of system (\ref{systWestbishom}). Moreover, the following estimates hold:
 $$\Vert u^*\Vert_{X(\Omega)}\leq r,\;\;\Vert \overline{g}\Vert_{X_1(\Omega)}\leq \frac{r}{2},\;\;\Vert v\Vert_{X(\Omega)}\leq \frac{r}{2}.$$
\end{theorem}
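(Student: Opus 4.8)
The plan is to reduce the nonlinear non-homogeneous problem to the two building blocks already established and to glue them through the additive decomposition $u=v+\tilde u$ with $\tilde u=u^*+\overline{g}$. Concretely, $\tilde u$ will absorb the source $f$, the boundary datum $g$ and the initial data via the linear strongly damped wave problem (Theorem~\ref{strondampwavedirnonhom}), while $v$ will solve the homogeneous nonlinear problem (\ref{systWestbishom}) supplied by Theorem~\ref{Globwelposvdir}. No new analytic estimate is required: the whole argument is a composition of these two results together with the bookkeeping of their constants.

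First I would invoke Theorem~\ref{strondampwavedirnonhom}. Since $u_0\in H^2(\Omega)$, $u_1\in H^1(\Omega)$, $g\in F$ (see~(\ref{regtrspacedir})) and $f\in L^2([0,+\infty[;L^2(\Omega))$ satisfy the compatibility conditions $g(0)=Tr_{\partial\Omega}u_0$ and $\partial_t g(0)=Tr_{\partial\Omega}u_1$, that theorem produces $\overline{g}\in X_1(\Omega)$ with $Tr_{\partial\Omega}\overline{g}=g$ and a unique $u^*\in X(\Omega)$ solving the linear system~(\ref{syststaru}), so that $\tilde u=u^*+\overline{g}$ is the unique weak solution of the strongly damped problem~(\ref{cauchydampwavdirnonhom}). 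The a priori bounds~(\ref{estgba}) and~(\ref{EqAPLinNh}) then give
$$\Vert \overline{g}\Vert_{X_1(\Omega)}\le C_g\Vert g\Vert_F,\qquad \Vert u^*\Vert_{X(\Omega)}\le C_*\big(\Vert f\Vert_{L^2([0,+\infty[;L^2(\Omega))}+\Vert u_0\Vert_{H^2(\Omega)}+\Vert u_1\Vert_{H^1(\Omega)}+\Vert g\Vert_F\big)$$
with constants $C_g,C_*>0$ depending only on $\Omega$. I would now fix the constant $C$ in the hypothesis and shrink $r^*$ so that the smallness assumption $\Vert u_0\Vert_{H^2(\Omega)}+\Vert u_1\Vert_{H^1(\Omega)}+\Vert g\Vert_F+\Vert f\Vert_{L^2([0,+\infty[;L^2(\Omega))}\le Cr$ forces $\Vert u^*\Vert_{X(\Omega)}$ and $\Vert \overline{g}\Vert_{X_1(\Omega)}$ each into a ball whose radius is the half-threshold demanded by Theorem~\ref{Globwelposvdir}; this is achieved by choosing $C$ smaller than a fixed multiple of $\min(1/C_*,1/C_g)$ and $r^*$ below the radius of validity of that theorem.

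With these bounds in force the hypotheses of Theorem~\ref{Globwelposvdir} are met, so it yields a unique $v\in X(\Omega)$ solving the homogeneous nonlinear problem~(\ref{systWestbishom}) with $\tilde u=u^*+\overline{g}$ and a norm controlled by the chosen radius. It then remains to verify that $u=v+u^*+\overline{g}$ solves~(\ref{systWestnonhom}): adding the linear equation satisfied by $\tilde u$ (which carries $f$) to equation~(\ref{systWestbishom}) for $v$, and observing that the nonlinear right-hand side $\alpha\varepsilon(v+\tilde u)\partial_t^2(v+\tilde u)+\alpha\varepsilon[\partial_t(v+\tilde u)]^2$ is exactly $\alpha\varepsilon\,u\,\partial_t^2 u+\alpha\varepsilon(\partial_t u)^2$, recovers the Westervelt equation; the boundary condition follows from $Tr_{\partial\Omega}v=0$ and $Tr_{\partial\Omega}\tilde u=g$, and the homogeneous initial data $v(0)=\partial_t v(0)=0$ combined with $\tilde u(0)=u_0$, $\partial_t\tilde u(0)=u_1$ give the prescribed initial conditions. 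Uniqueness of $u$ follows from uniqueness of the linear solution $\tilde u$ (hence of $u^*$ and $\overline{g}$) and the uniqueness of $v$ granted by Theorem~\ref{Globwelposvdir}.

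The genuinely delicate point is not analytic but organisational: the radius parameters appearing in Theorem~\ref{strondampwavedirnonhom}, in Theorem~\ref{Globwelposvdir} and in the present statement differ by fixed factors, so the smallness of the original data must be propagated \emph{simultaneously} through the two a priori estimates so that $u^*$ and $\overline{g}$ both land in balls small enough to feed the fixed-point argument of Theorem~\ref{Globwelposvdir} (which ultimately rests on Sukhinin's Theorem~\ref{thSuh}). Once these radii are matched and $C$, $r^*$ are relabelled accordingly, the announced estimates $\Vert u^*\Vert_{X(\Omega)}\le r$, $\Vert \overline{g}\Vert_{X_1(\Omega)}\le \frac{r}{2}$, $\Vert v\Vert_{X(\Omega)}\le \frac{r}{2}$ hold and the proof is complete.
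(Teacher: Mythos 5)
Your proposal is correct and follows essentially the same route as the paper: decompose $u=v+\tilde{u}$ with $\tilde{u}=u^*+\overline{g}$, use Theorem~\ref{strondampwavedirnonhom} together with the bounds~(\ref{estgba}) and~(\ref{EqAPLinNh}) to push $u^*$ and $\overline{g}$ into the balls required by Theorem~\ref{Globwelposvdir}, and then conclude via that fixed-point result; your extra verifications (summing the two equations, checking traces and initial data, and tracing uniqueness through the decomposition) are steps the paper leaves implicit. Your remark about matching the radii between the two theorems is apt, since the paper's own proof establishes $\Vert u^*\Vert_{X(\Omega)}\leq \frac{r}{2}$ and $\Vert v\Vert_{X(\Omega)}\leq r$ while the statement announces the factors the other way around, a discrepancy absorbed by relabelling $r$ exactly as you describe.
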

\begin{proof}
According to Theorem \ref{Globwelposvdir} we have $v\in X(\Omega)$ with $\Vert v\Vert_{X(\Omega)}\leq r$ solution of system (\ref{systWestbishom}) as soon as $\Vert u^*\Vert_{X(\Omega)}\leq \frac{r}{2}$ and $\Vert \overline{g}\Vert_{X_1(\Omega)}\leq \frac{r}{2}$ for $r\in [0,r^*[$ with $r^*>0$.
But according to Theorem \ref{strondampwavedirnonhom} if $\tilde{u}=u^*+\overline{g}$ is the unique weak solution of the strongly damped problem (\ref{cauchydampwavdirnonhom}), then we have the estimates
$$\Vert u^*\Vert_{X(\Omega)}\leq C (\Vert f\Vert_{[0,+\infty[;L^2(\Omega))}+\Vert  u_0\Vert_{H^2(\Omega)}+\Vert u_1\Vert_{H^1(\Omega)}+\Vert g\Vert_F)$$
and
$$\Vert \overline{g}\Vert_{X_1(\Omega)}\leq C \Vert g\Vert_F.$$
So there exists $C>0$ such that 
$$\Vert f\Vert_{L^2([0,+\infty[;L^2(\Omega))}+\Vert  u_0\Vert_{H^2(\Omega)}+\Vert u_1\Vert_{H^1(\Omega)}+\Vert g\Vert_F\leq C r$$
implies
$$\Vert u^*\Vert_{X(\Omega)}\leq \frac{r}{2},\hbox{ and }\Vert \overline{g}\Vert_{X_1(\Omega)}\leq \frac{r}{2}$$
which allows to conclude.
\end{proof}
\section{Asymptotic analysis for the Westervelt equation with homogeneous Dirichlet boundary condition.}\label{secasymptan}
\subsection{Approximated problems in $\mathbb{R}^n$ with $n=2$ or $3$.}\label{subsecasymptan}
Let $\Omega$ be an arbitrary bounded domain in $\mathbb{R}^3$ or in $\mathbb{R}^2$. 
\begin{definition}\label{convomegmR3}
Let $( \Omega_m)_{m \geq 0}$ be a sequence of open sets in $\mathbb{R}^3$. We say that $\Omega_m$ converges to $\Omega$, $\Omega_m\rightarrow \Omega$ if the following two conditions are satisfied
\begin{itemize}
\item For any compact $K\subset \Omega$ there is $m=m(K) \geq 0$ such that
$$ K\subset \Omega_m \hbox{ for all }m\geq m(K).$$
\item The sets $\Omega_m\setminus \Omega $ are bounded and when $m\rightarrow +\infty$
$$\lambda((\Omega\setminus \Omega_m)\cup (\Omega_m\setminus \Omega) )\rightarrow 0.$$
\end{itemize}
If $\Omega\subset\mathbb{R}^2$ we add the condition that for all $m\in \N^*$ domains $\Omega_m$ are  NTA domains characterized by constants $M$ and $r_0$ (see Definition~\ref{defNTA}).
\end{definition}
We consider the problem with homogeneous Dirichlet boundary conditions on an open set $\Omega$ associated to the Westervelt equation
\begin{equation}\label{Kuzeqrobin}
\left\lbrace
\begin{array}{l}
\partial^2_t u-c^2 \Delta u-\nu \varepsilon\Delta\partial_t u=\alpha\varepsilon \partial_t[u \partial_t u]+f \;\;on\;]0,+\infty[\times \Omega,\\
 u=0\;\;on\;]0,+\infty[\times \partial \Omega,\\
u(0)=u_0,\;\;\del_t u(0)=u_1 \;\;on\;\Omega
\end{array}\right.
\end{equation}
with the compatibility condition $$ u_0\vert_{\partial \Omega}=0\;\;\;\;\;\; u_1\vert_{\partial \Omega}=0.$$

Let $\Omega$ be an arbitrary bounded domain in $\mathbb{R}^3$ or  in $\R^2$ for which  there exists a sequence $( \Omega_m)_{m \geq 0}$ of arbitrary domains in $\R^3$ or of NTA domains,  uniformly characterized by geometrical constants $M$ and $r_0$, in $\mathbb{R}^2$ respectively, such that 
$\Omega_m\rightarrow \Omega$ in the sense of Definition~\ref{convomegmR3}. 
In addition we fix an open bounded set $\Omega^*$ such that $\Omega\subset \Omega^*$ and  $\Omega_m\subset\Omega^*$ for all $m\in \mathbb{N^*}$.

Thanks to Corollaries~\ref{PoissonR3} and~\ref{linfPoissonR2} as the domains $\Omega_m$ are uniformly bounded we can estimate the corresponding sequence of the weak solutions $u_m$ of the Poisson equation on $\Omega_m$
\begin{equation}\label{aprioestpoisprobpref}
 \Vert u_m\Vert_{H^1_0(\Omega_m)}\leq C\Vert f\Vert_{L^2(\Omega_m)} \quad \forall m\in \N^*
 \end{equation}
 with a constant $C>0$  independent of$m$.

For the case of the strongly damped wave equation on $\Omega_m$ %
as in Section~\ref{secwpdampwavdirhom} we can %
apply Theorems~\ref{ThExLinW} and~\ref{dampedwaveeqregint1} on $\Omega_m$. Nevertheless it is important to notice that  if we consider the associated estimates~(\ref{aprioriglobdampedwave})--(\ref{aprioriglobdampedwavebis}) on $\Omega_m$, the constants can be taken independent of$m$. It comes from the fact that the constant in the Poincar\'e's inequality depends only on the Lebesgue measure of the domain $\lambda(\Omega_m)$ which is bounded (by the volume of $\Omega^*$) and that there are no any other dependence on $\Omega_m$ which appears in the constants of the estimates~(\ref{aprioriglobdampedwave})--(\ref{aprioriglobdampedwavebis}). 

Moreover, for  the non linear Westervelt equation on $\Omega_m$ we have:

\begin{theorem}\label{ThWpWesprefcell}
Let $(\Omega_m)_{m\in \N^*}$ be the described previously sequence of domains, 
  $\nu>0$ and $\R^+=[0,+\infty[$. Considering the homogeneous Dirichlet boundary problem for the Westervelt equation on $\Omega_m$
\begin{equation}\label{CauchypbWesprefcell}
\left\lbrace
\begin{array}{l}
\partial^2_t u_{m}-c^2\Delta u_{m}-\nu \varepsilon \Delta \partial_t u_{m}=\alpha\varepsilon u_{m} \partial^2_t u_{m}+\alpha\varepsilon (\partial_t u_{m})^2+f\;\;\;in\;\;[0,+\infty[\times \Omega_m \\
u_{m}=0 \;\;\;on\;\;[0,+\infty[\times\partial\Omega_m, \\
u_{m}(0)=u_{0,m}, \hbox{ }\partial_t u_{m}(0)=u_{1,m},
\end{array}\right.
\end{equation} 
assume that $f\in L^2(\R^+;L^2(\Omega_m))$ and that the initial data
  $$u_{0,m}\in H^1_0(\Omega_m) \quad  \hbox{and}\quad u_{1,m}\in H^1_0(\Omega_m)$$
 with $\Delta u_{0,m} \in L^2(\Omega_m)$  in the weak sense of a solution of the Poisson equation. %
 Moreover,  
  let
  $C_1=O(1)$, independent of $m$ thanks to Theorem~\ref{dampedwaveeqregint1},  be the minimal constant such that the 
weak  solution $u_m^*\in X(\Omega_m)$ of the corresponding non homogeneous linear Cauchy problem (\ref{dampedwaveeqdirhom}) on $\Omega_m$ satisfies
  \begin{align*}  
  \Vert u_m^*\Vert_{X(\Omega_m)} \leq \frac{C_1}{\nu \eps}(\Vert f\Vert_{L^2(\R^+;L^2(\Omega_m))} +\Vert \Delta u_{0,m}\Vert_{L^2(\Omega_m)}+\Vert u_{1,m}\Vert_{H^1_0(\Omega_m)}).
  \end{align*}

 Then there exists $r_{*}>0$ independent of $m$ with $r_*=O(1)$ such that for all  $r\in[0,r_{*}[$
  and all 
  data  satisfying  
$$
\Vert f\Vert_{L^2(\R^+;L^2(\Omega_m))} +\Vert \Delta u_{0,m}\Vert_{L^2(\Omega_m)}+\Vert u_{1,m}\Vert_{H^1_0(\Omega_m)}\le \frac{\nu \eps}{C_1}r,
$$
there exists the unique weak solution $u_m\in X(\Omega_m)$ of the problem~(\ref{CauchypbWesprefcell}) for the Westervelt equation in same sense that in Theorem~\ref{ThWPWestGlob} and 
$$ \Vert u_m\Vert_{X(\Omega_m)}
   \leq 2r .$$
\end{theorem}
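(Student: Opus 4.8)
The plan is to reproduce, on each fixed domain $\Omega_m$, the exact argument used to prove Theorem~\ref{ThWPWestGlob}, the only genuinely new point being that every constant entering the Sukhinin scheme can be chosen independently of $m$. First I would fix $m$ and observe that the hypotheses $u_{0,m}\in H^1_0(\Omega_m)$ with $\Delta u_{0,m}\in L^2(\Omega_m)$ and $u_{1,m}\in H^1_0(\Omega_m)$ say precisely that $u_{0,m}\in\mathcal{D}(-\Delta)$ and $u_{1,m}\in H^1_0(\Omega_m)$, so Theorems~\ref{thmeqivcaudampwav} and~\ref{dampedwaveeqregint1} apply on $\Omega_m$ and yield the linear solution $u_m^*\in X(\Omega_m)$. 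I would then introduce the Banach spaces $X_0(\Omega_m)$ and $Y=L^2(\R^+;L^2(\Omega_m))$, the bi-continuous isomorphism $L(u)=\partial_t^2 u+c^2(-\Delta)u+\nu\varepsilon\partial_t(-\Delta)u$, and the nonlinear map $\Phi(v)=\alpha\varepsilon(v+u_m^*)\partial_t^2(v+u_m^*)+\alpha\varepsilon[\partial_t(v+u_m^*)]^2$. Writing $u_m=v+u_m^*$, the well-posedness reduces, exactly as before, to solving $L(v)-\Phi(v)=0$ via Theorem~\ref{thSuh}.

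The heart of the matter is the $m$-independence of the two bilinear constants $B_1$, $B_2$ controlling $\Vert a\,\partial_t^2 b\Vert_Y$ and $\Vert\partial_t a\,\partial_t b\Vert_Y$. These hinge on the pointwise bound $\Vert a\Vert_{L^\infty(\Omega_m)}\le C\Vert a\Vert_{\mathcal{D}(-\Delta)}$ furnished by Corollary~\ref{PoissonR3} in $\R^3$ and Corollary~\ref{linfPoissonR2} in $\R^2$. In $\R^3$ the constant in Corollary~\ref{PoissonR3} depends only on $\lambda(\Omega_m)\le\lambda(\Omega^*)$ and is therefore uniform; in $\R^2$ the constant in Corollary~\ref{linfPoissonR2} depends only on $M$, $r_0$ and $diam(\Omega_m)\le diam(\Omega^*)$, all uniform by the standing assumption that the $\Omega_m$ are NTA domains with the same $M$, $r_0$ contained in $\Omega^*$. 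Consequently $B=\max(B_1,B_2)$ is independent of $m$.

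Next I would track the linear constant $C_0=O(1)$ coming from $C_0\varepsilon\nu\Vert u\Vert_{X(\Omega_m)}^2\le\Vert f\Vert_Y\Vert u\Vert_{X(\Omega_m)}$: as noted in the paragraph preceding the statement, the constants in the a priori estimates~(\ref{aprioriglobdampedwave})--(\ref{aprioriglobdampedwavebis}) depend on $\Omega_m$ only through the Poincar\'e constant, which in turn depends only on $\lambda(\Omega_m)\le\lambda(\Omega^*)$; hence $C_0$, and likewise $C_1$, are $m$-independent. With $B$ and $C_0$ uniform, the Sukhinin radius $r_*=\nu/(8BC_0\alpha)$ of~(\ref{Eqret}) is itself independent of $m$, and the functions $\Theta(r)=8BC_\varepsilon\alpha\varepsilon r$ and $w(r)=r-4B(C_0/\nu)\alpha r^2$ are the same for every $m$. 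The verification that $0\in\Phi(0)+w(r)LU_{X_0(\Omega_m)}$ and the concluding bound $\Vert u_m\Vert_{X(\Omega_m)}\le 2r$ then go through verbatim.

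The main obstacle is exactly this uniformity of the $L^\infty$ bounds, which is why the two dimensions are handled under different hypotheses: in $\R^3$ the sharp estimate of Corollary~\ref{PoissonR3} needs only a bound on the Lebesgue measure, so arbitrary $\Omega_m\subset\Omega^*$ are admissible, whereas in $\R^2$ the continuity-up-to-the-boundary estimate of Corollary~\ref{linfPoissonR2} genuinely depends on the NTA geometry, forcing one to freeze the geometrical constants $M$ and $r_0$ along the sequence. Once these uniform constants are secured, no further work is required beyond invoking Theorems~\ref{thmeqivcaudampwav} and~\ref{thSuh} on each $\Omega_m$.
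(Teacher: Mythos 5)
Your proposal is correct and coincides with the paper's own (largely omitted) argument: the paper simply repeats the proof of Theorem~\ref{ThWPWestGlob} on each $\Omega_m$, noting that $r_*$ is $m$-independent because the constants in estimates~(\ref{aprioriglobdampedwave})--(\ref{aprioriglobdampedwavebis}) and~(\ref{EqCREch2}) on $\Omega_m$ are uniform. Your explicit tracking of $B_1$, $B_2$, $C_0$ via the Poincar\'e constant (controlled by $\lambda(\Omega^*)$) and the uniform NTA constants $M$, $r_0$ in $\R^2$ fills in exactly the details the paper leaves to the reader.
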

\begin{proof}
The proof is essentially the same as for Theorem~\ref{ThWPWestGlob} and is thus omitted. The independence on $m$ of $r^*$ comes from the independence on $m$ in the estimates ~(\ref{aprioriglobdampedwave})--(\ref{aprioriglobdampedwavebis}) and~(\ref{EqCREch2}) considered on $\Omega_m$.
\end{proof}
Note that we can apply Theorem~\ref{ThWPWestGlob} on $\Omega$ in $\R^3$ or if it is only a NTA domain in $\R^2$. In next subsection we extend the class of NTA domains to arbitrary domains which it is possible to approximate with a sequence of NTA domains with uniform geometrical constants.
\subsection{Mosco type convergence.}
In the assumptions introduced in the previous subsection we define 
\begin{align}
H(\Omega^*):=H^1([0,+\infty[;H^1_0(\Omega^*))\cap & H^2([0,+\infty[;L^2(\Omega^*)).\label{Moscospace}
\end{align}
Then for $u\in H(\Omega^*)$, $f\in L^2([0,+\infty[;L^2(\Omega^*))$ and $\phi\in L^2([0,+\infty[,H^1_0(\Omega^*))$ we introduce 
\begin{align}
F_m[u,\phi]:=&\int_0^{+\infty}\int_{\Omega_m} \partial_t^2 u  \phi+c^2 \nabla u \nabla \phi+\nu \varepsilon \nabla \partial_t u \nabla \phi\;d\lambda\;dt\nonumber\\
&\int_0^{+\infty}\int_{\Omega_m} -\alpha \varepsilon (u \partial_t^2 u) \phi -\alpha \varepsilon ( \partial_t u)^2 \phi+f\phi\;d\lambda\;dt\label{Fnu}
\end{align}
and also
\begin{align}
F[u,\phi]:=&\int_0^{+\infty}\int_{\Omega} \partial_t^2 u  \phi+c^2 \nabla u \nabla \phi+\nu \varepsilon \nabla \partial_t u \nabla \phi\;d\lambda\;dt\nonumber\\
&\int_0^{+\infty}\int_{\Omega} -\alpha \varepsilon (u \partial_t^2 u) \phi -\alpha \varepsilon ( \partial_t u)^2 \phi+f\phi\;d\lambda\;dt.\label{Fu}
\end{align}
We also define
 for $u\in L^2(\Omega^*)$
\begin{equation}\label{Fmbar}
\overline{F}_m[u,\phi]=\left\lbrace\begin{array}{ll}
F_m[u,\phi], & \hbox{ if } u\in H(\Omega^*),\\
+\infty, & otherwise
\end{array}\right.
\end{equation}
and
\begin{equation}\label{Fbar}
\overline{F}[u,\phi]=\left\lbrace\begin{array}{ll}
F[u,\phi], & \hbox{ if } u\in H(\Omega^*),\\
+\infty, & otherwise.
\end{array}\right.
\end{equation}
\begin{remark}\label{defweaksolutionKuz}
We  notice that $u$ is a weak solution of the Westervelt problem (\ref{Kuzeqrobin}) on $[0,+\infty[ \times \Omega $ in the sense of Theorem~\ref{ThWPWestGlob} if
\begin{itemize}
\item $u\in X(\Omega)$, space defined in (\ref{Hspace}).
\item For every $\phi \in L^2([0,+\infty[;H^1_0(\Omega))$
$F[u,\phi]=0,$
where $F$ defined in (\ref{Fu}).
\item $u(0)=u_0$ and $\del_t u(0)=u_1$ on $\Omega$.
\end{itemize}
\end{remark}
The expression $F[u,\phi]=0$ can be obtained multiplying the Westervelt equation from system (\ref{Kuzeqrobin}) by $\phi \in X(\Omega)$ integrating on $[0,+\infty[\times \Omega$ and doing integration by parts taking into account the boundary conditions.
In the same way with $F_m[u,\phi]$ defined in equation (\ref{Fnu}) we can define the weak solution of problem~(\ref{CauchypbWesprefcell}).

In order to state our main result, we also need to recall the notion of $M-convergence$ of functionals introduced in Ref.~\cite{Mosco}.
\begin{definition}
A sequence of functionals $G^m:H\rightarrow (-\infty,+\infty]$ is said to $M$-converge to a functional $G:H\rightarrow (-\infty,+\infty]$ in a Hilbert space $H$, if
\begin{enumerate}
\item(lim sup condition) For every $u\in H$ there exists $u_m$ converging strongly in $H$ such that
\begin{equation}
\overline{\lim} G^m[u_m]\leq G[u],\;\;\;\text{as}\;m\rightarrow +\infty.
\end{equation}
\item(lim inf condition) For every $v_m$ converging weakly to $u$ in $H$
\begin{equation}
\underline{\lim} G^m[v_m]\geq G[u],\;\;\;\text{as}\;m\rightarrow +\infty.
\end{equation}
\end{enumerate}
\end{definition}
The main result is the following theorem.
\begin{theorem}\label{Mconv}
For $\phi\in L^2([0,+\infty[;H^1_0(\Omega^*))$, the sequence of functionals $u\mapsto \overline{F}^m[u,\phi]$ defined in (\ref{Fmbar}), $M$-converges in $L^2([0,+\infty[;L^2(\Omega^*))$ to the following functional $u\mapsto \overline{F}[u,\phi]$ defined in (\ref{Fbar}) as $m\rightarrow +\infty$. More precisely in this case $\forall \phi\in L^2([0,+\infty[;H^1_0(\Omega^*))$ if $v_m\rightharpoonup u$ in $H(\Omega^*)$, the space defined in~(\ref{Moscospace}), then 
$$ F_m[v_m,\phi]\underset{m\rightarrow +\infty}{\longrightarrow}F[u,\phi].$$
\end{theorem}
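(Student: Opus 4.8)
The plan is to prove first the sharp ``more precisely'' statement---that $v_m\rightharpoonup u$ in $H(\Omega^*)$ forces $F_m[v_m,\phi]\to F[u,\phi]$---since both Mosco conditions follow from it. Throughout I would rewrite every integral over $\Omega_m$ (resp. $\Omega$) as an integral over the fixed set $\Omega^*$ against the characteristic function $\chi_{\Omega_m}$ (resp. $\chi_\Omega$), so that $F_m$ and $F$ live on the common space $L^2([0,+\infty[;L^2(\Omega^*))$. The weak convergence $v_m\rightharpoonup u$ in $H(\Omega^*)$ supplies a uniform bound $\|v_m\|_{H(\Omega^*)}\le C$ together with the separate weak convergences $\nabla v_m\rightharpoonup\nabla u$, $\nabla\partial_t v_m\rightharpoonup\nabla\partial_t u$ and $\partial_t^2 v_m\rightharpoonup\partial_t^2 u$ in $L^2([0,+\infty[;L^2(\Omega^*))$. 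The observation driving the change of domain is that, by Definition~\ref{convomegmR3}, $\lambda((\Omega\setminus\Omega_m)\cup(\Omega_m\setminus\Omega))\to 0$, hence $\chi_{\Omega_m}\to\chi_\Omega$ in every $L^r(\Omega^*)$ with $r<\infty$ and, by dominated convergence, $\chi_{\Omega_m}\psi\to\chi_\Omega\psi$ strongly in $L^2([0,+\infty[;L^2(\Omega^*))$ for any fixed $\psi$ in that space.

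For the linear and source terms I would simply pair weak convergence against this strong convergence. Writing $\int_0^{+\infty}\int_{\Omega_m}\nabla\partial_t v_m\nabla\phi\,d\lambda\,dt=\langle\nabla\partial_t v_m,\chi_{\Omega_m}\nabla\phi\rangle$, and likewise for the $c^2\nabla u\nabla\phi$ and $\partial_t^2 u\,\phi$ contributions, the test factor $\chi_{\Omega_m}\nabla\phi$ (resp. $\chi_{\Omega_m}\phi$) converges strongly while the derivative of $v_m$ converges weakly, so each pairing passes to the limit and reproduces the corresponding term of $F[u,\phi]$; the source term $\int_{\Omega_m}f\phi$ converges by dominated convergence since $f\phi\in L^1([0,+\infty[;L^1(\Omega^*))$. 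No compactness enters here.

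The heart of the proof---and the main obstacle---is the two nonlinear terms $\int_0^{+\infty}\int_{\Omega_m}(u\partial_t^2 u)\phi$ and $\int_0^{+\infty}\int_{\Omega_m}(\partial_t u)^2\phi$, for which weak convergence is insufficient. Here I would invoke the Aubin--Lions--Simon compactness theorem: since $\Omega^*$ is bounded the embedding $H^1_0(\Omega^*)\hookrightarrow L^2(\Omega^*)$ is compact, and $v_m$ is bounded in $L^2([0,T];H^1_0(\Omega^*))$ with $\partial_t v_m$ bounded in $L^2([0,T];L^2(\Omega^*))$, whence $v_m\to u$ strongly in $L^2([0,T];L^2(\Omega^*))$ for every $T$; the same theorem applied to $\partial_t v_m$ (bounded in $L^2([0,T];H^1_0)$, with $\partial_t^2 v_m$ bounded in $L^2([0,T];L^2)$) gives $\partial_t v_m\to\partial_t u$ strongly in $L^2([0,T];L^2(\Omega^*))$. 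Interpolating these strong limits against the uniform $L^6(\Omega^*)$-bounds in space (from $H^1_0\hookrightarrow L^6$, and $H^1([0,+\infty[;H^1_0)\hookrightarrow C_b([0,+\infty[;H^1_0)$) upgrades the convergence to $v_m\to u$ and $\partial_t v_m\to\partial_t u$ strongly in $L^2([0,T];L^q(\Omega^*))$ for every $q<6$ (any $q<\infty$ if $n=2$). Then, by Hölder with $\tfrac1q+\tfrac1{q'}=\tfrac12$ and $3<q<6$, together with dominated convergence in $t$ and a tail estimate over $[T,+\infty[$ controlled by $\int_T^{+\infty}\|\phi\|_{H^1_0}^2\,dt\to 0$, one obtains $\chi_{\Omega_m}v_m\phi\to\chi_\Omega u\phi$ and $\chi_{\Omega_m}\partial_t v_m\phi\to\chi_\Omega\partial_t u\phi$ strongly in $L^2([0,+\infty[;L^2(\Omega^*))$. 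Pairing these strong limits against $\partial_t^2 v_m\rightharpoonup\partial_t^2 u$ and $\partial_t v_m\rightharpoonup\partial_t u$ respectively yields the two nonlinear limits, while the leftover pieces supported on the symmetric difference $\Omega_m\triangle\Omega$ vanish because their integrands belong to $L^1([0,+\infty[\times\Omega^*)$ (by the same interpolation, e.g. $\partial_t u\in L^4([0,+\infty[;L^3)$ when $n=3$) and $\lambda(\Omega_m\triangle\Omega)\to 0$.

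Collecting the three groups of terms proves $F_m[v_m,\phi]\to F[u,\phi]$. The Mosco convergence of $\overline F_m[\cdot,\phi]$ to $\overline F[\cdot,\phi]$ in $L^2([0,+\infty[;L^2(\Omega^*))$ then follows: for the $\limsup$ (recovery) inequality I take the constant sequence $u_m\equiv u$, which converges strongly in $L^2([0,+\infty[;L^2(\Omega^*))$ and, being trivially weakly convergent in $H(\Omega^*)$ when $u\in H(\Omega^*)$, satisfies $\overline F_m[u,\phi]\to\overline F[u,\phi]$ by the statement just proved (the case $u\notin H(\Omega^*)$ is immediate as $\overline F[u,\phi]=+\infty$); for the $\liminf$ inequality one may assume the lower limit is finite, so along a subsequence $v_m\in H(\Omega^*)$ and the relevant uniformly bounded sequences fall under the same strong statement, turning the liminf into an equality. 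I expect the nonlinear terms---securing strong $L^2_{loc}(L^q)$ convergence of $v_m$ and $\partial_t v_m$ by Aubin--Lions and controlling the infinite time horizon---to be the only genuine difficulty, the linear part being a routine weak-strong pairing.
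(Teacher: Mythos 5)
Your proposal is correct and follows the same overall architecture as the paper's proof: rewrite all integrals over the fixed set $\Omega^*$ against characteristic functions, use $\lambda(\Omega_m\triangle\Omega)\to 0$ and dominated convergence for the domain-dependence, pair weak convergence of the derivatives of $v_m$ against strong convergence of the test factors for the linear and source terms, and upgrade to strong convergence of $v_m$ and $\partial_t v_m$ in $L^2(L^q)$, $q<6$, to pass to the limit in the two quadratic terms (the paper's "lim sup" step is likewise the constant recovery sequence $v_m\equiv u$, just as you take it). The one genuine difference lies in how the compactness is justified, and here your version is actually the more careful one: the paper directly asserts the compact embeddings $L^2([0,+\infty[;H^1(\Omega^*))\subset\subset L^2([0,+\infty[;L^p(\Omega^*))$ and, for the term $v_m\phi$, $H^1([0,+\infty[;H^1(\Omega))\subset\subset L^{\infty}([0,+\infty[;H^1(\Omega))$, neither of which holds as literally stated --- the first lacks any time regularity in the stated spaces, and both fail over the unbounded interval $[0,+\infty[$ without tail control. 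What makes the paper's conclusion nonetheless true is exactly what you spell out: the sequence $v_m$ carries the extra time regularity from $H(\Omega^*)$, so Aubin--Lions--Simon gives strong convergence on every finite $[0,T]$, and the tail over $[T,+\infty[$ is controlled uniformly in $m$ by $\int_T^{+\infty}\Vert\phi\Vert_{H^1_0}^2\,dt\to 0$ together with the uniform bounds on $v_m$. So your proposal does not merely reproduce the paper's argument; it repairs its weakest step, at the modest cost of the explicit finite-interval-plus-tail bookkeeping, whereas the paper buys brevity by invoking embedding statements that need this correction to be read as intended.
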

\begin{proof}
We consider $\phi\in L^2([0,+\infty[;H^1_0(\Omega^*))$.
Given the definition of $\overline{F}$ and $\overline{F}^m$, we only consider the case where these functions take finite value.

\textit{Proof of "lim sup" condition.}
Without loss of generality, let us take directly a fixed $u\in H(\Omega^*)$ and define $v_m=u$ for all $m$. Hence $(v_m)_{m\in \N^*}$ is strongly converging sequence in $L^2([0,+\infty[;L^2(\Omega^*))$. Thus by the definition of functionals $\overline{F_m}[u,\phi]$ and $\overline{F}[u,\phi]$, they are equal respectively to 
$F_m[u,\phi]$ and $F[u,\phi]$, which are well defined (and hence are finite).

As $\Omega_m\rightarrow \Omega$ in the sense of Definition~\ref{convomegmR3}
 and $u \in H(\Omega^*)$ defined in Eq.~(\ref{Moscospace}),   to pass to the limit in~(\ref{Fnu}) we can directly apply the   
 dominated convergence theorem  for $m\rightarrow +\infty$ 
\begin{multline}
\int_0^{+\infty}\int_{\Omega_m} \partial_t^2 u  \phi+c^2 \nabla u \nabla \phi+\nu \varepsilon \nabla \partial_t u \nabla \phi\;d\lambda\;dt\\
\rightarrow \int_0^{+\infty}\int_{\Omega} \partial_t^2 u  \phi+c^2 \nabla u \nabla \phi+\nu \varepsilon \nabla \partial_t u \nabla \phi\;d\lambda\;dt,\label{limsupconv1}
\end{multline}
\begin{multline}
\int_0^{+\infty}\int_{\Omega_m} -\alpha \varepsilon (u \partial_t^2 u) \phi -\alpha \varepsilon ( \partial_t u)^2 \phi\;d\lambda\;dt\\
\rightarrow \int_0^{+\infty}\int_{\Omega} -\alpha \varepsilon (u \partial_t^2 u) \phi -\alpha \varepsilon ( \partial_t u)^2 \phi\;d\lambda\;dt.\label{limsupconv1bis}
\end{multline}
This comes from the fact that for $u\in H(\Omega^*)$ and $\phi\in L^2([0,+\infty[;H^1_0(\Omega^*))$ by H\"older's inequality we ensure the boundness
\begin{align*}
\int_0^{+\infty}\int_{\Omega^*}  \vert \partial_t^2 u  \phi\vert\;d\lambda\;dt
\leq & \Vert \partial_t^2 u \Vert_{L^2([0,+\infty[;L^2(\Omega^*))} \Vert \phi\Vert_{L^2([0,+\infty[;L^2(\Omega^*))}<+\infty,\\
\int_0^{+\infty}\int_{\Omega^*}  \vert c^2 \nabla u \nabla \phi\vert\;d\lambda\;dt
\leq & c^2 \Vert \nabla u \Vert_{L^2([0,+\infty[;L^2(\Omega^*))} \Vert\nabla \phi\Vert_{L^2([0,+\infty[;L^2(\Omega^*))}<+\infty,\\
\int_0^{+\infty}\int_{\Omega^*}  \vert \nu \varepsilon \nabla \partial_t u \nabla \phi\vert\;d\lambda\;dt
\leq & \nu \Vert \nabla \partial_t u \Vert_{L^2([0,+\infty[;L^2(\Omega^*))} \Vert \nabla\phi\Vert_{L^2([0,+\infty[;L^2(\Omega^*))}<+\infty,
\end{align*}
and, using also successively H\"older's inequality and the Sobolev embeddings, we also control the nonlinear terms
\begin{align*}
\int_0^{+\infty}\int_{\Omega^*} \vert (u & \partial_t^2 u) \phi\vert  \;d\lambda\;dt\\
\leq & \Vert u\Vert_{L^{\infty}([0,+\infty[;L^4(\Omega^*))}\Vert \partial^2_t u\Vert_{L^2([0,+\infty[;L^2(\Omega^*))} \Vert \phi\Vert_{L^2([0,+\infty[;L^4(\Omega^*))}\\
\leq & C \Vert u\Vert_{H^1([0,+\infty[;H^1_0(\Omega^*))}\Vert \partial^2_t u\Vert_{L^2([0,+\infty[;L^2(\Omega^*))} \Vert \phi\Vert_{L^2([0,+\infty[;H^1_0(\Omega^*))}<+\infty
\end{align*}
and 
\begin{align*}
\int_0^{+\infty}\int_{\Omega^*} \vert & ( \partial_t u)^2  \phi\vert\;d\lambda\;dt \\
\leq & \Vert \partial_t u\Vert_{L^{\infty}([0,+\infty[;L^2(\Omega^*))} \Vert \partial_t u\Vert_{L^{2}([0,+\infty[;L^4(\Omega^*))} \Vert \phi\Vert_{L^2([0,+\infty[;L^4(\Omega^*))}\\
\leq & C \Vert \partial_t u\Vert_{H^1([0,+\infty[;L^2(\Omega^*))} \Vert \partial_t u\Vert_{L^{2}([0,+\infty[;H^1_0(\Omega^*))} \Vert \phi\Vert_{L^2([0,+\infty[;H^1_0(\Omega^*))}<+\infty.
\end{align*}
Note that by the dominated convergence theorem we have for $f\in L^2([0,+\infty[;L^2(\Omega^*))$
$$\int_0^{+\infty}\int_{\Omega_m}f\phi \;d\lambda\;dt\underset{m\rightarrow +\infty}{\longrightarrow} \int_0^{+\infty}\int_{\Omega}f\phi \;d\lambda\;dt. $$
With the help of~(\ref{limsupconv1}) and~(\ref{limsupconv1bis}) we  conclude that for all $\phi\in L^2([0,+\infty[,H^1_0(\Omega^*))$
$$F_m[u,\phi]\underset{m\rightarrow +\infty}{\longrightarrow} F[u,\phi],$$
from where follows the "lim sup" condition.

\textit{Proof of the "lim inf" condition.}
Now, let $v_m$ be a sequence such that
$$v_m \rightharpoonup u\;\;\;\text{in}\;H(\Omega^*),$$
with $H(\Omega^*)$ defined in~(\ref{Moscospace}) and
$$H(\Omega^*)\hookrightarrow H^1([0,+\infty[;H^1_0(\Omega^*)).$$
Then we have
\begin{equation}\label{convdistr0}
\partial_t^2 v_m \rightharpoonup \partial_t^2 u\;\;\;\text{in}\; L^2([0,+\infty[;L^2(\Omega^*)),
\end{equation}
\begin{equation}\label{convdistr1}
\partial_t v_m \rightharpoonup \partial_t u,\; \nabla\partial_t v_m \rightharpoonup \nabla\partial_t u \;\;\;\text{in}\; L^2([0,+\infty[;L^2(\Omega^*)),
\end{equation}
and
\begin{equation}\label{convdistr2}
v_m\rightharpoonup u,\hbox{ } \nabla v_m \rightharpoonup \nabla u \;\;\;\text{in}\;L^2([0,+\infty[;L^2(\Omega^*)).
\end{equation}
Moreover, working in $\R^n$ with dimension $n\leq 3$, by Theorem~\ref{thmsobolembadm} it is possible to chose any  $2 \leq p<6$ ensuring the compactness of the embedding $L^2([0,+\infty[;H^1(\Omega^*))\subset\subset L^2([0,+\infty[;L^p(\Omega^*))$. For higher dimension the desired assertion with $p\ge 2$ fails. So for $2 \leq p<6$
\begin{equation}\label{convdistr4}
 v_m\rightarrow u,\hbox{ }\partial_t v_m \rightarrow \partial_t u \hbox{ in } L^2([0,+\infty[;L^p(\Omega^*)).
\end{equation}
Let $\phi\in L^2([0,\infty[,H^1_0(\Omega))$, we want to show that 
$$F_m[v_m,\phi] \underset{m\rightarrow+\infty}{\longrightarrow} F[u,\phi].$$
We start to study the convergence of linear terms 
\begin{multline*}
\Big\vert \int_0^{+\infty}\int_{\Omega_m} \partial_t^2 v_m  \phi \;d\lambda\;ds- \int_0^{+\infty}\int_{\Omega} \partial_t u \partial_t \phi \;d\lambda\;ds\Big\vert \leq\\
\Big\vert \int_0^{+\infty}\int_{\Omega^*} \partial_t^2 v_m \mathbbm{1}_{\Omega_m}  \phi\;d\lambda\;ds-\int_0^{+\infty} \int_{\Omega^*} \partial_t^2 v_m \mathbbm{1}_{\Omega}\phi\;d\lambda\;ds\Big\vert\\
+\Big\vert \int_0^{+\infty}\int_{\Omega^*} \partial_t^2 v_m \mathbbm{1}_{\Omega}  \phi\;d\lambda\;ds-\int_0^{+\infty} \int_{\Omega^*} \partial_t^2 u \mathbbm{1}_{\Omega} \phi\;d\lambda\;ds\Big\vert.
\end{multline*}
The second term on the right hand side tends to zero as $m\rightarrow+\infty$ by (\ref{convdistr0}) as $\mathbbm{1}_{\Omega}\phi\in L^2([0,+\infty[;L^2(\Omega^*))$.
For the first term
$$\Big\vert \int_0^{+\infty}\int_{\Omega^*} \partial_t^2 v_m (\mathbbm{1}_{\Omega_m}-\mathbbm{1}_{\Omega}) \phi\;d\lambda\;ds\Big\vert\leq \Vert (\mathbbm{1}_{\Omega_m}-\mathbbm{1}_{\Omega})\phi\Vert_{L^2([0,+\infty[\times\Omega^*) }\Vert \partial_t^2 v_m\Vert_{L^2([0,+\infty[\times\Omega^*)},$$
but $\Vert \partial_t v_m\Vert_{L^2([0,T]\times\Omega^*)}$ is bounded according to~(\ref{convdistr0}), and hence by the dominated convergence theorem we find
$$\Vert (\mathbbm{1}_{\Omega_m}-\mathbbm{1}_{\Omega})\phi\Vert_{L^2([0,+\infty[\times\Omega^*) }\underset{m\rightarrow+\infty}{\longrightarrow} 0.$$
Then for $m\rightarrow+\infty$
$$\int_0^{+\infty}\int_{\Omega_m} \partial_t^2 v_m  \phi \;d\lambda\;ds\rightarrow \int_0^{+\infty}\int_{\Omega} \partial_t^2 u \partial_ \phi \;d\lambda\;ds.$$
 Using~(\ref{convdistr1}) and~(\ref{convdistr2}) we can deduce in the same way that
\begin{multline}
\int_0^{+\infty}\int_{\Omega_m} \partial_t^2 v_m  \phi+c^2 \nabla v_m \nabla \phi+\nu \varepsilon \nabla\partial_t v_m \nabla \phi\; d\lambda\;dt\\
\underset{m\rightarrow+\infty}{\longrightarrow} \int_0^{+\infty}\int_{\Omega} \partial_t^2 u  \phi+c^2 \nabla u \nabla \phi+\nu \varepsilon \nabla \partial_t u \nabla \phi \;d\lambda\;dt.\label{liminfconv1}
\end{multline}
For the quadratic terms we have
\begin{multline}
\left\vert \int_0^{+\infty}\int_{\Omega_m} (v_m \partial_t^2 v_m)  \phi d\lambda dt-\int_0^{+\infty}\int_{\Omega} (u \partial_t^2 u)  \phi d\lambda dt\right\vert \leq \\
\left\vert \int_0^{+\infty}\int_{\Omega_m} (v_m \partial_t^2 v_m) \phi d\lambda dt - \int_0^{+\infty}\int_{\Omega} (v_m \partial_t^2 v_m) \phi d\lambda dt\right\vert \\
+\left \vert\int_0^{+\infty}\int_{\Omega} ( v_m \partial_t^2 v_m)  \phi d\lambda dt -\int_0^{+\infty}\int_{\Omega} (u \partial_t^2 u) \phi d\lambda dt\right\vert.\label{liminfconvquadstep1}
\end{multline}
To show that the first term on the right hand side tends to $0$ for $m\rightarrow +\infty$, we firstly notice  that by H\"older's inequality
\begin{align*}
\left\vert \int_0^{+\infty}\right. &\int_{\Omega_m} (v_m \partial_t^2 v_m) \phi d\lambda dt -\left. \int_0^{+\infty}\int_{\Omega} (v_m \partial_t^2 v_m) \phi d\lambda dt\right\vert\\
\leq & \Vert (\mathbbm{1}_{\Omega_m}-\mathbbm{1}_{\Omega})\phi\Vert_{L^2([0,+\infty[;L^4(\Omega^*)) } \Vert v_m\Vert_{L^{\infty}([0,+\infty[;L^4(\Omega^*))}\Vert \partial_t^2 v_m \Vert_{L^2([0,+\infty[;L^2(\Omega^*))}.
\end{align*}
Then using the Sobolev embeddings we have the existence of a constant $K>0$ such that
\begin{align*}
\Vert v_m\Vert_{L^{\infty}([0,+\infty[;L^4(\Omega^*))} & \Vert \partial_t^2 v_m \Vert_{L^2([0,+\infty[;L^2(\Omega^*))}\\
\leq &  C \Vert v_m\Vert_{H^1([0,+\infty[;H^1_0(\Omega^*))}\Vert \partial_t^2 v_m \Vert_{L^2([0,+\infty[;L^2(\Omega^*))} \Vert\leq K,
\end{align*}
as $(v_m)_{m\in \N^*}$ is weakly convergent in $H(\Omega^*)$. Moreover, by the dominated convergence theorem combining with the Sobolev embeddings  
$$\phi\in L^2([0,+\infty[;H^1_0(\Omega^*))\hookrightarrow L^2([0,+\infty[;L^4(\Omega^*)),$$ 
and hence
$$\Vert (\mathbbm{1}_{\Omega_m}-\mathbbm{1}_{\Omega})\phi\Vert_{L^2([0,+\infty[;L^4(\Omega^*)) }\underset{m\rightarrow+\infty}{\longrightarrow} 0.$$
Therefore
\begin{equation}\label{liminfconvquadstep2}
\left\vert \int_0^{+\infty}\int_{\Omega_m} (v_m \partial_t^2 v_m) \phi d\lambda dt- \int_0^{+\infty}\int_{\Omega} (v_m \partial_t^2 v_m) \phi d\lambda dt\right\vert \underset{m\rightarrow+\infty}{\longrightarrow} 0.
\end{equation}
Now we consider 
$$\left \vert\int_0^{+\infty}\int_{\Omega} ( v_m \partial_t^2 v_m)  \phi d\lambda dt -\int_0^{+\infty}\int_{\Omega} (u \partial_t^2 u) \phi d\lambda dt\right\vert.$$
Noticing that
\begin{align*}
\Vert v_m \phi - u \phi\Vert_{L^2([0,+\infty[;L^2(\Omega))}^2=& \int_{0}^{+\infty}\Vert (v_m-u)\phi\Vert_{L^2(\Omega)}^2ds,\\
\intertext{thanks to the Young inequality it can be estimated as}
\Vert v_m \phi - u \phi\Vert_{L^2([0,+\infty[;L^2(\Omega))}^2 \leq & \int_{0}^{+\infty} \Vert v_m -u\Vert_{L^3(\Omega)}^2 \Vert \phi \Vert_{L^6(\Omega)}^2ds,
\intertext{and with the help of the Sobolev embeddings it becomes}
\Vert v_m \phi - u \phi\Vert_{L^2([0,+\infty[;L^2(\Omega))}^2  \leq & K \int_{0}^{+\infty} \Vert v_m -u\Vert_{H^1(\Omega)}^2 \Vert \phi \Vert_{H^1(\Omega)}^2ds\\
\leq & K \Vert v_m -u\Vert_{L^{\infty}([0,+\infty[; H^1(\Omega)}^2 \Vert \phi\Vert_{L^2([0,+\infty[;L^2(\Omega))}^2.
\end{align*}
But in the same time
$v_m\rightharpoonup u$ in $H^1([0,+\infty[;H^1(\Omega))\subset\subset L^{\infty}([0,+\infty[;H^1(\Omega))$, what implies the strong convergence $v_m\rightarrow u$ in $L^{\infty}([0,+\infty[;H^1(\Omega))$.
Then 
$$ v_m\phi \rightarrow u\phi \hbox{ in }L^2([0,+\infty[;L^2(\Omega)).$$
Combining this convergence result with~(\ref{convdistr0}) we obtain 
\begin{equation}\label{liminfconvquadstep3}
\left \vert\int_0^{+\infty}\int_{\Omega} ( v_m \partial_t^2 v_m)  \phi d\lambda dt -\int_0^{+\infty}\int_{\Omega} (u \partial_t^2 u) \phi d\lambda dt\right\vert\underset{m\rightarrow+\infty}{\longrightarrow} 0.
\end{equation}
Consequently, Eqs.~(\ref{liminfconvquadstep1}),~(\ref{liminfconvquadstep2}) and~(\ref{liminfconvquadstep3}) allow to conclude that
\begin{equation}\label{liminfconvquad1}
\left\vert \int_0^{+\infty}\int_{\Omega_m} (v_m \partial_t^2 v_m)  \phi d\lambda dt-\int_0^{+\infty}\int_{\Omega} (u \partial_t^2 u)  \phi d\lambda dt\right\vert \underset{m\rightarrow+\infty}{\longrightarrow} 0.
\end{equation}
Next let us consider
\begin{multline}
\left\vert \int_0^{+\infty}\int_{\Omega_m} ( \partial_t v_m)^2  \phi d\lambda dt-\int_0^{+\infty}\int_{\Omega} ( \partial_t u)^2  \phi d\lambda dt\right\vert \leq \\
\left \vert \int_0^{+\infty}\int_{\Omega_m} ( \partial_t v_m)^2  \phi d\lambda dt- \int_0^{+\infty}\int_{\Omega} ( \partial_t v_m)^2  \phi d\lambda dt\right\vert\\
+\left\vert \int_0^{+\infty}\int_{\Omega} ( \partial_t v_m)^2  \phi d\lambda dt -\int_0^{+\infty}\int_{\Omega} ( \partial_t u)^2  \phi d\lambda dt\right\vert\label{liminfconvquad2step1}
\end{multline}
The first term goes to $0$ for $m\to +\infty$ by the same reason as in the proof of~(\ref{liminfconvquadstep2}), moreover we have:
\begin{align*}
\left\vert \int_0^{+\infty}\int_{\Omega} ( \partial_t v_m)^2  \phi d\lambda dt -\int_0^{+\infty}\int_{\Omega} ( \partial_t u)^2  \phi d\lambda dt\right\vert=& \left\vert \int_0^{+\infty}\int_{\Omega} (\partial_t v_m-\partial_t u)(\partial_t v_m+\partial_t u)\phi d\lambda dt\right\vert.
\end{align*}
By the Young inequality
\begin{align*}
\left\vert \int_0^{+\infty}\int_{\Omega}( ( \partial_t v_m)^2-( \partial_t u)^2 )  \phi d\lambda dt \right\vert\leq & \int_0^{+\infty} \Vert \partial_t v_m-\partial_t u\Vert_{L^3(\Omega)}\Vert \partial_t v_m+\partial_t u\Vert_{L^2(\Omega)}\Vert \phi\Vert_{L^6(\Omega)}dt
\end{align*}
and by the Sobolev embeddings and the Cauchy-Schwarz inequality
\begin{align*}
\left\vert \int_0^{+\infty}\int_{\Omega}( ( \partial_t v_m)^2-( \partial_t u)^2 )  \phi d\lambda dt \right\vert\leq  \Vert \partial_t v_m+&\partial_t u\Vert_{L^{\infty}([0,+\infty[;L^2(\Omega))}\\
 & \Vert\partial_t v_m-\partial_t u\Vert_{L^{2}([0,+\infty[;L^3(\Omega))}\Vert \phi\Vert_{L^2([0,+\infty[;H^1(\Omega))}.
\end{align*}
Thanks to~(\ref{convdistr4}), $\Vert\partial_t v_m-\partial_t u\Vert_{L^{2}([0,+\infty[;L^3(\Omega))}\underset{m\rightarrow+\infty}{\longrightarrow} 0$. As in addition
$$\partial_t v_m \rightharpoonup \partial_t u \hbox{ in } H^1([0,+\infty[;L^2(\Omega))\hookrightarrow L^{\infty}([0,+\infty[;L^2(\Omega)),$$ the norm
$\Vert \partial_t v_m+\partial_t u\Vert_{L^{\infty}([0,+\infty[;L^2(\Omega))}$ is bounded, and thus 
$$\left\vert \int_0^{+\infty}\int_{\Omega}( ( \partial_t v_m)^2-( \partial_t u)^2 )  \phi d\lambda dt \right\vert\underset{m\rightarrow+\infty}{\longrightarrow} 0.$$
Coming back to~(\ref{liminfconvquad2step1}), we finally obtain
\begin{equation}\label{liminfconvquad2}
\left\vert \int_0^{+\infty}\int_{\Omega_m} ( \partial_t v_m)^2  \phi d\lambda dt-\int_0^{+\infty}\int_{\Omega} ( \partial_t u)^2  \phi d\lambda dt\right\vert \underset{m\rightarrow+\infty}{\longrightarrow} 0.
\end{equation}

So, from~(\ref{liminfconv1}),~(\ref{liminfconvquad1}) and~(\ref{liminfconvquad2})  we deduce that for all $\phi\in L^2([0,+\infty[;H^1_0(\Omega^*))$ the functionals $F_m[v_m,\phi]\rightarrow F[u,\phi]$
for $m\rightarrow+\infty$, which finishes the proof.
\end{proof}

Finally we generalize the well-posedness result in $\R^2$ and obtain  an approximation result of the solution of the Westervelt equation on $\Omega$ by the solutions of the Westervelt equation on $\Omega_m$.

\begin{theorem}\label{thmconvR3}
Let $\Omega$ an open bounded domain approximated by a sequence of open domains $(\Omega_m)_{m\in \N^*}$ such that $\Omega_m\rightarrow \Omega$ in the sense of Definition~\ref{convomegmR3}, 
$i.e.$ $\Omega_m$ are arbitrary in $\R^3$ and are NTA-domains uniformly characterized by constants $M$ and $r_0$ in $\R^2$. Let in addition
$\Omega^*$ be an open domain, such that for all $m\in \N^*$ $\Omega_m\subset\Omega^*$ with $\Omega\subset \Omega^*$, and $f\in L^2([0,+\infty[;L^2(\Omega^*))$ whose  restrictions on $\Omega$ and $\Omega_m$ are denoted again by $f$.
Let $u_0\in H^1_0(\Omega)$, $u_1\in H^1_0(\Omega)$, $\Delta u_0\in L^2(\Omega) $ in the sense  of the weak Poisson problem~(\ref{EqWeekDirPois}). Assume the existence of two sequences $u_{0,m}\in H^1_0(\Omega_m)$ and $u_{1,m}\in H^1_0(\Omega_m)$ with $ \Delta u_{0,m}\in L^2(\Omega_m)$ (also in the sense of~(\ref{EqWeekDirPois})) such that  their extensions on $\mathbb{R}^n$ 
by $0$ satisfy
\begin{align*}
E_{\mathbb{R}^n}\Delta u_{0,m}\underset{m\rightarrow+\infty}{\longrightarrow} E_{\mathbb{R}^n} \Delta u_0 \hbox{ in }L^2(\mathbb{R}^n), \\
E_{\mathbb{R}^n} u_{1,m} \underset{m\rightarrow+\infty}{\longrightarrow} E_{\mathbb{R}^n} u_1\hbox{ in }H^1_0(\mathbb{R}^n).
\end{align*}
Let us denote by $u_m\in X(\Omega_m)$ the  weak solutions  of the problem~(\ref{CauchypbWesprefcell}) on $\Omega_m$ associated to the initial conditions $u_{0,m}$ and $u_{1,m}$ in the sense of Theorem~\ref{ThWpWesprefcell}, which are also the weak  solutions in the sense of Remark~\ref{defweaksolutionKuz}.

Then there exists an unique weak solution $u\in H(\Omega)$  of the Westervelt problem~(\ref{Kuzeqrobin}) on $\Omega$ in the sense of the weak formulation~(\ref{weakformWes}). In addition, %
for  $r_{*}>0$  with $r_*=O(1)$ and $C_1>0$,  both existing by Theorem~\ref{ThWpWesprefcell} for the sequence $(u_m)_{m\in \N^*}$, there exists a constant $C>0$, depending on the volume of $\Omega$, such that for all  $r\in[0,r_{*}[$ 
$$\Vert f\Vert_{L^2(\R^+;L^2(\Omega))} +\Vert \Delta u_{0}\Vert_{L^2(\Omega)}+\Vert u_{1}\Vert_{H^1_0(\Omega)}\le \frac{\nu \eps}{C_1}r \; \Rightarrow \;  \Vert u\Vert_{H(\Omega)}\leq 2Cr. $$
Moreover, there is a subsequence of
the extensions of $u_m$ by $0$ converging weakly to the extension by $0$ of~$u$:
$$(E_{\mathbb{R}^n}u_{m_k})\vert_{\Omega^*}\rightharpoonup (E_{\mathbb{R}^n}u)\vert_{\Omega^*} \hbox{ in } H(\Omega^*).$$
If $\Omega\subset \R^3$ or a NTA domain in $\R^2$, then the solution $u\in X(\Omega)$ and satisfies Theorem~\ref{ThWPWestGlob}. 
\end{theorem}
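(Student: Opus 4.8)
The plan is to realize $u$ as a weak limit in $H(\Omega^*)$ of the solutions $u_m$ extended by zero, and to identify it as the weak solution on $\Omega$ through the Mosco convergence of Theorem~\ref{Mconv}. First I would transfer the smallness hypothesis from $\Omega$ to the $\Omega_m$: since the extensions of the data converge, one has $\Vert \Delta u_{0,m}\Vert_{L^2(\Omega_m)}=\Vert E_{\mathbb{R}^n}\Delta u_{0,m}\Vert_{L^2(\mathbb{R}^n)}\to\Vert\Delta u_0\Vert_{L^2(\Omega)}$, $\Vert u_{1,m}\Vert_{H^1_0(\Omega_m)}\to\Vert u_1\Vert_{H^1_0(\Omega)}$, and $\Vert f\Vert_{L^2(\Omega_m)}\to\Vert f\Vert_{L^2(\Omega)}$ by dominated convergence (because $\lambda((\Omega\setminus\Omega_m)\cup(\Omega_m\setminus\Omega))\to 0$). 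Hence, if the data on $\Omega$ satisfy $\Vert f\Vert+\Vert\Delta u_0\Vert+\Vert u_1\Vert\le\frac{\nu\eps}{C_1}r$ with $r<r_*$, then for $m$ large the data on $\Omega_m$ satisfy the same bound, and Theorem~\ref{ThWpWesprefcell} gives $\Vert u_m\Vert_{X(\Omega_m)}\le 2r$ uniformly in $m$. Extending by zero and restricting to $\Omega^*$, the Poincaré-type inequality $\Vert\nabla w\Vert_{L^2}\le C\Vert\Delta w\Vert_{L^2}$ on $\mathcal{D}(-\Delta)$ (with $C$ controlled by the volume) yields a uniform bound $\Vert E_{\mathbb{R}^n}u_m\Vert_{H(\Omega^*)}\le 2Cr$, so a subsequence satisfies $E_{\mathbb{R}^n}u_{m_k}\rightharpoonup u$ in $H(\Omega^*)$, and weak lower semicontinuity gives $\Vert u\Vert_{H(\Omega^*)}\le 2Cr$.

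Next I would show that the limit is supported in $\Omega$. Since $E_{\mathbb{R}^n}u_m$ vanishes outside $\Omega_m$ and is bounded in $L^2([0,+\infty[;L^p(\Omega^*))$ for some $p>2$ (Sobolev embedding of $H^1_0$ via Theorem~\ref{thmsobolembadm}), Hölder's inequality together with $\lambda(\Omega_m\setminus\Omega)\to 0$ gives $\mathbbm{1}_{\Omega^*\setminus\Omega}\,E_{\mathbb{R}^n}u_m\to 0$ strongly in $L^2([0,+\infty[;L^2(\Omega^*))$; comparing with the weak limit forces $u=0$ a.e.\ on $\Omega^*\setminus\Omega$. Consequently $u(t)\in H^1_0(\Omega)$ for a.e.\ $t$ and $u\in H(\Omega)$ with $\Vert u\Vert_{H(\Omega)}\le 2Cr$. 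The initial conditions follow because the evaluations $u\mapsto u(0)\in H^1_0$ and $u\mapsto\partial_t u(0)\in L^2$ are weakly continuous on $H(\Omega^*)$; combining this with the strong convergences $E_{\mathbb{R}^n}u_{0,m_k}\to E_{\mathbb{R}^n}u_0$ and $E_{\mathbb{R}^n}u_{1,m_k}\to E_{\mathbb{R}^n}u_1$ and the uniqueness of weak limits yields $u(0)=u_0$ and $\partial_t u(0)=u_1$ on $\Omega$.

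The core step is the passage to the limit in the nonlinear weak formulation, and here the Mosco convergence does the work. I would fix $\phi\in C_c^\infty([0,+\infty[\times\Omega)$: for each $t$ one has $\operatorname{supp}\phi(t)\subset K\subset\subset\Omega$, so $K\subset\Omega_m$ for $m$ large by Definition~\ref{convomegmR3}, hence $\phi(t)\in H^1_0(\Omega_m)$ and, since $E_{\mathbb{R}^n}u_{m_k}=u_{m_k}$ on $\Omega_{m_k}$ while $\phi$ is supported in $\Omega_{m_k}$, the weak formulation of $u_{m_k}$ (Remark~\ref{defweaksolutionKuz} applied on $\Omega_m$) gives $F_{m_k}[E_{\mathbb{R}^n}u_{m_k},\phi]=0$. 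On the other hand Theorem~\ref{Mconv}, applied along the subsequence, gives $F_{m_k}[E_{\mathbb{R}^n}u_{m_k},\phi]\to F[u,\phi]$, whence $F[u,\phi]=0$. By density of $C_c^\infty([0,+\infty[\times\Omega)$ in $L^2([0,+\infty[;H^1_0(\Omega))$ and the continuity of $\phi\mapsto F[u,\phi]$ (the boundedness estimates established in the ``lim sup'' part of Theorem~\ref{Mconv}), the identity $F[u,\phi]=0$ extends to all admissible test functions, so $u$ solves~(\ref{weakformWes}). When $\Omega\subset\mathbb{R}^3$ or $\Omega$ is NTA in $\mathbb{R}^2$, Theorem~\ref{ThWPWestGlob} already furnishes a unique solution in $X(\Omega)\subset H(\Omega)$; the limit must coincide with it, the whole sequence converges, and $u\in X(\Omega)$.

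The main obstacle is \emph{uniqueness} in the genuinely new case of an arbitrary planar $\Omega$, where Theorem~\ref{ThWPWestGlob} is unavailable because the $L^\infty$ estimate of Corollary~\ref{linfPoissonR2} may fail. Existence is delivered by the limit above; to establish uniqueness in $H(\Omega)$ I would take two solutions $u,u'$ with $\Vert u\Vert_{H(\Omega)},\Vert u'\Vert_{H(\Omega)}\le 2Cr$, set $w=u-u'$, and exploit the time-divergence form $u\partial_t^2 u+(\partial_t u)^2=\partial_t(u\partial_t u)$ of the nonlinearity. Testing the equation for $w$ with $\partial_t w$ and using the two-dimensional embeddings $H^1(\Omega)\hookrightarrow L^q(\Omega)$ for every $q<\infty$, the quadratic terms can be estimated so that the dissipative contribution $\nu\eps\Vert\nabla\partial_t w\Vert_{L^2}^2$ is absorbed thanks to the smallness of $r$, closing a Grönwall inequality. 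The delicate point is the term $\int_\Omega u\,\partial_t^2 w\,\partial_t w$, which is not dominated by the natural energy and cannot be handled by an $L^\infty$ bound on general domains; here I would use that weak lower semicontinuity of $\Vert\Delta\,\cdot\,\Vert_{L^2(V)}$ on every $V\subset\subset\Omega$ transmits the uniform bound $\Vert u_m\Vert_{X(\Omega_m)}\le 2r$ to the interior, giving $\Delta u\in L^2(\Omega)$ and enough additional regularity to render the energy argument self-consistent, thereby yielding $u=u'$.
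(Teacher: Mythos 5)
Your existence-and-convergence argument is essentially the paper's own proof of Theorem~\ref{thmconvR3}: the uniform bound $\Vert u_m\Vert_{X(\Omega_m)}\le 2r$ from Theorem~\ref{ThWpWesprefcell} (with $r_*$ and $C_1$ independent of $m$), extension by zero, weak compactness in $H(\Omega^*)$, identification of the support of the limit in $\Omega$, the $M$-convergence of Theorem~\ref{Mconv} applied to test functions compactly supported in $\Omega$ (whose supports lie in $\Omega_m$ for large $m$ by Definition~\ref{convomegmR3}), and a final density argument. Your direction of transferring the smallness hypothesis (from $\Omega$ to the $\Omega_m$ via the strong convergence of the data) is the reverse of the paper's, which passes the bounds to the limit, but it matches the displayed implication of the statement and is harmless; your strong $L^2$ argument for $u=0$ a.e.\ on $\Omega^*\setminus\Omega$ is a more careful version of the paper's one-line claim. (Both you and the paper take for granted that $u=0$ a.e.\ outside $\Omega$ yields $u(t)\in H^1_0(\Omega)$ for an arbitrary open set; this shared point is not held against you.)

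The genuine divergence, and the gap, is your last paragraph on uniqueness. The paper does not attempt an energy-based uniqueness proof on an arbitrary planar domain: it asserts uniqueness only ``by the unicity of the weak limit,'' i.e.\ uniqueness of the constructed limit object, and only in the cases $\Omega\subset\R^3$ or $\Omega$ NTA in $\R^2$ does it identify $u$ with the unique solution furnished by Theorem~\ref{ThWPWestGlob}. Your Gr\"onwall scheme cannot be closed as sketched: absorbing the critical term $\alpha\eps\int_\Omega u\,\partial_t^2 w\,\partial_t w\,\dx$ (and, at the $X(\Omega)$ energy level, terms of the form $\Vert a\,\partial_t^2 b\Vert_{L^2(L^2)}\le\Vert a\Vert_{L^\infty}\Vert\partial_t^2 b\Vert_{L^2(L^2)}$, exactly as in the proof of Theorem~\ref{ThWPWestGlob}) requires the estimate~(\ref{EqCREch2}), i.e.\ $\Vert u\Vert_{L^\infty(\Omega)}\le C\Vert\Delta u\Vert_{L^2(\Omega)}$. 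In $\R^3$ this is Theorem~\ref{thmXie} (valid on arbitrary domains), but in $\R^2$ it is Corollary~\ref{linfPoissonR2}, whose constant depends on the NTA parameters $M,r_0$; its absence on a general planar domain is precisely why Theorem~\ref{ThWPWestGlob} does not apply there and why the paper restricts itself to limit-uniqueness. Your proposed fix does not restore it: obtaining $\Delta u\in L^2(\Omega)$ by weak lower semicontinuity on $V\subset\subset\Omega$ is correct, but in two dimensions the implication ``$u\in H^1_0(\Omega)$, $\Delta u\in L^2(\Omega)$ $\Rightarrow$ $u\in L^\infty(\Omega)$'' is a boundary-regularity statement, not an interior one; interior $H^2_{loc}$ bounds give only $L^\infty_{loc}$ with constants degenerating near $\partial\Omega$. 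So the claimed ``enough additional regularity to render the energy argument self-consistent'' is exactly the missing ingredient, and as written this step would fail on the arbitrary planar domains that constitute the new content of the theorem.
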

\begin{proof}
By definition of $\Omega^*$ we have $H(\Omega_m)\hookrightarrow H(\Omega^*)$ and $H(\Omega)\hookrightarrow H(\Omega^*)$ (see Eq.~(\ref{Moscospace})).
By the definition of $u_m$ 
in Theorem~\ref{ThWpWesprefcell} 
we have as a direct consequence that $u_m\in X(\Omega_m)$ 
is a weak solution in the sense of Remark~\ref{defweaksolutionKuz}. Therefore, for all $\phi\in L^2([0,+\infty[;H^1_0(\Omega_m))$ %
$F_m[u_m,\phi]=0. $ %
Extending by $0$ we obtain
$$\Vert (E_{\mathbb{R}^n}u_m)\vert_{\Omega^*}\Vert_{H(\Omega_m)}\leq \Vert u_m\Vert_{H(\Omega_m)}.$$

But, thanks to the independence on $m$ of $r^*$ in Theorem~\ref{ThWpWesprefcell}, if $r<r^*$ and
$$
\Vert f\Vert_{L^2(\R^+;L^2(\Omega_m))} +\Vert \Delta u_{0,m}\Vert_{L^2(\Omega_m)}+\Vert u_{1,m}\Vert_{H^1_0(\Omega_m)}\le \frac{\nu \eps}{C_1}r
$$
with $C_1>0$ also independent of$m$, then it holds
$$C \Vert u_m\Vert_{H(\Omega_m)} \leq \Vert u_m\Vert_{X(\Omega_m)}\leq 2r$$
with  a constant $C>0$ depending only on $\lambda(\Omega_m) $.

Therefore, as the sequence $((E_{\mathbb{R}^n}u_m)\vert_{\Omega^*})_{m\in \N^*}$ is bounded, there exits $u^*$ in $ H(\Omega^*)$ and a subsequence, still denoted by $(E_{\mathbb{R}^n}u_m)\vert_{\Omega^*}$, such that
$$(E_{\mathbb{R}^n}u_m)\vert_{\Omega^*}\rightharpoonup u^*\quad \hbox{in}\quad H(\Omega^*).$$
Set $\phi\in L^2([0,+\infty[;\mathcal{D}(\Omega))$,
then $\phi\in L^2([0,+\infty[;H^1_0(\Omega))$, and from a certain rank $M$ for all $m\geq M$ $\operatorname{supp}(\phi)\subset(\Omega_m)$, which implies that $\phi\in L^2([0,+\infty[;H^1_0(\Omega_m))$ for $m\geq M$ as $\Omega_m\rightarrow\Omega$ in the sense of Definition~\ref{convomegmR3}.
Therefore, by Theorem~\ref{Mconv}  for $m\geq M$ we have
$$0=F_m[(E_{\mathbb{R}^n}u_m)\vert_{\Omega^*},\phi]\rightarrow F[u^*,\phi].$$
Consequently, for all $\phi\in L^2([0,+\infty[;\mathcal{D}(\Omega))$ 
$$F[u^*,\phi]=0$$
  which by the density argument holds also for all $\phi\in L^2([0,+\infty[;H^1_0(\Omega))$. By definition of $u_m$ we also have $u^*(0)=u_0$, $\Delta u^*(0)=\Delta u_0$ in $L^2(\Omega)$ and $\partial_t u^*(0)=u_1$ in $H^1_0(\Omega)$. 
Moreover, 
$$(E_{\mathbb{R}^n}u_m)\vert_{\Omega^*}\underset{m\rightarrow +\infty}{\rightharpoonup} u^*\hbox{ in } H(\Omega^*).$$

But $(E_{\mathbb{R}^n}u_m)\vert_{\Omega^*\setminus \Omega_m}=0$ and $\Omega_m \underset{m\rightarrow +\infty}{\rightarrow}\Omega$, so
$u^*\vert_{\Omega^*\setminus \Omega}=0.$ 
Consequently we obtain
$$u^*\vert_{\Omega}\in H(\Omega) $$
and thus it is the weak solution of the Westervelt problem~(\ref{Kuzeqrobin}) on $\Omega$ satisfying~(\ref{weakformWes}). It is unique by the unicity of the weak limit.
In addition, by the assumptions of strong convergence of the initial data and their uniform boudness, we obtain by passing to the limit the same boudness for $u_0$, $u_1$ and $f$ on $\Omega$. Finally, as $E_{\mathbb{R}^n} u^*\vert_{\Omega^*}$ is the weak limit of $E_{\mathbb{R}^n} u_m\vert_{\Omega^*}$ in $H(\Omega^*)$, we directly have using~\eqref{EqCREch2}, holding for arbitrary domains in $\R^3$ and for NTA domains in $\R^2$ with uniform on $m$ constants, that there exists a constant $C>0$ (depending on the volume of $\Omega$) such that for all $r\in [0,r^*[$
$$\|E_{\mathbb{R}^n} u^*\vert_{\Omega^*}\|_{H(\Omega^*)}\le  C\underline{\lim}\Vert E_{\mathbb{R}^n} u_m\vert_{\Omega^*}\Vert_{X(\Omega_m)}\leq 2Cr.$$
If $\Omega\subset \R^3$ or it is a NTA domain in $\R^2$ then by Theorem~\ref{ThWPWestGlob} there exists a unique weak solution $u\in X(\Omega)$ satisfying ~(\ref{weakformWes}) (see also Remark~\ref{defweaksolutionKuz}).
Thus, by the unicity, $u=u^*\vert_{\Omega}\in X(\Omega)$. If $\Omega\subset \R^2$ which is not a NTA domain a priori we cannot ensure that $u^*\in X(\Omega)$.
\end{proof}
\appendix
\section{Proof of Theorem ~\ref{preqthmNys} }\label{append1}

We note that by an approximation argument we just have to prove the theorem for $f\geq 0$ and $f\in \mathcal{D}(\Omega)$. Let $R_0$ be an initial cube of center $x_0\in \partial\Omega$ with an edge measure $l(R_0)=\delta$ and $R_1$ be a cube associated to $R_0$ of same center and orientation than $R_0$ with $l(R_1)=l(R_0)/(5C_0(M,n))$ with $C_0(M,n)>>1$ as in the Section~9 of Ref.~\cite{Nystromthese}. We take $R_2$ as a cube of same center and of the orientation as $R_1$ with side length $l(R_2)=l(R_1)/K_0$, where $K_0=K_0(M,n)$
is such that if $v(x)$ is an harmonic function on $R_1\cap\Omega$ vanishing continuously on $\partial\Omega\cap R_1$, then
\begin{equation}\label{eqproofNys1}
\int_{R_2\cap\Omega}\left\vert \frac{v(x)}{d(x,\partial\Omega)}\right\vert^q dx\leq K(M,n,q) \lambda(R_2)\left\vert \frac{v(y_0)}{d(y_0,\partial\Omega)}\right\vert^q
\end{equation}
for some $y_0\in R_2\cap \Omega$, $d(y_0,\partial\Omega)\geq K_1(M,n) l(R_2)$. This can always be arranged by Theorem 6.1 in Ref.~\cite{Nystromthese}. We construct $(Q_k)_{1\leq k\leq N}$ a covering of $\partial\Omega$ with cubes of same orientation and edge side than $R_2$ centered at $x_k\in \partial\Omega$ and such that $Q_k$ is related to a cube $Q_k^*$ in the same way that $R_2$ is related to $R_1$. Note that 
$$\sum_{k=1}^N \lambda(Q_k)\leq 2 \lambda(\Omega),$$
which implies $N\leq K_3(\delta,M,n,diam(\Omega)).$

Let $\Omega_0:=\Omega\setminus \cup_{i=1}^N Q_k$. We may assume that $d(\Omega_0,\partial\Omega)\geq K_1(M,n) l(R_2)$. Then
\begin{equation}\label{eqproofNys2}
\int_{\Omega}\left\vert\frac{Gf(x)}{d(x,\partial\Omega)}\right\vert^q dx \leq \int_{\Omega_0} \left\vert\frac{Gf(x)}{d(x,\partial\Omega)}\right\vert^q dx+ \sum_{k=1}^N \int_{Q_k\cap\Omega} \left\vert\frac{Gf(x)}{d(x,\partial\Omega)}\right\vert^q dx.
\end{equation}
We denote  by $G(x,y)$ the Green function associated to $\Omega$.  
According to Lemma~2.1 in Ref.~\cite{Widman} if $\lambda(\Omega)\leq 1$ then 
$$0 \leq G(x,y)\leq \frac{C(n)}{\vert x-y\vert^{n-1}}, $$
by dilatation we can extend this result for all $\Omega$ bounded simply connected
\begin{equation}\label{estgreenfunc}
0 \leq G(x,y)\leq \frac{C(n,\lambda(\Omega))}{\vert x-y\vert^{n-1}}. 
\end{equation}
Then as $f\geq 0$,
\begin{align*}
\vert Gf(x)\vert=& \int_{\Omega}G(x,y)f(y)\;dy\\
\leq & \int_{\Omega} C(n,\lambda(\Omega)) \frac{f(y)}{\vert x-y\vert^{n-1}}\;dy\\
\leq & C(n,\lambda(\Omega)) \gamma(1) I_1f(x)
\end{align*}
where we have taken for $\alpha=1$ the Riesz potential defined for $0<\alpha<n$
\begin{equation}\label{defRieszpot}
I_{\alpha}f(x)=\frac{1}{\gamma(\alpha)}\int_{\mathbb{R}^n} \frac{f(y)}{\vert x-y\vert^{n-\alpha}}\;dy.
\end{equation}
We  also use the fact according to Ref.~\cite{Stein} that for $0<\alpha<n$, $1<p\leq q <+\infty$, $\frac{1}{q}=\frac{1}{p}-\frac{\alpha}{n}$ then
\begin{equation}\label{apRieszpot}
\Vert I_{\alpha}f\Vert_{L^q}\leq A_{p,q}\Vert f\Vert_{L^p}.
\end{equation}
We obtain as $d(\Omega_0,\partial\Omega)\geq K_1(M,n) l(R_2)$
\begin{align*}
\int_{\Omega_0} \left\vert\frac{Gf(x)}{d(x,\partial\Omega)}\right\vert^q dx \leq & \frac{1}{K_1(M,n) l(R_2)}\int_{\Omega_0} \vert Gf(x) \vert^q \;dx\\
\leq & \frac{ (C(n,\lambda(\Omega)) \gamma(1))^q}{K_1(M,n) l(R_2)} \int_{\Omega_0} I_1f(x)^q\; dx\\
\leq & C(M,n,q,\delta, diam(\Omega))\Vert I_1 f\Vert_{L^q}^q\\
\leq & C(M,n,q,\delta, diam(\Omega)) \Vert f\Vert_{L^p}^q
\end{align*}
with $\frac{1}{q}=\frac{1}{p}-\frac{1}{n}$.

Now let us consider the second term in Eq.~(\ref{eqproofNys2}). We fix $k$ and take $\varphi\in \mathcal{D}(Q_k^*)$ with
\\$0\leq \varphi\leq 1 $ such that $\varphi(x)=1$ on $(1-\varepsilon) Q_k^*$ for some small $\varepsilon$. We put 
\\$g_k(x)=(1-\varphi(x))f(x)$ and $h_k(x)= \varphi(x) f(x)$. Then
\begin{equation}\label{eqproofNys3}
\int_{Q_k\cap\Omega} \left\vert\frac{Gf(x)}{d(x,\partial\Omega)}\right\vert^q dx \leq C_q \left( \int_{Q_k\cap\Omega} \left\vert\frac{G h_k(x)}{d(x,\partial\Omega)}\right\vert^q dx +\int_{Q_k\cap\Omega} \left\vert\frac{G g_k(x)}{d(x,\partial\Omega)}\right\vert^q dx\right).
\end{equation}
We call the integrals on the right hand side $I_1$ and $I_2$. Using Lemma~10.1 in Refs.~\cite{Nystromthese} or~\cite{Nystrom} as $\operatorname{supp} \;h_k\subset Q_k^*$, we have 
$$C=C(M,n,q,r_0,\delta,\operatorname{dimloc}(\partial\Omega),diam(\Omega))$$
such that
$$I_1\leq C \Vert h_k\Vert_{L^p}^q \leq C \Vert f\Vert_{L^p}^q.$$
Using~(\ref{eqproofNys1}) on $G g_k$ considering it is  harmonic on $(1-\varepsilon) Q_k^*$ we have $y_k\in \Omega\cap Q_k$ such that, $d(y_k,\partial\Omega)\geq K_1(M,n) l(R_2)$ and 
$$I_2\leq K(M,n,q) \lambda(R_2)\left\vert \frac{G g_k(y_k)}{d(y_k,\partial\Omega)}\right\vert^q\leq C(M,n,q,\delta) \vert G g_k(y_k)\vert^q.$$
But as $g_k\geq 0$
\begin{align*}
\vert G g_k(y_k)\vert= & \int_{\Omega} G(y_k,y) g_k(y)\;dy \\
\leq & C(n,diam(\Omega)) \int_{\Omega} \frac{g_k(y)}{\vert y_k-y\vert^{n-1}}\;dy
\end{align*}
according to~(\ref{estgreenfunc}). But $supp\;g_k\subset Q_k^*\setminus(1-\varepsilon)Q_k^*$ and if $y \in Q_k^*\setminus(1-\varepsilon)Q_k^*$ then 
$d(y_k,y)\geq \frac{1}{2} (1-\varepsilon)l(Q_k^*)-l(Q_k)$ a non negative constant depending only on $\delta,M,n$ given the relation between $Q_k$ and $Q_k^*$ and their definition.
As a result 
$$\int_{\Omega} \frac{g_k(y)}{\vert y_k-y\vert^{n-1}}\;dy\leq C(M,n,\delta) \int_{\Omega} g_k(y)\;dy\leq C(M,n,\delta) \Vert g_k\Vert_{L^1}\leq C(M,n,\delta) \Vert f\Vert_{L^1} $$
and 
$$I_2\leq C(m,n,q,\delta,diam(\Omega))\Vert f\Vert_{L^1}^q.$$
Considering again~(\ref{eqproofNys3}) we have
$$
\int_{Q_k\cap\Omega} \left\vert\frac{Gf(x)}{d(x,\partial\Omega)}\right\vert^q dx \leq C(M,n,q,r_0,\delta,\operatorname{dimloc}(\partial\Omega),diam(\Omega)) \Vert f\Vert_{L^p}^q,$$
which allows to conclude.

\def\refname{References}
\bibliographystyle{unsrt}
\bibliography{ref}

\end{document}